\pgfplotsset{every axis/.append style={
                    axis x line=middle,    
                    axis y line=middle,    
                    axis line style={-,color=blue}, 
                    xlabel={$x$},          
                    ylabel={$y$},          
            }}
\DeclareMathOperator{\Sing}{Sing}
\DeclareMathOperator{\Mat}{Mat}
\DeclareMathOperator{\Spec}{Spec}
\DeclareMathOperator{\corank}{corank}
\DeclareMathOperator{\rank}{rank}
\DeclareMathOperator{\Deck}{Deck}
\DeclareMathOperator{\Ann}{Ann}
\DeclareMathOperator{\lk}{lk}
\DeclareMathOperator{\QP}{\textsc{qp}}
\def\A{\mathbb{A}}
\def\ZZ{\mathbb{Z}}
\def\TT{\mathbb{T}}
\def\CC{\mathbb{C}}
\def\FF{\mathbb{F}}
\def\PP{\mathbb{P}}
\def\cC{\mathcal{C}}
\def\G{\mathcal G_{\textsc{qp}}}
\def\uno{\bar{\mathbf{1}}}
\def\rightmap#1{\smash{\mathop{\rightarrow}\limits^{#1}}}
\def\injmap#1{\smash{\mathop{\hookrightarrow}\limits^{#1}}}
\def\surj{\mathbin{\rightarrow\mkern-6mu\rightarrow}}
\newcommand\enet[1]{\renewcommand\theenumi{#1}
\renewcommand\labelenumi{\theenumi}}
\newcommand\eneti[1]{\renewcommand\theenumii{#1}
\renewcommand\labelenumii{\theenumii}}
\newcommand\abrir{\makeatletter\renewcommand{\p@enumii}{}\makeatother}
\newcommand\cerrar{\makeatletter\renewcommand{\p@enumii}{\theenumi}\makeatother}
\def\es@roman#1{\protect{\romannumeral#1}}%
\newtheorem{thm}{Theorem}[section]  
\newtheorem{main-thm}{Theorem}  
\newtheorem{prop}[thm]{Proposition}
\newtheorem{main-conj}{Conjecture}%
\newtheorem{cor}[thm]{Corollary}
\newtheorem{lem}[thm]{Lemma}
\theoremstyle{remark}
\newtheorem{obs}[thm]{Remark}
\newtheorem{notation}[thm]{Notation}
\newtheorem{dfn}[thm]{Definition}
\newtheorem{exam}[thm]{Example}
\title{Quasi-projectivity of even Artin groups}
\author[R. Blasco]{Rub\'en Blasco-Garc{\'\i}a}
\address{Departamento de Matem\'aticas, IUMA\\ 
Universidad de Zaragoza\\ 
C.~Pedro Cerbuna 12\\ 
50009 Zaragoza, Spain} 
\email{rubenb@unizar.es}
\author[J.I. Cogolludo]{Jos{\'e} Ignacio Cogolludo-Agust{\'i}n}
\address{Departamento de Matem\'aticas, IUMA\\ 
Universidad de Zaragoza\\ 
C.~Pedro Cerbuna 12\\ 
50009 Zaragoza, Spain} 
\email{jicogo@unizar.es} 
\begin{document}

\thanks{The first author is partially supported by 
the Departamento de Industria e Innovación del Gobierno de Aragón and Fondo Social Europeo
PhD grant and the Spanish Government MTM2015-67781-P(MINECO/FEDER).
The second author is partially supported by 
the Spanish Government MTM2013-45710-C2-1-P and 
\emph{E15 Grupo Consolidado Geometr\'{\i}a} from the Gobierno de Aragón.} 

\subjclass[2010]{Primary: 20F36, 14F45; Secondary: 14H30, 57M05, 32S50} 

\begin{abstract}
Even Artin groups generalize right-angled Artin groups by allowing the labels in the defining 
graph to be even. In this paper a complete characterization of quasi-projective even Artin groups 
is given in terms of their defining graphs. Also, it is shown that quasi-projective even Artin 
groups are realizable by $K(\pi,1)$ quasi-projective spaces.
\end{abstract}

\maketitle

\section*{Introduction}
\label{sec-intro}
As suggested in \cite{catanese-fibred} a group is referred to as being quasi-projective 
(resp. quasi-K\"ahler) if it is the fundamental group of a smooth, connected, quasi-projective 
(resp. quasi-K\"ahler) space, that is, the complement of a hypersurface in a projective 
(resp. K\"ahler) variety.
The question of classification of quasi-projective groups, which today is referred to as Serre's
question, has been frequently alluded to since Zariski~\cite{Zariski-problem} and 
Van Kampen~\cite{VanKampen-fundamental} proposed it for complements of curves in the projective 
plane. The search for properties of such groups goes back to Enriques~\cite{Enriques-sulla} and
O.Zariski~\cite[Chapter VIII]{Zariski-algebraic}. This has developed in the search for obstructions 
for a group to be quasi-projective (resp. quasi-K\"ahler) starting with 
Morgan~\cite{Morgan-algebraic-topology}, Kapovich-Millson~\cite{Kapovich-Millson},
Arapura~\cite{Arapura-geometry,Arapura-fundamentalgroup}, Libgober~\cite{Libgober-groups}, 
Dimca~\cite{Di3}, 
Dimca-Papadima-Suciu~\cite{Dimca-Papadima-Suciu-jump-loci}, 
and Artal-Matei and the second author~\cite{ACM-artin,ACM-quasi-projective}.

In this paper we concentrate in the possible characterization of quasi-projective Artin groups, as 
stated in~\cite[p.~451]{Dimca-Papadima-Suciu-jump-loci}.
Any proof of such results requires the use of obstructions to disregard the negative cases as 
well as the constructive part of finding realizations for the positive cases.

A first approach to this problem is given in~\cite[Thm. 11.7]{Dimca-Papadima-Suciu-jump-loci} 
where quasi-projective right-angled Artin groups are characterized by complete multipartite 
graphs corresponding to direct products of free groups.
In the more general case of even Artin groups, that is, Artin groups associated with 
even-labeled graphs, the label plays an important role
and not all multipartite graphs produce quasi-projective Artin groups.

In order to describe such graphs we define the concept of \emph{$\QP$-irreducible graph}. 
In this context, \emph{graph} means for us simple graph.
Let us denote by $\G$ the family of labeled graphs whose associated Artin groups are 
quasi-projective. Given two labeled graphs $\Gamma_1=(V_1,E_1,m_1)$, 
$\Gamma_2=(V_2,E_2,m_2)$, we define their 2-join $\Gamma_1*_2 \Gamma_2=(V,E,m)$ as the 
labeled graph given by the join of $\Gamma_1$ and $\Gamma_2$ whose connecting 
edges have all label~2, that is
$$
m(e)=
\begin{cases}
m_i(e) & \text{ if } e\in E_i\\
2 & \text{ if } e\in E\setminus (E_1\cup E_2).
\end{cases}
$$
We say $\Gamma\in \G$ is a $\QP$-irreducible graph if $\Gamma$ is not a 2-join of two graphs in~$\G$.

Denote by $\overline{K}_r$ a disjoint graph with $r$ vertices and no edges. 
Also denote by $S_{m}$ the graph given by two vertices joined by an edge with label $m$. 
Finally, denote by $T(4,4,2)$ the triangle as shown in Figure~\ref{fig-simple-blocks}. 
It will be shown that these are the only $\QP$-irreducible even graphs. 
In other words, the main result of this paper is the following.

\begin{main-thm}\label{thm-qp}
Let $\Gamma=(V,E,2\ell)$ be an even-labeled graph and $\A_\Gamma$ its associated even Artin group. 
Then the following are equivalent:
\begin{enumerate}
 \item $\A_\Gamma$ is quasi-projective, that is, $\Gamma\in \G$.
 \item $\Gamma$ is the 2-join of finitely many copies of $\overline{K}_r$, $S_{2\ell}$, and $T$.
\end{enumerate}
Moreover, if $\Gamma\in \G$, then $\A_\Gamma=\pi_1(X)$ where $X=\PP^2\setminus \cC$ is a 
curve complement.
\end{main-thm}

\begin{center}
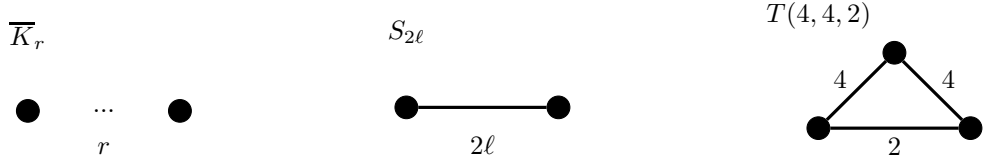
\begin{figure}
\begin{tabular}{ccccc}
\begin{tikzpicture}[vertice/.style={draw,circle,fill,minimum size=0.3cm,inner sep=0}]
\node[vertice] at (1,1) {};
\node[vertice] at (-1,1) {};
\node at (0,1) {$...$};
\node at (0,.5) {$r$};
\node at (-1,2) {$\overline{K}_r$};
\end{tikzpicture}
& \quad \quad  \quad \quad \quad \quad&
\begin{tikzpicture}[vertice/.style={draw,circle,fill,minimum size=0.3cm,inner sep=0}]
\node[vertice] (D1) at (1,1) {};
\node[vertice] (F1) at (-1,1) {};
\draw[very thick] (D1)--(F1);
\node at (0,.5) {$2\ell$};
\node at (-1,2) {$S_{2\ell}$};
\end{tikzpicture}
& \quad \quad  \quad \quad \quad \quad&
\begin{tikzpicture}[vertice/.style={draw,circle,fill,minimum size=0.3cm,inner sep=0}]
\node[vertice] (D1) at (1,.5) {};
\node[vertice] (E1) at (0,1.5) {};
\node[vertice] (F1) at (-1,.5) {};
\draw[very thick] (D1)--(F1)--(E1)--(D1);
\node[left] at (-0.5,1.15) {$4$};
\node[right] at (0.5,1.15) {$4$};
\node[below] at (0,.5) {$2$};
\node at (-1,2) {$T(4,4,2)$};
\end{tikzpicture}
\end{tabular}
\caption{$\QP$-irreducible graphs of type $\overline{K}_r$, $S_{2\ell}$, and $T(4,4,2)$.}
\label{fig-simple-blocks}
\end{figure}
\end{center}

The $K(\pi,1)$ conjecture referred to an Artin group $\A_\Gamma$ claims that a certain space, that 
appears as a quotient of the complement of the Coxeter arrangement by the action of the Coxeter group 
associated to $\Gamma$ is an Eilenberg-MacLane space whose fundamental group is $\A_\Gamma$ --\,or a 
$K(\A_\Gamma,1)$ space --\,see for instance~\cite{Paris-lectures} for a detailed explanation of this 
conjecture. In the context of quasi-projective groups, we can also ask ourselves 
whether or not a quasi-projective Artin group is realizable by an Eilenberg-MacLane space.

\begin{main-conj}[Quasi-projective $K(\pi,1)$ conjecture]
Any quasi-projective Artin group $\A_\Gamma$ can be realized as $\A_\Gamma=\pi_1(X)$ for a smooth, 
connected, quasi-projective Eilenberg-MacLane space~$X$.
\end{main-conj}

The other main result of this paper is a positive answer to the quasi-projective $K(\pi,1)$ 
conjecture for even Artin groups.

\begin{main-thm}
\label{thm-kpi1}
Quasi-projective even Artin groups satisfy the quasi-projective $K(\pi,1)$ conjecture.
\end{main-thm}

This paper is organized as follows: in section~\ref{sec-settings} the general definitions of 
(even) Artin groups and quasi-projective groups will be given as well as the notion of 
characteristic varieties as an invariant of a group. Section~\ref{sec-preliminaries} will 
be devoted to studying kernels of cyclic quotients of Artin groups, called co-cyclic subgroups. 
Section~\ref{sec-irreduicible} focuses on the problem of finding $\QP$-irreducible graphs. 
The main theorems will be proved in section~\ref{sec-proofs}.

\section{Settings and definitions}
\label{sec-settings}
\subsection{Artin groups}
Artin groups are an interesting family of groups from both an algebraic and a topological 
point of view.

We recall the definition of the Artin group associated with a labeled graph $\Gamma=(V,E,\ell)$, 
where $(V,E)$ is a graph and $m:E\to \mathbb Z_{\geq 2}$ is the label map 
assigning an integer $m_e=m(e)\in \mathbb Z_{\geq 2}$ to each edge $e\in E$ of~$\Gamma$.

\begin{dfn}(Artin groups)
Let $\Gamma=(V,E,m)$ be a labeled graph, the \emph{Artin group} $\A_{\Gamma}$
associated with $\Gamma$ has the following presentation:
\begin{equation}
\label{eq-Artingroup}
\A_{\Gamma}=\langle v; v\in V\mid \langle uv\rangle^{m_e}=
\langle vu\rangle^{m_e}, e=\{u,v\}\in E \rangle.
\end{equation}
where $\langle uv\rangle^{m_e}$ is the alternating product of length $m_e$ 
beginning with $u$.
$$\langle uv\rangle^{m_e}=\underbrace{(uv...)}_{m_e}.$$
Note that $\langle u,v\rangle^{2\ell}=(uv)^{\ell}$.

Right-angled Artin groups are defined as Artin groups in which all the edges of the graph 
have label~2.
\end{dfn}

\begin{obs}
As a word of caution for the reader, note that non-adjacent vertices have no 
associated relation. This notation differs in other contexts where non-adjacencies
are replaced by $\infty$-labeled edges, edges with label 2 are removed, and labels 
3 are erased.
\end{obs}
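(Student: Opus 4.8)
This final statement is a \emph{Remark}---a word of caution about conventions rather than a mathematical assertion---so the "proof" I would give splits into reading off the definition and setting up a dictionary between two drawing conventions. For the first clause, I would simply unwind the presentation~\eqref{eq-Artingroup}: the defining relators are indexed by the edge set, one relator $\langle uv\rangle^{m_e}=\langle vu\rangle^{m_e}$ for each $e=\{u,v\}\in E$. If $u$ and $v$ are non-adjacent then by the definition of adjacency $\{u,v\}\notin E$, so the presentation contains no defining relation associated with the pair $\{u,v\}$. That is precisely the asserted caution, and nothing beyond inspecting the definition is required.

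For the second clause I would exhibit an explicit correspondence between the labeled graph $\Gamma=(V,E,m)$ used throughout this paper and the Coxeter-type diagram $\Gamma'$ favoured in other references, and then check that the two presentations define the same group. The point is that here an edge records the presence of a relation and a non-edge records its absence, whereas in the alternative convention the roles of "commuting" and "non-adjacent" are interchanged. Concretely, I would build $\Gamma'$ on the same vertex set $V$ by three rules: (i) insert an edge labeled $\infty$ whenever $\{u,v\}\notin E$, reflecting that an $\infty$-label encodes the absence of a braid relation; (ii) delete every edge of $\Gamma$ carrying the label $2$, since a missing edge of $\Gamma'$ is read as the commuting relation $uv=vu$; and (iii) erase the label on every edge of $\Gamma$ carrying the label $3$, that value being the implicit default in the Coxeter convention. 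I would then verify, relation by relation, that the relator $\langle uv\rangle^{m_e}=\langle vu\rangle^{m_e}$ produced by each edge of $\Gamma$ matches the relator assigned to the corresponding edge or non-edge of $\Gamma'$: the label-$2$ case $uv=vu$, the label-$3$ case $uvu=vuv$, and the "no relation" case all agree on the two sides.

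The only genuine subtlety---and hence the step I would be most careful with---is keeping the direction of the dictionary straight, since it is easy to confuse which convention treats adjacency as "relation" and which treats it as "commuting". Once the three translation rules above are fixed, the equality of the two presentations is a term-by-term comparison with no further content, and the remark is established.
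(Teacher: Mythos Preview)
Your proposal is correct, and indeed more thorough than what the paper does: the paper offers no proof at all for this statement, since it is a \emph{Remark} on notational conventions rather than a mathematical assertion requiring justification. Your unpacking of the presentation~\eqref{eq-Artingroup} and the explicit dictionary between the two graph conventions is accurate and appropriately cautious, but strictly speaking no argument is expected here.
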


One special subfamily of Artin groups which we will use along this text is the family of 
even Artin groups.

\begin{dfn}(Even Artin groups)	
We say that an Artin group associated with $\Gamma=(V,E,m)$ is \emph{even} 
if its labels $m_e$, $e\in E$ are all even numbers. This will be oftentimes
denoted as~$\Gamma=(V,E,2\ell)$.
\end{dfn}

\subsection{Quasi-projective groups}
The main focus of this paper is the study of those groups that can appear as 
fundamental groups in an algebraic geometry context, in particular as fundamental
groups of smooth connected quasi-projective varieties. 
Recall that a \emph{quasi-projective} variety is the complement of a hypersurface in a 
projective variety defined simply as the zero locus of a finite number of homogeneous 
polynomials in~$\CC[x_0,...,x_n]$.

\begin{dfn}
A group $G$ is \emph{quasi-projective} if $G=\pi_1(X)$ for a smooth connected 
quasi-projective variety~$X$.
\end{dfn}

\begin{exam}
\label{exam-abelian}
Since the fundamental group of the complement of a smooth plane curve of degree $d$ in 
$\PP^2$ is the cyclic group $\ZZ_d$ and the complement of two lines in $\PP^2$ has the
homotopy type of $\CC^*$, all cyclic groups are quasi-projective. Moreover, since the 
complement of $r+1$ irreducible smooth curves $C_0,...,C_r$ of degrees $d_i=\deg C_i$ 
intersecting transversally has fundamental group
$$\pi_1(\PP^2\setminus C_0\cup...\cup C_r)=\ZZ^r\oplus \ZZ_d,$$
where $d=\gcd(d_0,...,d_r)$, one immediately obtains that all abelian groups are quasi-projective.
\end{exam}

This example points out that the quasi-projective variety whose fundamental group
realizes a quasi-projective group is clearly not unique in any geometrical sense, since the
torsion part $d$ can be attained in many different ways.

\begin{exam}
\label{exam-free}
On the other end of abelianization properties, the free group of rank $r$ is also
quasi-projective since it can be realized as the fundamental group of the complement of $r+1$ 
points in the complex projective line~$\PP^1$.
\end{exam}

The following important properties of quasi-projective groups are well known.

\begin{prop}
\label{prop:QP}
\mbox{}
\begin{enumerate}
\item \label{QPGL}
If $G$ is a quasi-projective group and $K\subset G$ is a finite index subgroup of $G$,
then $K$ is also a quasi-projective group.
\item \label{QP2J}
If $G_1,G_2$ are quasi-projective groups, then $G_1\times G_2$ is also a quasi-projective group.
\end{enumerate}
\end{prop}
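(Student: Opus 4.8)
The statement collects two standard facts about quasi-projective groups, so the proof is essentially a matter of invoking the right geometric constructions. First I would dispose of~\eqref{QP2J}, the product case. If $G_i = \pi_1(X_i)$ with each $X_i$ a smooth connected quasi-projective variety, then $X_1 \times X_2$ is again smooth, connected, and quasi-projective: one realizes each $X_i$ as the complement $Y_i \setminus H_i$ of a hypersurface in a projective variety, and then $X_1 \times X_2 = (Y_1 \times Y_2) \setminus (H_1 \times Y_2 \cup Y_1 \times H_2)$ exhibits the product as a hypersurface complement in the projective variety $Y_1 \times Y_2$. Since $\pi_1(X_1 \times X_2) \cong \pi_1(X_1) \times \pi_1(X_2) = G_1 \times G_2$, we are done. (This is exactly the mechanism behind the direct-product-of-free-groups realizations for right-angled Artin groups.)

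For~\eqref{QPGL}, the finite-index case, the plan is to pass to the covering space associated to the subgroup. Write $G = \pi_1(X)$ with $X$ smooth, connected, quasi-projective, and let $K \subset G$ have finite index $n$. Let $p \colon \widetilde{X} \to X$ be the connected covering space corresponding to $K$, so that $\pi_1(\widetilde{X}) \cong K$ and $p$ is a finite $n$-sheeted covering. The substance of the argument is that a finite étale cover of a smooth quasi-projective variety is again a smooth quasi-projective variety: étaleness preserves smoothness, connectedness is built in by construction, and a finite cover of a quasi-projective variety is quasi-projective — e.g.\ because $p$ is a finite morphism, hence affine, and one can compactify $X$ to a projective $Y$, take the normalization of $Y$ in the function field of $\widetilde{X}$ to get a projective $\overline{\widetilde{X}}$, and then $\widetilde{X}$ is the complement in $\overline{\widetilde{X}}$ of the preimage of the boundary hypersurface (together with any added singular locus, which is again a proper closed subset, hence contained in a hypersurface after a further blow-up if necessary). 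Thus $K = \pi_1(\widetilde{X})$ is quasi-projective.

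The one genuinely delicate point — the ``main obstacle'' — is making sure that in~\eqref{QPGL} the cover $\widetilde{X}$ is realized as an honest \emph{hypersurface complement in a projective variety}, and not merely as a Zariski-open subset of a proper variety or as an abstract smooth quasi-projective variety. This is where one must be a little careful about the definition adopted in the paper (complement of a hypersurface, i.e.\ a divisor, not of an arbitrary closed subscheme). The resolution is standard: any smooth quasi-projective variety is the complement of a hypersurface in a smooth projective variety after resolving singularities of a chosen compactification and, if needed, blowing up the boundary so that it becomes a divisor; applying this to the normalized cover $\overline{\widetilde{X}}$ and invoking functoriality of the construction under the finite map $p$ closes the gap. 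Since both items are classical, I would keep the write-up brief, citing the relevant standard references for resolution of singularities and for the behaviour of fundamental groups under finite covers and products rather than reproving them.
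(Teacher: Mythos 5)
The paper offers no proof of this proposition at all: it is stated as a pair of ``well known'' facts and used as a black box (notably part~\eqref{QPGL} in the proof of Lemma~\ref{subgrafo} and part~\eqref{QP2J} for the 2-join construction). So there is nothing in the paper to compare against line by line; what matters is whether your argument is sound, and it essentially is. Part~\eqref{QP2J} is handled exactly as one would expect: $Y_1\times Y_2$ is projective via the Segre embedding, $(H_1\times Y_2)\cup(Y_1\times H_2)$ is a hypersurface in it, and the K\"unneth/product formula for $\pi_1$ finishes the argument. For part~\eqref{QPGL} your overall strategy (pass to the finite cover, algebraize it, compactify, remove the boundary) is the standard one, and your remark that one must end up with a genuine hypersurface complement --- fixed by resolving the compactification and noting that deleting a codimension~$\geq 2$ locus from a smooth variety does not change $\pi_1$ --- is exactly the right technical point to worry about given the paper's definition.

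One small circularity in the write-up of~\eqref{QPGL} is worth repairing: you define $\widetilde{X}$ as the \emph{topological} covering space attached to $K$ and then immediately take ``the normalization of $Y$ in the function field of $\widetilde{X}$,'' but a priori $\widetilde{X}$ is only a complex manifold and has no function field until you have algebraized it. The clean way to close this loop is to invoke the Riemann existence theorem (Grauert--Remmert): every finite topological covering of a quasi-projective variety carries a unique structure of a finite \'etale morphism of algebraic varieties. Once that is in place, your normalization-and-compactification argument goes through verbatim and produces the required projective model with a boundary divisor. With that citation added, the proof is complete and is the argument the authors are implicitly relying on.
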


%
%
%

\subsection{Serre's question for Artin groups}
The question about deciding whether a certain group is quasi-projective is known as 
\emph{Serre's question}. This question is solved for right-angled Artin groups, 
but almost nothing is known for more general Artin groups.

\begin{thm}[{\cite[Theorem 11.7]{Dimca-Papadima-Suciu-jump-loci}}]
\label{Suciu}
The right-angled Artin group $\A_\Gamma$ is quasi-projective 
if and only if $\A_\Gamma$ is a product of finitely generated free groups, 
i.e. $\A_\Gamma =\FF_{n_{1}} \times ...  \times \FF_{n_{r}}$.
\end{thm}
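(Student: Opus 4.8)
\smallskip\noindent\emph{Proof strategy.} The plan to prove this runs as follows. Throughout I identify a right-angled Artin group on a complete multipartite graph $\overline{K}_{n_1}*\dots*\overline{K}_{n_r}$ with the product $\FF_{n_1}\times\dots\times\FF_{n_r}$ (a join of graphs yields the direct product of the associated Artin groups, and $\A_{\overline{K}_n}=\FF_n$); since quasi-projectivity is an isomorphism invariant, the theorem is equivalent to the assertion that $\A_\Gamma$ is quasi-projective if and only if $\Gamma$ is complete multipartite. I also use the elementary fact that $\Gamma$ is complete multipartite exactly when it has no induced subgraph isomorphic to $K_2\sqcup K_1$ (one edge together with one isolated vertex), equivalently when non-adjacency is an equivalence relation on $V$.

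\emph{Easy implication.} If $\Gamma=\overline{K}_{n_1}*\dots*\overline{K}_{n_r}$ then $\A_\Gamma=\FF_{n_1}\times\dots\times\FF_{n_r}$; since $\FF_n=\pi_1(\PP^1\setminus\{n+1\text{ points}\})$ by Example~\ref{exam-free} and a product of quasi-projective groups is quasi-projective by the second part of Proposition~\ref{prop:QP}, we get $\A_\Gamma=\pi_1\bigl(\prod_i(\PP^1\setminus\{n_i+1\text{ points}\})\bigr)$, which is smooth, connected and quasi-projective.

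\emph{Hard implication: setup.} To see that a quasi-projective $\A_\Gamma$ has complete multipartite $\Gamma$, I would argue via cohomology jump loci. First, right-angled Artin groups are $1$-formal (Kapovich--Millson; Papadima--Suciu), so the first resonance variety $\mathcal{R}^1(\A_\Gamma)\subseteq H^1(\A_\Gamma;\CC)=\CC^V$ is the tangent cone at the trivial character of the first characteristic variety $\mathcal{V}^1(\A_\Gamma)$. Second, by the Papadima--Suciu computation of resonance varieties of right-angled Artin groups, $\mathcal{R}^1(\A_\Gamma)=\bigcup_W \CC^W$, the union being over the subsets $W\subseteq V$ whose induced subgraph $\Gamma_W$ is disconnected, with irreducible components the coordinate subspaces $\CC^W$ for $W$ \emph{maximal} with this property. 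Third, by the structure theorem of Arapura and of Dimca--Papadima--Suciu for cohomology jump loci of quasi-Kähler $1$-formal groups, the irreducible components of $\mathcal{R}^1(G)$ are rationally defined linear subspaces meeting pairwise only at $0$, and on each component $V$ the cup-product pairing $\bigwedge^2 V\to H^2(G)$ either vanishes identically (the isotropic, genus-zero orbifold-pencil case) or has one-dimensional image on which it is a non-degenerate skew form (the genus $\ge 1$ orbifold-fibration case).

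\emph{Hard implication: conclusion.} The key point is that $H^2(\A_\Gamma;\CC)$ has a basis $\{e_{uv}\}$ indexed by the edges of $\Gamma$ with $u\cup v=\pm e_{uv}$ for $\{u,v\}\in E$ and $u\cup v=0$ otherwise; hence the cup product vanishes on $\CC^W$ precisely when $\Gamma_W$ is edgeless. The non-isotropic alternative of the structure theorem is then impossible for a right-angled Artin group: if $\Gamma_W$ had a single edge $\{u,v\}$ then all other basis vectors of $\CC^W$ would lie in the radical of the resulting skew form, so non-degeneracy would force $W=\{u,v\}$, whence $\Gamma_W$ would be a single edge --- connected, contradicting that $W$ is disconnected-inducing; and if $\Gamma_W$ had two or more edges the cup-product image would be at least $2$-dimensional. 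Therefore quasi-projectivity forces $\Gamma_W$ to be edgeless for every maximal disconnected-inducing $W$. Finally, an induced $K_2\sqcup K_1$ on a triple $\{u,v,w\}$ with $uv\in E$ and $uw,vw\notin E$ is itself disconnected-inducing, so enlarging it to a maximal disconnected-inducing set $W\supseteq\{u,v,w\}$ would give a component $\CC^W$ with $\Gamma_W$ containing the edge $uv$; as this is impossible, $\Gamma$ has no induced $K_2\sqcup K_1$ and is therefore complete multipartite.

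\emph{Main obstacle.} The graph theory and the realization step are routine; the weight lies in the jump-loci input. The delicate issue is not quoting a bare ``components are isotropic'' slogan, but pinning down the precise shape of the non-isotropic components allowed by quasi-projectivity --- a single non-degenerate skew form with one-dimensional image --- and then checking that this is incompatible with the very rigid coordinate-subspace form $\CC^W$ (with $\Gamma_W$ disconnected) that the right-angled computation imposes. Making that incompatibility argument airtight is where the proof really lives.
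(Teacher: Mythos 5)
Your proposal is correct and follows exactly the strategy the paper attributes to the cited source \cite{Dimca-Papadima-Suciu-jump-loci}: the paper does not reprove this theorem but notes that the direct implication uses resonance-variety obstructions for quasi-projective groups and the converse is a realization by products of complements of points, which is precisely your argument. Your fleshed-out version (Papadima--Suciu's computation $\mathcal{R}^1(\A_\Gamma)=\bigcup_W\CC^W$ over disconnected-inducing $W$, the $0$/$1$-isotropy dichotomy from the structure theorem ruling out any edge in $\Gamma_W$, and the $K_2\sqcup K_1$ characterization of complete multipartite graphs) is sound and matches the original proof.
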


The direct implication is proved by exploiting the obstructions of resonance varieties of
quasi-projective groups. The converse is achieved by realizing such groups as fundamental 
groups of quasi-projective varieties, built as products of complements of points in~$\CC$.

In fact, this result can be interpreted in terms of the graphs via the 2-join construction
as follows.

\begin{dfn}\label{def-2join}
Consider $\Gamma_1$ and $\Gamma_2$ two labeled graphs. 
The \emph{2-join} of $\Gamma_1$ and $\Gamma_2$, denoted by $\Gamma=\Gamma_1*_2 \Gamma_2$ 
is the labeled graph $\Gamma$ defined as the join of the graphs and whose label 
is defined as
$$
m(e)=\begin{cases} 
m_i(e) & \text{ if } e\in E(\Gamma_i)\\
2 & \text{ otherwise.}
\end{cases}
$$
\end{dfn}
The Artin group of a 2-join is the product of the Artin groups, that is,
\begin{equation}
\label{eq-2join}
\A_{\Gamma_1*_2\Gamma_2}=\A_{\Gamma_1}\times \A_{\Gamma_2}.
\end{equation}
From Example~\ref{exam-abelian} note that a free abelian group of rank $r$ is an Artin group
corresponding to a complete right-angled Artin group of $r$ vertices, or 2-joins of $r$ points.
From Example~\ref{exam-free} note that a free group of rank $r$ is also an Artin group
corresponding to a totally disconnected graph of $r$ vertices. Using~\eqref{eq-2join},
Theorem~\ref{Suciu} can be rewritten as follows.

\begin{thm}
\label{Suciu2}
Let $\Gamma$ be a right-angled graph, then $\A_{\Gamma}$ is quasi-projective 
if and only if $\Gamma$ is the $2$-join of finitely many totally disconnected graphs.
\end{thm}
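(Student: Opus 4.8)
The plan is to show that Theorem~\ref{Suciu2} follows formally from Theorem~\ref{Suciu} by translating the ring-theoretic statement ``$\A_\Gamma$ is a product of finitely generated free groups'' into the graph-theoretic statement ``$\Gamma$ is the $2$-join of finitely many totally disconnected graphs'', using the identity~\eqref{eq-2join} and the two Examples recalled above. Concretely, I would argue the two implications separately.

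\emph{From graphs to groups.} Suppose $\Gamma = \overline{K}_{n_1} *_2 \cdots *_2 \overline{K}_{n_r}$ is a $2$-join of totally disconnected graphs. By iterating~\eqref{eq-2join} we get $\A_\Gamma = \A_{\overline{K}_{n_1}} \times \cdots \times \A_{\overline{K}_{n_r}}$. By Example~\ref{exam-free}, $\A_{\overline{K}_{n}} = \FF_{n}$, the free group of rank $n$ (a totally disconnected graph on $n$ vertices gives a presentation with $n$ generators and no relations). Hence $\A_\Gamma$ is a product of finitely generated free groups, so by Theorem~\ref{Suciu} it is quasi-projective.

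\emph{From groups to graphs.} Conversely, suppose $\Gamma$ is right-angled and $\A_\Gamma$ is quasi-projective. By Theorem~\ref{Suciu}, $\A_\Gamma \cong \FF_{n_1} \times \cdots \times \FF_{n_r}$. The point is that this abstract isomorphism must already be visible in $\Gamma$: a right-angled Artin group decomposes uniquely (up to order and trivial factors) as a product of directly indecomposable RAAGs, and this decomposition corresponds exactly to the decomposition of the complement graph $\overline{\Gamma}$ into connected components (equivalently, of $\Gamma$ into a join of the corresponding induced subgraphs). A free factor $\FF_{n_i}$ with $n_i \geq 2$ is directly indecomposable and corresponds to an induced subgraph with no edges, i.e. $\overline{K}_{n_i}$; a factor $\FF_1 = \ZZ$ corresponds to a single vertex, which is also $\overline{K}_1$. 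Reassembling, $\Gamma = \overline{K}_{n_1} *_2 \cdots *_2 \overline{K}_{n_r}$, which is what we want. (One must make sure all connecting edges have label $2$: this is automatic since $\Gamma$ is right-angled to begin with.)

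The main obstacle is the uniqueness-of-direct-product-decomposition step in the converse direction: one needs that the abstract group isomorphism $\A_\Gamma \cong \FF_{n_1}\times\cdots\times\FF_{n_r}$ forces a matching join decomposition of $\Gamma$ rather than merely guaranteeing the existence of \emph{some} graph realizing that group. This can be handled either by invoking the known rigidity of RAAGs under direct products (free groups of rank $\geq 2$ and $\ZZ$ are directly indecomposable, and RAAG direct-product decompositions are essentially unique, e.g. via centralizer/center arguments), or, more cheaply, by simply observing that Theorem~\ref{Suciu} already asserts the \emph{iff} at the level of groups, so it suffices to exhibit \emph{one} right-angled graph whose Artin group is $\FF_{n_1}\times\cdots\times\FF_{n_r}$ and note that every right-angled graph with that Artin group must in fact be this one up to isomorphism. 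In the present paper only the reformulation is needed, so I would keep this paragraph brief, citing the structural facts about RAAGs rather than reproving them, and emphasize that the content is entirely contained in Theorem~\ref{Suciu} together with~\eqref{eq-2join}.
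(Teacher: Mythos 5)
Your proposal is correct and follows the same route the paper takes: the paper offers no separate proof of this theorem, presenting it as a direct rewriting of Theorem~\ref{Suciu} via the identity~\eqref{eq-2join} together with Examples~\ref{exam-abelian} and~\ref{exam-free}, exactly as in your first direction. The one place you go beyond the paper is the converse, where you correctly identify that passing from the abstract isomorphism $\A_\Gamma\cong\FF_{n_1}\times\cdots\times\FF_{n_r}$ back to a join decomposition of $\Gamma$ needs a rigidity input; your first suggested fix (graph-isomorphism rigidity of right-angled Artin groups, i.e.\ Droms' theorem, or the correspondence between direct-product decompositions and connected components of the complement graph) is a valid way to close this, but your ``cheaper'' alternative is circular, since ``every right-angled graph with that Artin group must in fact be this one'' \emph{is} the rigidity statement being avoided. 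In fact the detour is unnecessary: the cited result of Dimca--Papadima--Suciu is already stated at the level of graphs (quasi-projectivity is equivalent to $\Gamma$ being a complete multipartite graph, which is precisely a $2$-join of totally disconnected graphs), so the converse can be quoted directly in graph form, which is what the paper implicitly does.
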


For triangle Artin groups and general type Artin groups, partial results on their 
quasi-projectivity are given in~\cite{ACM-artin}, among those we describe the following 
associated with Artin groups of type $\A_{S_{2\ell}}$ and $\A_T$ in Figure~\ref{fig-simple-blocks}.

\begin{thm}[{\cite[Chapter 5]{ACM-artin}}]
\label{BP}
The Artin groups $\A_{S_{2\ell}}=\langle a,b\mid (ab)^\ell=(ba)^\ell\rangle$ and
$\A_T=\langle a,b,c\mid abab=baba, acac=caca, bc=cb\rangle$
are quasi-projective.
\end{thm}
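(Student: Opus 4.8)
The plan is to realize each of the two Artin groups $\A_{S_{2\ell}}$ and $\A_T$ explicitly as the fundamental group of a smooth connected quasi-projective variety, namely a complement of an algebraic curve in $\PP^2$ (or a closely related surface), following the strategy of \cite{ACM-artin}. First I would treat $\A_{S_{2\ell}}=\langle a,b\mid (ab)^\ell=(ba)^\ell\rangle$. The key observation is that the relator $(ab)^\ell=(ba)^\ell$ is equivalent to saying that the element $z=(ab)^\ell$ is central, and that modulo this center one obtains a group closely related to a free product with amalgamation or an orbifold group of a sphere with two or three cone points. Concretely, one can exhibit $\A_{S_{2\ell}}$ as an extension $1\to \ZZ\to \A_{S_{2\ell}}\to G\to 1$ where $G$ is the orbifold fundamental group of $\PP^1$ with suitable cone points (of orders related to $\ell$), and then realize this extension geometrically via a pencil of curves, i.e. a fibration $f:X\to B$ over an orbifold base $B$, where $X$ is a quasi-projective surface, the generic fiber is $\CC^*$ or a punctured curve contributing the central $\ZZ$, and the monodromy around the special fibers produces the $\ell$-th power relations. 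The complement of an appropriate curve $\cC\subset\PP^2$ — one natural candidate being related to a curve with a singular point whose local or global monodromy is $(ab)^\ell(ba)^{-\ell}$ — will have exactly this fundamental group; this can be checked by a Zariski–van Kampen computation on the pencil of lines through a suitable point.

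For $\A_T=\langle a,b,c\mid abab=baba,\ acac=caca,\ bc=cb\rangle$ the approach is analogous but one dimension of complexity higher. Here the label-$2$ edge $bc=cb$ suggests that $\langle b,c\rangle\cong\ZZ^2$, so $\A_T$ should be realized as built from the rank-$2$ abelian piece together with the two commuting $(4,4)$-type relations involving $a$. I would look for a curve complement $\PP^2\setminus\cC$ fibered over a base in such a way that $b$ and $c$ generate the free abelian group coming from (say) two components of $\cC$ meeting transversally, while $a$ is a meridian whose interaction with each of $b$ and $c$ is governed by a local singularity of $\cC$ of the same type as the one used for $S_{2\ell}$ at $\ell=2$. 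The curve $\cC$ would then be assembled as a union of the $S_4$-type configuration(s) together with extra lines or components producing the commuting relations; again the fundamental group is computed via Zariski–van Kampen, and one must verify that the resulting presentation simplifies exactly to the three braid-type relations of $\A_T$.

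The main obstacle I expect is the explicit construction and verification step: writing down a curve $\cC\subset\PP^2$ (or a smooth quasi-projective surface) whose fundamental group of the complement is \emph{exactly} $\A_{S_{2\ell}}$ or $\A_T$, with no extra generators or relations, and certifying this via a braid monodromy / Zariski–van Kampen presentation. In particular for $\A_T$ one has to be careful that the two $(4,4)$-relations genuinely commute in the geometric model (this forces a precise transversality/local-normal-crossing condition on $\cC$) and that no parasitic relation is introduced by the global structure of the pencil. A secondary technical point is handling the central element $(ab)^\ell$: one must ensure the chosen fibration has the right generic fiber (so that this center is realized by a loop around a vertical divisor) and that the Euler-characteristic / orbifold data of the base matches, so that the exact sequence of the fibration yields the claimed presentation rather than a quotient or an extension of it. Since \cite{ACM-artin} is cited as already establishing precisely these two realizations, for the purposes of this paper I would simply invoke that reference for the curves and their monodromy, and restrict the proof here to recalling the pencil construction and checking that the van Kampen presentation it produces coincides with the standard presentation of $\A_{S_{2\ell}}$ and $\A_T$ given in the statement.
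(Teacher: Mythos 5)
Your proposal is correct in substance and ends up where the paper does: Theorem~\ref{BP} is not proved in this paper but quoted from \cite[Chapter 5]{ACM-artin}, and the realizing curves are only displayed later, in the proof of Theorem~\ref{thm-kpi1}. For the record, the paper's realizations are more concrete than your sketch: $\A_{S_{2\ell}}$ is $\pi_1(\PP^2\setminus\cC)$ for $\cC=\{yz(yz^{\ell-1}-x^\ell)=0\}$, and $\A_T$ is the fundamental group of the complement of a smooth conic together with three tangent lines (equivalently, the affine curve $\{y=x^2\}\cup\{2x-y-1=0\}\cup\{2x+y+1=0\}$), where each tangency contributes a relation $abab=baba$ and the node of the two lines contributes $bc=cb$ --- exactly the mechanism you anticipated for the $T(4,4,2)$ case. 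One point where your picture differs from the geometry actually used: for $S_{2\ell}$ you propose a fibration with generic fiber $\CC^*$ realizing the \emph{central} $\ZZ$ generated by $(ab)^\ell$, over an orbifold base; the paper instead uses the projection $[x:y:z]\mapsto[y:z]$, which exhibits $\PP^2\setminus\cC$ as a locally trivial fibration over $\CC^*$ with fiber $\CC$ minus $\ell$ points, i.e.\ it presents $\A_{S_{2\ell}}$ as $\FF_\ell\rtimes\ZZ$ with the $\ZZ$ as quotient rather than as central kernel. Both descriptions of the group are valid, but the semidirect-product fibration is the one that produces a plane curve complement directly and is reused in the $K(\pi,1)$ argument, so if you were to write the proof out you should verify the Zariski--van Kampen presentation for these specific curves rather than for a Seifert-type model.
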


Our objective in this paper is to give a similar characterization 
to Theorem~\ref{Suciu2} for even Artin groups.

\subsection{Characteristic Varieties}
\label{sec:charvar}
Characteristic varieties are a sequence of invariants of a group. They were 
introduced by Hillman in~\cite{Hillman-alexander} for links using Alexander modules and 
systematically studied by Cohen-Suciu~\cite{Cohen-Suciu-characteristic} for hyperplane arrangement 
complements, Libgober~\cite{Libgober-characteristic} for plane curve complements and from a different 
point of view by Arapura in~\cite{Arapura-geometry} for K\"ahler manifolds using jumping loci 
of cohomology of local systems. It should also mentioned that the connection between Alexander 
modules and cohomology of local systems was first proved by E.Hironaka~\cite{Hironaka-alexander}.

For expository reasons we will mainly follow~\cite{Libgober-characteristic} and we will only provide 
specific references for the more specialized results.
Let $X$ be a finite CW-complex and $G=\pi_1(X)$ its fundamental 
group. For the sake of simplicity, we assume that the abelianization $H_{1}(G)=G/G'$ of $G$ 
is torsion-free, say~$H_{1}(G)=\ZZ^r$. Consider the universal abelian cover 
$\tilde X\rightmap{\phi} X$, where $\Deck(\phi)=\mathbb{Z}^{r}$ is generated by 
$t_{1},\dots,t_r\in \Deck(\phi)$. 
Since $\Deck(\phi)$ acts on $H_1(\tilde X)$, the group $H_1(\tilde X)$ inherits a 
module structure over the ring $\Lambda=\ZZ[\Deck(\phi)]=\ZZ[\mathbb{Z}^{r}]$.
This module $M_X=H_1(\tilde X)$ is called the Alexander module of $X$. As any $\Lambda$-module, 
$M_X$ has a sequence of invariants given by the Fitting ideals or analogously by the sequence 
of annihilators of its exterior powers as follows:
$$I_k=\Ann_{\Lambda}\left(\bigwedge^{k} M_X\right)\subset \Lambda,$$
where $\Ann_R(A)=\{r\in R \mid ra=0\ \forall\, a\in A\}\subset R$ is by definition 
the annihilator ideal of an $R$-module~$A$.
After tensoring $\Lambda$ by $\CC$, a new ring $\Lambda^\CC$ is obtained over which 
one can take an algebraic geometrical point of view and consider the zero locus of 
$I_r\otimes \Lambda^\CC$ inside the torus $\Spec \Lambda^\CC=(\CC^*)^r$.

\begin{dfn}
We define the sequence of \emph{characteristic varieties} of $X$ as:
$$V_{1}(X):=Z(I_1)\supset ... \supset V_{k}(X):=Z(I_k) \supset ...$$
where $Z(I_k)\subset (\CC^*)^r$ is the zero locus of~$I_k$.
\end{dfn}

There is an alternative way to define characteristic varieties using Fitting ideals.

\begin{dfn}
Let $\varphi: A_2\rightarrow A_1$ be a map of free modules over a ring~$R$. 
We define the ideal $\tilde F_k(\varphi)\subset R$ as the image of the canonical map:
$$\bigwedge^k A_2 \otimes \bigwedge^k A_1^* \rightarrow R$$
induced by~$\varphi$.
\end{dfn}

\begin{dfn}
Let $M$ be a finitely presented module over $R$ and consider a free resolution 
$$\varphi: A_2\rightarrow A_1\rightarrow M \rightarrow 0$$
of $M$ such that $A_1$ (resp. $A_2$) is a finitely generated $R$-module of rank $r$
(resp. $s$). 
For every integer $k\geq 0$ we define the $k$-th Fitting ideal of $M$ to be
$$F_k(M):=\tilde F_{r-k} (\varphi).$$
\end{dfn}

\begin{prop}
Under the above conditions, the sequence of characteristic varieties $V_k(X)$ 
coincides with the zero locus of the Fitting ideals of its Alexander module~$F_k(M_X)$.
\end{prop}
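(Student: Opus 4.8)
The plan is to reduce the statement to a purely commutative-algebra fact relating, for a finitely presented module $M$ over $R=\Lambda^\CC$, the annihilator ideals $I_k=\Ann_R(\bigwedge^k M)$ to the Fitting ideals $F_k(M)=\tilde F_{r-k}(\varphi)$, and then observe that this gives equality of zero loci $Z(I_k)=Z(F_k(M))$ inside $\Spec R=(\CC^*)^r$. In general $I_k$ and $F_k$ need not coincide as ideals, but they do define the same subvariety; the precise statement one needs is that $\sqrt{I_k}=\sqrt{F_k(M)}$, equivalently that a prime $\mathfrak p$ contains $F_k(M)$ if and only if it contains $\Ann_R(\bigwedge^k M)$. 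Since $V_k(X)=Z(I_k(M_X))$ by definition and $M_X$ is finitely presented (being the Alexander module of a finite CW-complex, so $A_1,A_2$ in the free resolution are finitely generated), the Proposition follows once this algebraic comparison is in place.

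First I would recall the standard localization behaviour of Fitting ideals: $F_k$ commutes with base change, so for a prime $\mathfrak p\subset R$ one has $F_k(M)_\mathfrak p=F_k(M_\mathfrak p)$, and over the local ring $R_\mathfrak p$ (or better, after passing to the residue field $k(\mathfrak p)$) the condition $F_k(M)\not\subset\mathfrak p$ is equivalent to $M\otimes k(\mathfrak p)$ being generated by at most $k$ elements, i.e. $\dim_{k(\mathfrak p)} M\otimes k(\mathfrak p)\le k$. On the other hand $\bigwedge^{k+1}(M\otimes k(\mathfrak p))=(\bigwedge^{k+1} M)\otimes k(\mathfrak p)$ vanishes exactly when that same dimension is $\le k$. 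So the two conditions ``$\mathfrak p\notin Z(F_k(M))$'' and ``$(\bigwedge^{k+1}M)\otimes k(\mathfrak p)=0$'' coincide. The remaining point is to match the indexing with the annihilator description: $\mathfrak p\in Z(I_k)$ means $\bigwedge^{k}M_\mathfrak p\ne 0$ after tensoring with $k(\mathfrak p)$ — here one uses that $\bigwedge^k M$ is again finitely presented, so by Nakayama $(\bigwedge^k M)_\mathfrak p\ne 0 \iff (\bigwedge^k M)\otimes k(\mathfrak p)\ne 0$, and a finitely generated module is annihilated by an ideal outside $\mathfrak p$ precisely when it localizes to zero. Tracking the shift $F_k(M)=\tilde F_{r-k}(\varphi)$ through this, the locus $Z(F_k(M))$ is exactly the set of primes where $M$ needs more than $k$ generators, which matches $Z(I_k)=V_k(X)$ under the conventions fixed above.

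The main obstacle I anticipate is bookkeeping rather than conceptual: one must be careful that the two definitions of characteristic varieties use compatible numbering (the paper's $V_k$ is cut out by the $k$-th annihilator ideal $I_k=\Ann(\bigwedge^k M_X)$, while the Fitting convention shifts by the rank $r$ of $A_1$), and that the passage between ``$\bigwedge^k$ nonzero at $\mathfrak p$'' and ``$\ge k$ generators at $\mathfrak p$'' is done with the correct index $k$ versus $k+1$. A secondary technical point is that $\Lambda^\CC=\CC[\ZZ^r]$ is not a domain-free situation one can ignore: it is a Laurent polynomial ring, Noetherian and regular, so localization and Nakayama apply without trouble, but one should note explicitly that $M_X$ is finitely presented over it so that all Fitting ideals are finitely generated and the zero loci are genuine Zariski-closed subsets of $(\CC^*)^r$. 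Once the indexing is pinned down, the argument is the classical comparison of Fitting ideals with annihilators of exterior powers (see e.g. the treatment in Eisenbud or Northcott), applied verbatim to $M=M_X$ over $R=\Lambda^\CC$.
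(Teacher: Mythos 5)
Your argument is sound in substance, but it takes a genuinely different route from the paper: the paper disposes of this proposition in one line by citing Buchsbaum--Eisenbud (\emph{What annihilates a module?}, Cor.~1.3), which gives ideal-theoretic containments between $F_k(M)$ and annihilators of exterior powers and hence equality of radicals. What you propose is, in effect, a self-contained proof of the radical-level statement via localization: base change for Fitting ideals, base change for exterior powers, and Nakayama reduce everything to linear algebra over the residue field. This buys independence from the reference and makes transparent that only equality of zero loci (not of ideals) is being used; the citation buys brevity and the stronger containments $F_k(M)\subseteq \Ann(\bigwedge^{k+1}M)$ with a power of the annihilator inside $F_k$, which you do not need here.

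The one point you must actually resolve, rather than assert, is the index matching you yourself single out as the main obstacle. Carried out carefully, your chain of equivalences reads: $\mathfrak p\supseteq F_k(M)$ iff $\dim_{k(\mathfrak p)}M\otimes k(\mathfrak p)\ge k+1$ iff $\bigwedge^{k+1}(M\otimes k(\mathfrak p))\ne 0$ iff $\mathfrak p\supseteq \Ann(\bigwedge^{k+1}M)=I_{k+1}$. That is, your argument proves $Z(F_k(M_X))=Z(I_{k+1})=V_{k+1}(X)$, equivalently $V_k(X)=Z(F_{k-1}(M_X))$, whereas the proposition as stated asserts $V_k(X)=Z(F_k(M_X))$. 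So your closing sentence, claiming that ``the locus $Z(F_k(M))$ \dots matches $Z(I_k)$,'' does not follow from the computation you set up; there is an honest shift by one between the annihilator indexing and the Fitting indexing (the same shift that surfaces in the paper's subsequent proposition relating $V_k$ to $F_{k+1}$). You should either exhibit the convention that absorbs this shift or note that the statement requires the index $k-1$ on the Fitting side; as written, the match is asserted exactly at the step you flagged as delicate.
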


\begin{proof}
This is an immediate consequence of~\cite[Cor. 1.3]{buchsbaum-eisenbud-annihilates}.
\end{proof}

Characteristic varieties of quasi-projective spaces satisfy the following result:

\begin{prop}{\cite{Arapura-geometry,dimca-papadima-suciu-essential}}
\label{Fin}
The irreducible components of the characteristic varieties associated to a 
quasi-projective group $G$ are algebraic translated tori by torsion points, that is, 
they are intersection of zero-sets of polynomials of the form
$$P(t_1,...,t_r)=\prod_i (t_1^{n_1}...t_r^{n_r}-\nu_i),$$
where $\nu_i$ is a root of unity.

Moreover, the intersection of two such irreducible components 
is a finite union of torsion points.
\end{prop}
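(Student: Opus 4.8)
The plan is to reduce the statement to the structure theory of cohomology support loci of rank-one local systems on smooth quasi-projective varieties, for which one may invoke the work of Arapura together with the Hodge-theoretic input of Simpson and its refinement by Dimca--Papadima--Suciu. First I would replace the Alexander-module description by a cohomological one. Writing $\TT(G)=\mathrm{Hom}(G,\CC^*)\cong(\CC^*)^r$ for the character torus and $L_\rho$ for the rank-one local system attached to $\rho\in\TT(G)$, the twisted chain complex of the universal abelian cover together with the comparison already quoted from~\cite{buchsbaum-eisenbud-annihilates} shows that, away from the trivial character $\uno$,
\[
V_k(X)\setminus\{\uno\}=\Sigma_k(X)\setminus\{\uno\},\qquad \Sigma_k(X):=\{\rho\in\TT(G):\dim_\CC H^1(X,L_\rho)\ge k\};
\]
the two loci can differ only as to whether $\uno$ itself is a point of them, which is irrelevant for describing irreducible components. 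So it is enough to prove the statement for the jump loci $\Sigma_k(X)$.

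Next I would apply Arapura's structure theorem for $\Sigma_k(X)$ on a smooth connected quasi-projective $X$: every positive-dimensional irreducible component $W$ is of the form $W=\rho\cdot f^*\big(\TT(\pi_1(S))\big)$ for an admissible map $f\colon X\to S$ onto a smooth (orbifold) curve $S$ of negative Euler characteristic and some $\rho\in\TT(G)$, while the remaining components are isolated points. In particular each positive-dimensional $W$ is a coset of the subtorus $f^*\TT(\pi_1(S))\subset\TT(G)$, hence a translated subtorus; expressing that coset as the common zero locus in $(\CC^*)^r$ of finitely many binomials $t_1^{n_1}\cdots t_r^{n_r}-\nu$ produces exactly the asserted shape $P=\prod_i(t_1^{n_1}\cdots t_r^{n_r}-\nu_i)$. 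What remains is to control the constants $\nu_i$ and the isolated points.

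The heart of the matter --- and the step I expect to be the main obstacle --- is showing that these translates are by \emph{torsion} points, i.e. that the $\nu_i$ are roots of unity and that the isolated components are torsion characters. Here Hodge theory is unavoidable. One uses that $\Sigma_k(X)$ is defined over $\overline{\QQ}$ and is stable under the Galois action of $\Gal(\overline{\QQ}/\QQ)$ on $\TT(G)$, combined with Simpson's theorem that a rank-one local system lying on such a jump locus and underlying a $\CC$-variation of Hodge structure has finite order modulo a subtorus; applying this along a Galois orbit forces a component $W=\rho T$ with $T$ a rational subtorus to have $\rho$ of finite order modulo $T$. For the isolated points the same Galois-plus-$\CC$-VHS argument (equivalently, the weight-one $\CC^*$-action $(E,\theta)\mapsto(E,t\theta)$ on the Betti moduli space and its fixed-point analysis) shows they are torsion. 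I would cite this from~\cite{Arapura-geometry,dimca-papadima-suciu-essential} and the underlying work of Simpson rather than reprove it; a self-contained alternative is to quote the later theorem of Budur--Wang that each $\Sigma_k(X)$ is a finite union of torsion-translated subtori.

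Finally, for the statement on intersections: if $W_1=\rho_1T_1$ and $W_2=\rho_2T_2$ are two such components (with $\rho_i$ torsion, $T_i$ subtori) and $W_1\cap W_2\neq\emptyset$, then the intersection is a single coset of $T_1\cap T_2$, and since it is again an $\overline{\QQ}$-defined, $\Gal(\overline{\QQ}/\QQ)$-stable torsion-translated subtorus it contains a torsion point, so it equals $\sigma(T_1\cap T_2)$ for some torsion $\sigma$. That it is actually finite then follows from the fact that distinct irreducible components of a characteristic variety come, via Arapura's dictionary, from genuinely distinct admissible maps on $X$, so that no positive-dimensional family of characters can lie on both; hence $T_1\cap T_2$ is finite and $W_1\cap W_2$ is a finite set of torsion points. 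Steps one, two and four are essentially formal once the relevant theorems are on the table; the real content, as noted, is concentrated in the torsion statement of step three.
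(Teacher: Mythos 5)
The paper offers no proof of Proposition~\ref{Fin}: it is quoted directly from \cite{Arapura-geometry} and \cite{dimca-papadima-suciu-essential}, so there is nothing internal to compare your argument against. Your sketch is an accurate outline of the proof contained in those references --- the identification of $V_k$ with the cohomology jump loci away from the trivial character, Arapura's structure theorem realizing positive-dimensional components as translated pullbacks of character tori along admissible maps to curves, the torsion statement via Simpson and Dimca--Papadima--Suciu (or Budur--Wang), and finiteness of intersections of components coming from distinct pencils --- and it correctly isolates the torsion claim as the one genuinely hard step.
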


From the computational point of view, a third way to calculate the sequence of 
characteristic varieties from a finite presentation of a group 
\begin{equation}
\label{eq:presG}
G=\pi_1(X)=\langle a_1,...,a_n : R_1=...=R_m=1\rangle
\end{equation}
is provided via Fox calculus (cf.~\cite{Fox-free1}). 

Formally, one associates a matrix 
$$A=\left( \frac{\partial R_i}{\partial a_j}\right)_{1\leq i\leq m, 1\leq j\leq n},$$
to the presentation~\eqref{eq:presG}, where the derivative of a word in the letters 
$a_1,\dots,a_n$ is obtained by extending the following defining properties by linearity:
$$
\frac{\partial uv}{\partial a_j}=
\frac{\partial u}{\partial a_j}+\phi(u)\frac{\partial v}{\partial a_j}, \quad\quad
\frac{\partial 1}{\partial a_j}=0,\quad \quad \text{and} \quad\quad
\frac{\partial a_i}{\partial a_j}=
\begin{cases}
1 & \text{ if } i=j\\
0 & \text{ otherwise.}
\end{cases}
$$

The matrix $A$ is called the \emph{Alexander matrix} associated with~\eqref{eq:presG}
and it turns out to be the matrix of the free resolution of a module which is not 
the Alexander module, but the \emph{Alexander invariant} 
$\tilde M_X=H_1(\tilde X,\phi^{-1}(p))$, which is the relative homology of the universal 
abelian cover of $X$ relative to the preimage of a point as a $\Lambda$-module exactly
as was done for the Alexander module $M_X$.
As in knot theory, both invariants are related (see for instance~\cite[Ch. 1]{ji-tesis}).

\begin{prop}
The sequence of characteristic varieties of $X$ can be calculated via Fox calculus as
$$V_k(X)\setminus \uno=Z(F_{k+1}(M_X))\setminus \uno=Z(F_{k+1}(\tilde M_X))\setminus \uno.$$
\end{prop}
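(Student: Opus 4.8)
The plan is to identify the module that Fox calculus actually presents --\,the Alexander invariant $\tilde M_X$\,-- and then to compare $\tilde M_X$ with the Alexander module $M_X$ away from the trivial character~$\uno$, the point being that the two carry the same information off~$\uno$.

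First I would build, from the finite presentation~\eqref{eq:presG}, the presentation $2$-complex $K$ of $G$, so $\pi_1(K)=G=\pi_1(X)$. The cellular chain complex of its universal abelian cover $\tilde K$ is
\begin{equation*}
\Lambda^{m}\xrightarrow{\ A\ }\Lambda^{n}\xrightarrow{\ d_1\ }\Lambda\longrightarrow 0 ,
\end{equation*}
where $A=\bigl(\tfrac{\partial R_i}{\partial a_j}\bigr)$ is the Fox Jacobian composed with the abelianization $\phi$, and $d_1$ is the column $(\phi(a_j)-1)_j$; the fundamental identity $\sum_j\tfrac{\partial R_i}{\partial a_j}(\phi(a_j)-1)=\phi(R_i)-1=0$ makes this a complex. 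Passing to the pair relative to $\phi^{-1}(p)=\tilde K^{(0)}$ deletes the $0$-chains, so $\tilde M_X=H_1(\tilde X,\phi^{-1}(p))=\coker(A)$ is presented by the Alexander matrix $A$; in particular $F_\bullet(\tilde M_X)$ is exactly the ideal of minors of $A$ produced by Fox calculus, and its behaviour under a change of presentation of $G$ is controlled by simple homotopy, so it is a group invariant. The proposition relating $V_k(X)$ to the Fitting ideals of $M_X$ (via \cite[Cor.~1.3]{buchsbaum-eisenbud-annihilates}) already gives the first equality in the statement, so it remains to prove the second.

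Next I would compare $M_X$ and $\tilde M_X$. Splicing the complex above yields the short exact sequence of $\Lambda$-modules
\begin{equation*}
0\longrightarrow M_X\longrightarrow \tilde M_X\longrightarrow \operatorname{im}d_1\longrightarrow 0 ,
\end{equation*}
and $\operatorname{im}d_1=(\phi(a_1)-1,\dots,\phi(a_n)-1)$ is the augmentation ideal $I_{\uno}=\ke\bigl(\epsilon\colon\Lambda\to\ZZ\bigr)$, since $\{\phi(a_j)\}$ generates $\Deck(\phi)=\ZZ^{r}$ (this is also the tail of the long exact sequence of the pair $(\tilde X,\phi^{-1}(p))$, using that $\phi^{-1}(p)$ is one free $\Deck(\phi)$-orbit and $\tilde X$ is connected). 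Localizing at the maximal ideal $\mathfrak m_\zeta$ of a point $\zeta\in(\CC^{*})^{r}\setminus\{\uno\}$: as $Z(I_{\uno})=\{\uno\}$, some $t_i-1$ is a unit in $\Lambda_{\mathfrak m_\zeta}$, so $(I_{\uno})_{\mathfrak m_\zeta}=\Lambda_{\mathfrak m_\zeta}$ is free, the localized sequence splits, and $(\tilde M_X)_{\mathfrak m_\zeta}\cong (M_X)_{\mathfrak m_\zeta}\oplus\Lambda_{\mathfrak m_\zeta}$. Since Fitting ideals commute with localization and adding a free rank-one summand shifts their index by one, the zero sets of the Fitting ideals of $M_X$ and of $\tilde M_X$ coincide on $(\CC^{*})^{r}\setminus\uno$ after the appropriate re-indexing, which is the remaining equality.

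The step I expect to be the main obstacle is precisely this bookkeeping around $\uno$: the isomorphism $\tilde M_X\cong M_X\oplus\Lambda$ fails exactly at $\uno$ --\,that is where $I_{\uno}$ stops being locally free\,-- which is why all three sets are only asserted to agree on $(\CC^{*})^{r}\setminus\uno$; and one has to line up the three normalizations in play --\,the index of $V_k$ as an annihilator of an exterior power, and the indices of $F_\bullet$ for $M_X$ and for $\tilde M_X$\,-- so that the single subscript $k+1$ of the statement is simultaneously correct for all of them.
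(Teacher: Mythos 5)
The paper offers no proof of this proposition at all: it simply records the statement after remarking that the Alexander module and the Alexander invariant ``are related'' and pointing to~\cite[Ch.~1]{ji-tesis}. So your proposal is not paralleling an argument in the text --- it is supplying the missing one, and it does so by the standard route (identify $\tilde M_X=\coker(A)$ via the presentation $2$-complex, splice the chain complex into the Crowell exact sequence $0\to M_X\to\tilde M_X\to I_{\uno}\to 0$, localize away from $\uno$ where $I_{\uno}$ becomes free of rank one, and use that Fitting ideals commute with localization and shift by one under adding a free summand). All of those steps are sound; this is essentially the argument one finds in the cited reference.

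One point you flagged but left unresolved deserves to be pressed: your own computation shows that off $\uno$ one has $F_{k+1}(\tilde M_X)=F_{k}(M_X)$ locally, \emph{not} $F_{k+1}(M_X)$. So under the paper's explicit convention $F_k(M):=\tilde F_{r-k}(\varphi)$, the displayed chain of equalities cannot carry the same subscript $k+1$ on both $M_X$ and $\tilde M_X$; the two indices must differ by one. (The paper is in fact internally inconsistent here: the preceding proposition asserts $V_k(X)=Z(F_k(M_X))$ with no shift, while Buchsbaum--Eisenbud gives $Z(\Ann\bigwedge^k M)=Z(F_{k-1}(M))$, so the correct alignment is $V_k(X)\setminus\uno=Z(F_{k-1}(M_X))\setminus\uno=Z(F_k(\tilde M_X))\setminus\uno$.) This is a defect of the statement's bookkeeping rather than of your argument --- the off-by-one is harmless for the paper's later use, which only ever tests whether coranks jump --- but a complete write-up should fix the subscripts rather than hope they coincide.
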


The computational advantage of $F_{k+1}(\tilde M_X)$ is that it can be computed from 
the Alexander matrix $A$ of a free resolution of $\tilde M_X$ as follows:
$$
F_{k+1}(\tilde M_X)=
\begin{cases}
\Lambda & \text{ if } k>n\\
0 & \text{ if } k \leq \max\{0,n-m\}  \\
(\text{minors of order } n-k \text{ of } A)  & \text{ otherwise. } \\
\end{cases}
$$

\section{Preliminaries}
\label{sec-preliminaries}
Characteristic varieties of even Artin groups are too similar to those of quasi-projective
groups and hence they cannot be used to tell them apart. However, some of their finite 
index subgroups can be detected as not quasi-projective. This is why we present a study 
of a certain type of subgroups of even Artin groups that will be key in the discussion
on quasi-projectivity.

\subsection{Co-cyclic subgroups of even Artin groups}
\label{Subgroups}
Let us consider the even Artin group associated with $\Gamma=(V,E,2\ell)$.

The Artin group associated with $\Gamma$ has a presentation 
$\A_{\Gamma}=\langle v; v\in V\mid A_{\ell_e}(e); e\in E\rangle$, 
where $A_{\ell_e}(e)$ denotes the relation 
$(uv)^{\ell_e}=(vu)^{\ell_e}$ with $e=\{u,v\}$.
Let us fix a vertex, say $u\in V$ and an integer $k>1$; our purpose is to give a 
presentation of the index $k$ subgroup $\A_{\Gamma,u,k}$ of $\A_{\Gamma}$ 
defined as the kernel of the following morphism:

$$
\array{rcl}
\alpha_{u,k}: \A_{\Gamma} &\longrightarrow & \mathbb{Z}_{k} \\
v & \mapsto & \begin{cases} 1 \hspace{0.5 cm} \text{if } v=u\\
0 \hspace{0.5 cm} \text{otherwise.}\end{cases}
\endarray
$$
As suggested by an anonymous referee, one can think of these as finite index normal subgroups of a group 
that appear as the kernel of a surjection onto a finite cyclic group, and refer to them as \emph{co-cyclic subgroups}.

Note that, for any $v\in V$ the conjugation of $v$ by $u^i$ is in $\A_{\Gamma,u,k}$,
$v_i:=u^ivu^{-i}$. Also, $\bar{u}:=u^{k}$ will be in the kernel of $\alpha_{u,k}$. 
In order to write a presentation for $\A_{\Gamma,u,k}$ we need some notation. Let us denote by 
$\langle x,y\rangle_{i,\varepsilon}^l$ a formal word in the letters $\{x_0,...,x_{k-1},y\}$ as follows
$$\langle x,y\rangle_{i,\varepsilon}^l=
(x_{i}\cdots x_{k-1}yx_0\cdots x_{i-1})^cx_{i}\cdots x_{i+r-1}x_{i+r}^\varepsilon,$$
where $l=ck+r$, $i\in \ZZ_k$ and $\varepsilon=0,1$.
Note that $\langle x,y\rangle_{i,\varepsilon}^l$ can be thought of as a cyclic product 
of the letters $x_0,\dots,x_{k-1}$, and $y$ starting at $x_i$ and with length $c(k+1)+r+\varepsilon$.

Also, let us consider the set of vertices in $V$ adjacent to $u$ with label~$2$:
$$V_{2,u}=\{v\in V\mid e=\{u,v\}\in E, m_e=2\ell_e=2\}.$$
The remaining vertices will be denoted by $W=V\setminus (\{u\}\cup V_{2,u})$.

One obtains the following presentation for~$\A_{\Gamma,u,k}$.

\begin{thm}
\label{thm-pres}
The co-cyclic subgroup $\A_{\Gamma,u,k}$ is generated by 
$$\{\bar u\}\cup V_{2,u} \cup \bigcup_{w\in W}\{w_{0},...,w_{k-1}\}$$
and the following is a complete set of relations:
\begin{itemize}
\item[($R$)] 
\begin{enumerate}
\item\label{rel-uv} 
$A_1(v,\bar u)$, for $v\in V_{2,u}$,
\item\label{rel-vv} 
$A_{\ell_e}(v,v')$, for $v,v'\in V_{2,u}$, $e=\{v,v'\}\in E$,
\item\label{rel-vw} 
$A_{\ell_e}(v,w_i)$, for $v\in V_{2,u}$, $w\in W$, $i\in \ZZ_k$, $e=\{v,w\}\in E$,
\item\label{rel-ww} 
$A_{\ell_e}(w_i,w'_i)$, for $w,w'\in W$, $i\in \ZZ_k$, $e=\{w,w'\}\in E$.
\end{enumerate}
\item[($RB$)] $B^i_{\ell_e,k}(w,\bar u)$, for $w\in W\cap \lk(u)$, $i\in \ZZ_k$, $e=\{u,w\}\in E$,
\end{itemize}
where $\ell_e=c_ek+r_e$ and $B_{\ell_e}^{i}(w,\bar u)$ is the relation
$$
\langle w,\bar{u}\rangle_{i,\varepsilon}^{\ell_e}=
\langle w,\bar{u}\rangle_{i+1,\varepsilon}^{\ell_e}
\quad \quad \text{and} \quad 
\varepsilon=
\begin{cases}
0 & \text{if } 0\leq i<k-r_{e}\\
1 & \text{otherwise}.
\end{cases}
$$
\end{thm}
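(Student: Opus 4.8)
The plan is to compute a presentation of the co-cyclic subgroup $\A_{\Gamma,u,k}=\ke(\alpha_{u,k})$ via the Reidemeister--Schreier rewriting process, using the Schreier transversal $\{1,u,u^2,\dots,u^{k-1}\}$ for the cosets of $\A_{\Gamma,u,k}$ in $\A_\Gamma$. First I would record that the Schreier generators of the subgroup are the elements $u^i x u^{-j}$ where $x$ runs over $V$ and $u^j$ is the coset representative of $u^i x$; for $x\neq u$ this gives $u^i x u^{-i}=x_i$ (since $\alpha_{u,k}(x)=0$), and for $x=u$ all the Schreier generators are trivial except the one coming from $u^{k-1}\cdot u$, which is $u^k=\bar u$. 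So the generating set $\{\bar u\}\cup\{x_i : x\in V\setminus\{u\},\ i\in\ZZ_k\}$ is immediate. Then I would observe that when $v\in V_{2,u}$ — i.e. $v$ is joined to $u$ by a label-$2$ edge — the relation $A_1(v,\bar u)$ forces all the conjugates $v_i$ to be equal, via $v_{i+1}=\bar u^{-1}\,\cdots$, so we may discard $v_1,\dots,v_{k-1}$ and keep a single generator $v=v_0$ for each such $v$; this is exactly why the statement separates $V_{2,u}$ from $W$. This bookkeeping of generators is routine but needs to be done carefully.

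Next I would turn to the relations. Reidemeister--Schreier produces, for each defining relation $A_{\ell_e}(e)$ of $\A_\Gamma$ and each coset representative $u^i$, a rewritten relation $\tau(u^i\,A_{\ell_e}(e)\,u^{-i})$ in the Schreier generators. I would split into cases according to the edge $e=\{x,y\}$:
\begin{itemize}
\item If neither endpoint is $u$: rewriting $u^i(xy\cdots)^{\ell_e}\,u^{-i}$ simply replaces each occurrence of $x$ by $x_i$ and $y$ by $y_i$ (the $u^i$ conjugation commutes through formally), giving $A_{\ell_e}(x_i,y_i)$; combined with the reduction $v_i=v$ for $v\in V_{2,u}$, this yields families \eqref{rel-vv}, \eqref{rel-vw}, \eqref{rel-ww}.
\item If one endpoint is $u$, say $e=\{u,w\}$ with $w\in W$ (the case $w\in V_{2,u}$ is the degenerate $\ell_e=1$ case, already used above): here the conjugation genuinely interacts with the letters. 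Writing the word $(uw)^{\ell_e}$ and pushing the coset-representative function through, each block $uw$ contributes, after rewriting, a letter $w_j$ for the appropriate index $j$ and eventually a power of $\bar u=u^k$ every time the running exponent of $u$ wraps around modulo $k$. Tracking indices, one sees that $\tau(u^i(uw)^{\ell_e}(wu)^{-\ell_e}u^{-i})$ is precisely the relation $\langle w,\bar u\rangle_{i,\varepsilon}^{\ell_e}=\langle w,\bar u\rangle_{i+1,\varepsilon}^{\ell_e}$, with the $\varepsilon$-correction coming from whether the "tail" of length $r_e=\ell_e-c_ek$ starting at index $i$ overshoots $k-1$ (forcing an extra $\bar u$, i.e. $\varepsilon=1$) or not ($\varepsilon=0$); this is exactly the dichotomy $0\le i<k-r_e$ versus $i\ge k-r_e$ in the statement. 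These are the relations $B^i_{\ell_e,k}(w,\bar u)$.
\end{itemize}
I would also check that the trivial relations coming from the Schreier property (i.e. $\tau(u^i u u^{-(i+1)})=1$ for $i<k-1$ and $=\bar u$ for $i=k-1$) contribute nothing new beyond introducing $\bar u$.

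The main obstacle I expect is the second bullet: getting the index arithmetic in $\langle w,\bar u\rangle_{i,\varepsilon}^{\ell_e}$ exactly right — in particular verifying that the cyclic word of length $c_e(k+1)+r_e+\varepsilon$ described by the notation $\langle x,y\rangle_{i,\varepsilon}^l=(x_i\cdots x_{k-1}\,y\,x_0\cdots x_{i-1})^{c}x_i\cdots x_{i+r-1}x_{i+r}^\varepsilon$ is literally what Reidemeister--Schreier outputs when one alternately reads $u$ and $w$ a total of $\ell_e$ times each, starting from the coset $u^i$. The cleanest way to handle this is probably to track a single "cursor" recording the current coset exponent mod $k$ together with the accumulated number of full wraps, reading the alternating word $uwuw\cdots$ letter by letter: each $w$ read while the cursor is at residue $j$ emits $w_j$, and each time reading $u$ advances the cursor past $k-1$ it emits one $\bar u$. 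A short induction on $\ell_e$ (or directly on $c_e$ and $r_e$) then matches this emitted word to $\langle w,\bar u\rangle_{i,\varepsilon}^{\ell_e}$, and comparing the two sides for $(uw)^{\ell_e}$ versus $(wu)^{\ell_e}$ (which start the alternation with $w$ instead of $u$, shifting the cursor) gives the relation with subscripts $i$ and $i+1$ respectively. Finally I would invoke that Reidemeister--Schreier yields a \emph{complete} set of relations, so the listed relations $(R)$ and $(RB)$ present $\A_{\Gamma,u,k}$, and note that the reductions performed ($v_i\mapsto v$) are Tietze transformations, hence do not affect the isomorphism type.
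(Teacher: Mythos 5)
Your proposal is correct and follows essentially the same route as the paper: a direct Reidemeister--Schreier computation with the transversal $\{1,u,\dots,u^{k-1}\}$, followed by the elimination of the redundant generators $v_i$ for $v\in V_{2,u}$ via Tietze transformations. The paper's own proof is terser (it simply cites Reidemeister--Schreier and states the resulting generators and relations), whereas you spell out the index bookkeeping for the $B^i_{\ell_e,k}$ relations, but the underlying argument is identical.
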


\begin{proof}
The proof is a direct application of Reidemeister-Schreier's theorem (c.f.~\cite{Fox-free3}) 
to obtain a presentation of $\A_{\Gamma,u,k}$ as the kernel of $\alpha_{u,k}$
$$
\A_{\Gamma,u,k}\ \injmap{j}\ \A_{\Gamma}\ \rightmap{\alpha_{u,k}}\ \ZZ_k.
$$
Consider the Reidemeister's section $s:\ZZ_k\to \A_{\Gamma}$ of the map $\alpha_{u,k}$ given as
$s(i):=u^i$. Then $\A_{\Gamma,u,k}$ admits a presentation generated by the letters
$$\{\bar u\}\cup \bigcup_{v\in V}\{v_{0},...,v_{k-1}\},$$
where $j(\bar u)=u^k$ and $j(v_i)=u^ivu^{-i}$ whose relations are:
\begin{enumerate}
\item $A_{\ell_e}(v_i,w_i)$, for $v,w\in V$, $i\in \ZZ_k$, if $e=\{v,w\}\in E$,
\item $B^i_{\ell_e,k}(w,\bar u)$, for $i\in \ZZ_k$, if $v\in \lk(u)$.
\end{enumerate}
However, note that if $v\in V_{2,u}$, then $u^ivu^{-i}=v_i=v_j=u^jvu^{-j}$ which implies 
a reduction in the set of generators, which now becomes as stated:
$$\{\bar u\}\cup V_{2,u} \cup \bigcup_{w\in W}\{w_{0},...,w_{k-1}\}.$$
Finally, note that the only relations affected by this elimination of generators are
those of type $A_{\ell_e}(v_i,w_i)$ for $v\in V_{2,u}$, which now become $A_{\ell_e}(v,w_i)$, 
and those of type $B^i_{1,k}(v,\bar u)$ for $v\in V_{2,u}$, which now be reduced to 
$A_{1}(v,\bar u)$, as stated.
\end{proof}

\begin{obs}
\label{obs-l2}
Our purpose will be to study the characteristic varieties of the co-cyclic subgroups.
As presented in section~\ref{sec:charvar} these are subvarieties of $\Spec \CC[G/G']$,
for $G=\A_{\Gamma,u,k}$. First we will describe the abelianization of $\A_{\Gamma,u,k}$. 
Since $G$ is finitely presented consider $\FF\to G$ the map from the free group $\FF$ 
in the generators of $G$. The kernel $K$ of this homomorphism is a free subgroup generated
by the set of relations in $G$. Consider $G\ \rightmap{\Phi_G}\ G/G'$, $g\to t_g$ the 
abelianization map (with a multiplicative structure). According to Theorem~\ref{thm-pres} 
the abelianization $G/G'=\Phi_\FF(\FF)/\Phi_\FF(K)$ is generated by 
$$\{t_{\bar u}\}\cup \{t_v\}_{v\in V_{2,u}} \cup \bigcup_{w\in W}\{t_{w,{0}},...,t_{w,{k-1}}\},$$
where for convenience $t_{w,i}$ is used to denote~$t_{w_i}$.
Note that ($R$).\eqref{rel-uv}-\eqref{rel-ww} considered as words in the free group $\FF$ 
belong in fact to $\FF'$ and hence their image by the abelianitation map $\Phi_\FF$ is trivial.
On the other hand, the words $B_{\ell_e,k}^{i}(w,\bar{u})$, $w\in W\cap \lk(u)$ produce the 
following relations in homology:
\begin{equation}
\label{eq-homology}
\array{ll}
t_{w,{i}}=t_{w,{i+d_e}}=...=t_{w,{i+nd_e}} & \text{ if } e=\{u,w\}, d_e=\gcd(\ell_e,k),
\endarray
\end{equation}
\end{obs}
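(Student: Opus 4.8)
\textbf{Proof plan for Remark~\ref{obs-l2}.}
The plan is to compute the abelianization $G/G'$ of $G=\A_{\Gamma,u,k}$ directly from the presentation furnished by Theorem~\ref{thm-pres}. First I would recall the general principle that if $G=\langle x_1,\dots,x_N\mid S_1,\dots,S_M\rangle$ then $G/G'$ is the quotient of the free abelian group $\ZZ^N=\langle t_{x_1},\dots,t_{x_N}\rangle$ by the subgroup generated by the images $\Phi_\FF(S_j)$ of the relators. So the whole computation reduces to reading off, for each relator in the list ($R$) and ($RB$), what linear relation among the $t$'s it imposes.

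The next step, which is essentially bookkeeping, is to observe that every relator of type ($R$).\eqref{rel-uv}--\eqref{rel-ww} has the form $A_{\ell_e}(a,b)$, i.e. $(ab)^{\ell_e}=(ba)^{\ell_e}$, or $A_1(v,\bar u)$, i.e. $vb=bv$; such a word, written in the free group, reads $(ab)^{\ell_e}(ba)^{-\ell_e}$, whose total exponent in each generator is zero, hence it lies in $\FF'$ and maps to $0$ under $\Phi_\FF$. Thus these relators impose no condition on $G/G'$. The only relators contributing are the $B^i_{\ell_e,k}(w,\bar u)$ for $w\in W\cap\lk(u)$. Here I would unwind the definition of $\langle w,\bar u\rangle_{i,\varepsilon}^{\ell_e}$: abelianizing the equality $\langle w,\bar u\rangle_{i,\varepsilon}^{\ell_e}=\langle w,\bar u\rangle_{i+1,\varepsilon}^{\ell_e}$, the contribution of $\bar u$ cancels on both sides (the number of $\bar u$'s is the same, namely $c_e$), and comparing the multiplicities of each $w_j$ one finds that the two cyclic words, one ``starting at $w_i$'' and the other ``starting at $w_{i+1}$'', differ exactly by moving one occurrence from $w_i$ to $w_{i+r_e}$ (the role of $\varepsilon$ is precisely to account for the boundary index $k-r_e$ so that the count is correct in $\ZZ_k$). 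Hence the abelianized relator is $t_{w,i}=t_{w,i+r_e}$ in additive notation, for all $i\in\ZZ_k$.

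Finally I would package these relations. Running $i$ over $\ZZ_k$, the relations $t_{w,i}=t_{w,i+r_e}$ generate exactly the relations $t_{w,i}=t_{w,i+\gcd(r_e,k)}$, and since $\gcd(r_e,k)=\gcd(\ell_e,k)=:d_e$ (because $\ell_e=c_ek+r_e$), this is the asserted identification $t_{w,i}=t_{w,i+d_e}=\dots=t_{w,i+nd_e}$ of \eqref{eq-homology}. Consequently $G/G'$ is generated by $\{t_{\bar u}\}\cup\{t_v\}_{v\in V_{2,u}}\cup\bigcup_{w\in W}\{t_{w,0},\dots,t_{w,k-1}\}$ subject only to relations \eqref{eq-homology}; in particular for each $w\in W\cap\lk(u)$ only $d_e$ of the $k$ classes $t_{w,0},\dots,t_{w,k-1}$ survive, while for $w\in W\setminus\lk(u)$ all $k$ remain independent, and $G/G'$ is free abelian of the corresponding rank.

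\textbf{Main obstacle.} The only genuinely delicate point is the middle step: correctly abelianizing the words $\langle w,\bar u\rangle_{i,\varepsilon}^{\ell_e}$ and verifying that the case distinction on $\varepsilon$ (namely $\varepsilon=0$ for $0\le i<k-r_e$ and $\varepsilon=1$ otherwise) is exactly what makes the index shift $t_{w,i}\mapsto t_{w,i+r_e}$ come out consistently modulo $k$. I would handle this by writing $\ell_e=c_ek+r_e$ explicitly, noting that the $c_e$ full cyclic blocks $(w_iw_{i+1}\cdots \bar u\cdots)$ contribute $c_e$ to the multiplicity of every $w_j$ and cancel between the two sides, so that only the tail $w_i\cdots w_{i+r_e-1}w_{i+r_e}^\varepsilon$ matters; comparing tails for index $i$ versus $i+1$ then gives the single transposition of multiplicities claimed, and the boundary value of $\varepsilon$ is forced by requiring all indices to be read in $\ZZ_k$.
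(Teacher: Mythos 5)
Your proposal is correct and follows essentially the same route as the paper's remark: abelianize the standard presentation of Theorem~\ref{thm-pres}, note that the type ($R$) relators are balanced words lying in $\FF'$ and hence die in homology, and reduce each type ($RB$) relator to the index shift $t_{w,i}=t_{w,i+r_e}$, whence running over $i\in\ZZ_k$ gives $t_{w,i}=t_{w,i+\gcd(r_e,k)}=t_{w,i+d_e}$ as in \eqref{eq-homology}. The only minor inaccuracy is your parenthetical count of occurrences of $\bar u$ in $\langle w,\bar u\rangle_{i,\varepsilon}^{\ell_e}$, which is $c_e+\varepsilon$ rather than $c_e$ (compare the factor $p_{c_e+\varepsilon}$ in the $\bar u$-entry of Lemma~\ref{lemma-foxB}); since $\varepsilon$ is the same on both sides of $B^i_{\ell_e,k}(w,\bar u)$, the cancellation you invoke still holds and the conclusion is unaffected.
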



\begin{dfn}
The presentation described in Theorem~\ref{thm-pres} will be referred to as the 
\emph{standard presentation} of~$\A_{\Gamma,u,k}$.
\end{dfn}

\subsection{Fox calculus on the co-cyclic subgroups~$\A_{\Gamma,u,k}$}

\subsubsection{Fox derivatives of a standard presentation}
We want to describe the Fox derivatives of the relations of a standard presentation 
of the subgroup~$\A_{\Gamma,u,k}$.

The first set of relations of type $(R)$ in Theorem~\ref{thm-pres} are classical 
Artin relations. In order to describe their Fox derivatives we introduce the polynomial
$p_{l}(t)=\frac{t^l-1}{t-1}$ and as above, we denote by $t_g$ the homology class of 
an element~$g$. In the following results we present the Fox derivatives of certain 
relations of type $W_1=W_2$, by this we mean the derivative of the abstract 
word~$W_1W_2^{-1}$.

\begin{lem}\label{lemma-foxA}
Under the above conditions
$$\frac{\partial A_{\ell_e}(a,b)}{\partial g} = 
\begin{cases}
-(t_{b}-1)p_{\ell_e}(t_{a}t_{b}) & \text{ if } g=a\\
(t_{a}-1)p_{\ell_e}(t_{a}t_{b}) & \text{ if } g=b\\
0 & \text{ otherwise. }
\end{cases}
$$
\end{lem}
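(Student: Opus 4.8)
The plan is to compute the Fox derivative of the word $A_{\ell_e}(a,b) = (ab)^{\ell_e}\bigl((ba)^{\ell_e}\bigr)^{-1}$ directly from the defining axioms of the Fox calculus, exploiting the fact that under the abelianization map $\phi$ the letters $a,b$ map to their homology classes $t_a,t_b$, so $\phi((ab)^j) = (t_at_b)^j$ and $\phi((ba)^j)=(t_bt_a)^j=(t_at_b)^j$. First I would establish the auxiliary identity
\begin{equation*}
\frac{\partial (xy)^{\ell}}{\partial x} = p_\ell(t_xt_y),\qquad
\frac{\partial (xy)^{\ell}}{\partial y} = \frac{(t_xt_y)^\ell-1}{t_y-1}\cdot(t_y-1)\cdot\frac{1}{t_xt_y-1}\cdot\!\ldots
\end{equation*}
— more cleanly: using $\dfrac{\partial uv}{\partial g} = \dfrac{\partial u}{\partial g}+\phi(u)\dfrac{\partial v}{\partial g}$ repeatedly on the word $(xy)^\ell = \underbrace{xy\cdots xy}_{\ell}$, one gets $\dfrac{\partial (xy)^\ell}{\partial x} = \sum_{j=0}^{\ell-1}(t_xt_y)^j = p_\ell(t_xt_y)$ and $\dfrac{\partial (xy)^\ell}{\partial y} = \sum_{j=0}^{\ell-1}(t_xt_y)^j t_x = t_x\, p_\ell(t_xt_y)$, which I would prove by a one-line induction on $\ell$.

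Next I would handle the inverse factor. From $0 = \dfrac{\partial (w w^{-1})}{\partial g} = \dfrac{\partial w}{\partial g} + \phi(w)\dfrac{\partial w^{-1}}{\partial g}$ one obtains $\dfrac{\partial w^{-1}}{\partial g} = -\phi(w)^{-1}\dfrac{\partial w}{\partial g}$. Applying this with $w = (ba)^{\ell_e}$, so $\phi(w)^{-1} = (t_at_b)^{-\ell_e}$, gives $\dfrac{\partial (ba)^{-\ell_e}}{\partial a} = -(t_at_b)^{-\ell_e}\, t_b\, p_{\ell_e}(t_at_b)$ and $\dfrac{\partial (ba)^{-\ell_e}}{\partial b} = -(t_at_b)^{-\ell_e}\, p_{\ell_e}(t_at_b)$ (note the roles of $a,b$ are swapped relative to the previous paragraph since the base word is $ba$).

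Then I assemble: $\dfrac{\partial A_{\ell_e}(a,b)}{\partial g} = \dfrac{\partial (ab)^{\ell_e}}{\partial g} + \phi\bigl((ab)^{\ell_e}\bigr)\dfrac{\partial (ba)^{-\ell_e}}{\partial g} = \dfrac{\partial (ab)^{\ell_e}}{\partial g} + (t_at_b)^{\ell_e}\dfrac{\partial (ba)^{-\ell_e}}{\partial g}$. For $g=a$ this yields $p_{\ell_e}(t_at_b) + (t_at_b)^{\ell_e}\cdot\bigl(-(t_at_b)^{-\ell_e}t_b\,p_{\ell_e}(t_at_b)\bigr) = (1-t_b)p_{\ell_e}(t_at_b) = -(t_b-1)p_{\ell_e}(t_at_b)$; for $g=b$, symmetrically, $t_a\,p_{\ell_e}(t_at_b) - p_{\ell_e}(t_at_b) = (t_a-1)p_{\ell_e}(t_at_b)$; and for any other generator $g$ all derivatives vanish since $A_{\ell_e}(a,b)$ is a word in $a,b$ only. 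This matches the claimed formula.

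I do not expect a genuine obstacle here: the statement is a routine but slightly fiddly Fox-calculus computation, and the only place to be careful is bookkeeping of which variable plays the "first letter" role in $(ab)$ versus $(ba)$ (hence the asymmetry $\partial/\partial x$ giving $p_\ell$ but $\partial/\partial y$ giving $t_x p_\ell$), and the sign coming from the inverse. Writing the induction on $\ell$ cleanly and then substituting is all that is needed; there is no conceptual difficulty, so I would present the induction lemma and the three-case assembly and be done.
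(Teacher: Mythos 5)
Your computation is correct and is exactly the routine Fox-calculus argument the paper has in mind: the paper states Lemma~\ref{lemma-foxA} without proof, and the same two ingredients you use (the product rule applied inductively to $(ab)^{\ell}$ and the identity $\partial w^{-1}/\partial g=-\phi(w)^{-1}\,\partial w/\partial g$) are precisely the ones the authors deploy in their proof of the companion Lemma~\ref{lemma-foxB}. The bookkeeping of $p_{\ell}$ versus $t_x\,p_{\ell}$ and the sign from the inverse all check out, so nothing further is needed.
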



In order to describe the derivatives of relations of type $(RB)$, let us use some conventions:


\begin{align*}
& \bar{t}_{w,i,j}=
\begin{cases} 
t_{w,i}\cdots t_{w,j-1} & \text{ if } 0\leq i<j\leq k\\ 
1 & \text{ if } i=j \\
\bar t_{w}/\bar t_{w,j,i} & \text{ if } 0\leq j<i\leq k\\ 
\end{cases}\\
\end{align*}
where $t_{w,i}=t_{w_i}$ with $w_i=u^i wu^{-i}$, $t_{0}=t_{\bar u}$, and $\bar{t}_{w}=\bar{t}_{w,0,k}$.

\begin{lem}
\label{lemma-foxB}
Under the above conditions
$$\dfrac{\partial B_{\ell_e,k}^{i}(w,\bar{u})}{\partial g}=
\begin{cases}
\bar{t}_{w,i,k}(1-t_{w,i+r_{e}+\varepsilon}^{-1})p_{c_{e}+\varepsilon}(t_0\bar{t}_{w}) 
& \text{ if } g=\bar u\\
\bar{t}_{w,i,j}(1-t_{w,i+r_e+\varepsilon}^{-1})p_{c_e}(t_0\bar{t}_{w}) & \text{ if } g=w_j, j<i \\
\left(1-\dfrac{t_0\bar{t}_{w}}{t_{w,i+r_e+\varepsilon}}\right)
p_{c_e}(t_0\bar{t}_{w})+(t_0\bar{t}_{w})^{c_e} & \text{ if } g=w_i \\
\bar{t}_{w,i,j}(1-t_{w,i+r_e+\varepsilon}^{-1})p_{c_e+1}(t_0\bar{t}_{w}) & 
\text{ if } g=w_j, i< j < i+r_e+\varepsilon \\ 
\bar{t}_{w,i,j}(1-t_{w,i+r_e+\varepsilon}^{-1})p_{c_e}(t_0\bar{t}_{w}) - \dfrac{\bar{t}_{w,i,i+r_e+\varepsilon}}{t_{w,i+r_e+\varepsilon}}(t_0\bar{t}_{w})^{c_e}& 
\text{ if } g=w_j, i< j = i+r_e+\varepsilon \\ 
\bar{t}_{w,i,j}(1-t_{w,i+r_e+\varepsilon}^{-1})p_{c_e}(t_0\bar{t}_{w}) & 
\text{ if } g=w_j, j< i+r_e+\varepsilon .
\end{cases}
$$
\end{lem}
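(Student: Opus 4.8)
The plan is to prove Lemma~\ref{lemma-foxB} by a direct but carefully organized Fox-calculus computation on the word
$$B_{\ell_e,k}^{i}(w,\bar u)=\langle w,\bar u\rangle_{i,\varepsilon}^{\ell_e}\bigl(\langle w,\bar u\rangle_{i+1,\varepsilon}^{\ell_e}\bigr)^{-1},$$
using only the three defining properties of the Fox derivative together with the relation $\ell_e=c_ek+r_e$ that controls the shape of $\langle w,\bar u\rangle_{i,\varepsilon}^{\ell_e}$. First I would record once and for all the abelianized image under $\phi$ of each prefix of these words: every block $x_i\cdots x_{k-1}\bar u x_0\cdots x_{i-1}$ maps to $t_0\bar t_w$ (independently of $i$, since it is a full cyclic sweep), and a partial tail $x_i\cdots x_{j-1}$ maps to $\bar t_{w,i,j}$, with the convention introduced just before the statement. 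This is what makes the polynomials $p_{c_e}$, $p_{c_e+1}$, $p_{c_e+\varepsilon}$ appear: differentiating a product of $N$ identical-image factors $z\cdot z\cdots z$ with $\phi(z)=T$ contributes $(1+T+\cdots+T^{N-1})\,\partial z=p_N(T)\,\partial z$ by the Leibniz rule, exactly as in the classical computation of $\partial A_{\ell_e}$ in Lemma~\ref{lemma-foxA}.

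The core of the argument is then a bookkeeping step: write $\langle w,\bar u\rangle_{i,\varepsilon}^{\ell_e}=P_i^{c_e}\,Q_i$, where $P_i=x_i\cdots x_{k-1}\bar u x_0\cdots x_{i-1}$ is the cyclic block (image $t_0\bar t_w$) and $Q_i=x_i\cdots x_{i+r_e-1}x_{i+r_e}^{\varepsilon}$ is the leftover tail of length $r_e+\varepsilon$. Using $\partial(U V)=\partial U+\phi(U)\partial V$ repeatedly, and $\partial(W^{-1})=-\phi(W)^{-1}\partial W$, I would compute $\partial_g\langle w,\bar u\rangle_{i,\varepsilon}^{\ell_e}$ and $\partial_g\langle w,\bar u\rangle_{i+1,\varepsilon}^{\ell_e}$ separately as functions of which generator $g\in\{\bar u\}\cup\{w_j\}_{j\in\ZZ_k}$ one differentiates against, then subtract (with the appropriate $\phi$-twist from the inverse). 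The seven cases in the statement correspond precisely to the position of the index $j$ of $g=w_j$ relative to the window $[i,i+r_e+\varepsilon)$ and to $g=\bar u$: whether $w_j$ occurs only inside the cyclic blocks $P_i$ (giving a clean $p_{c_e}$ or $p_{c_e+1}$ factor from the $c_e$, resp.\ $c_e{+}1$, copies in the two words), whether it additionally occurs in the tail $Q_i$ (adding the extra $(t_0\bar t_w)^{c_e}$ term), or whether $j=i+r_e+\varepsilon$ is exactly the boundary index carrying the exponent $\varepsilon$ (producing the subtracted boundary term $\tfrac{\bar t_{w,i,i+r_e+\varepsilon}}{t_{w,i+r_e+\varepsilon}}(t_0\bar t_w)^{c_e}$). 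The factor $(1-t_{w,i+r_e+\varepsilon}^{-1})$ that is common to almost all cases arises from comparing the two words $\langle\cdot\rangle_{i,\varepsilon}$ and $\langle\cdot\rangle_{i+1,\varepsilon}$: shifting the starting point by one moves the letter $x_{i+r_e+\varepsilon}$ from one end of a block to the other, and its contribution to the difference of derivatives is weighted by $1$ on one side and by $\phi$ of a word ending in $x_{i+r_e+\varepsilon}$ on the other.

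Concretely I would proceed in this order: (1) fix notation for $P_i,Q_i$ and their $\phi$-images; (2) prove the auxiliary identity $\partial_g(Z^n)=p_n(\phi(Z))\,\partial_g Z$ for a word $Z$; (3) compute $\partial_g\langle w,\bar u\rangle_{i,\varepsilon}^{\ell_e}$ in closed form for each of $g=\bar u$ and $g=w_j$ (splitting on $j<i$, $i\le j<i+r_e+\varepsilon$, $j=i+r_e+\varepsilon$, and $j\ge i+r_e+\varepsilon$ — noting $\partial_g Q_i$ vanishes unless $j$ lies in the tail window); (4) do the same for the index $i+1$ word; (5) form $\partial_g\langle\cdot\rangle_{i,\varepsilon}-\phi\bigl(\langle\cdot\rangle_{i+1,\varepsilon}\bigr)^{-1}\!\cdot\partial_g\langle\cdot\rangle_{i+1,\varepsilon}$, using $\phi\bigl(\langle\cdot\rangle_{i+1,\varepsilon}^{\ell_e}\bigr)=(t_0\bar t_w)^{\ell_e}$ independent of the starting index, and simplify; (6) check the resulting expression matches each stated case, paying attention to the convention $\bar t_{w,i,j}=\bar t_w/\bar t_{w,j,i}$ when $j<i$ so that the $g=w_j,\,j<i$ line comes out in the displayed form.

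The main obstacle I expect is step (5)–(6): keeping the indices $i$, $i+1$, $r_e$, $\varepsilon$, and the wrap-around in $\ZZ_k$ consistent through the subtraction, and in particular handling the degenerate subcases where $r_e+\varepsilon\in\{0,1\}$ or where $i+r_e+\varepsilon$ wraps past $k$, so that $\varepsilon$ flips according to the rule $\varepsilon=0$ iff $0\le i<k-r_e$. These edge cases are where an off-by-one error would hide, and where one must check that the two words $\langle\cdot\rangle_{i,\varepsilon}^{\ell_e}$ and $\langle\cdot\rangle_{i+1,\varepsilon}^{\ell_e}$ genuinely have the asserted lengths $c_e(k+1)+r_e+\varepsilon$; once the length bookkeeping is nailed down, the algebra collapses to the seven displayed formulas. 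I would present the $g=\bar u$ and the generic $g=w_j$ interior case in detail and indicate that the boundary and exterior cases follow by the same mechanism with the extra tail term tracked.
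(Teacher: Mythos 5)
Your proposal is correct and follows essentially the same route as the paper: a direct Fox-calculus computation that writes $\langle w,\bar u\rangle_{i,\varepsilon}^{\ell_e}$ as $c_e$ copies of the cyclic block times the tail, uses the Leibniz and inverse rules to produce the $p_{c_e}$, $p_{c_e+1}$, $p_{c_e+\varepsilon}$ factors, and then subtracts the two shifted words case by case. The paper only writes out the sample case $g=w_i$ and declares the rest straightforward, so your more systematic case-by-case bookkeeping (including the wrap-around and $\varepsilon$-flip edge cases) is the same argument carried out in fuller detail.
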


\begin{proof}
The proof is straightforward. We will work out a sample case. Assume $g=w_i$, then
$$\dfrac{\partial B_{\ell_e,k}^{i}(w,\bar{u})}{\partial w_i}=\dfrac{\langle w,\bar{u}\rangle_{i,\varepsilon}^{\ell_e}
	(\langle w,\bar{u}\rangle_{i+1,\varepsilon}^{\ell_e})^{-1}}{\partial w_i}.$$

First let us calculate $\dfrac{\langle w,\bar{u}\rangle_{i,\varepsilon}^{\ell_e}}{\partial w_i}$. It is straight-forward that
$$\dfrac{\langle w,\bar{u}\rangle_{i,\varepsilon}^{\ell_e}}{\partial w_i}=p_{c_e}(t_0\bar{t}_w)+(t_0\bar{t}_w)^{c_e}$$
 
 $$\dfrac{\langle w,\bar{u}\rangle_{i+1,\varepsilon}^{\ell_e}}{\partial w_i}=\bar{t}_{w,i+1,i}p_{c_e}(t_0\bar{t}_w)$$
 
 Now, using the multiplication rule and
 $0=\frac{\partial uu^{-1}}{\partial v}=
 \frac{\partial u}{\partial v}+t_u\frac{\partial u^{-1}}{\partial v}$
 one obtains:
 
 $$\dfrac{(\langle w,\bar{u}\rangle_{i+1,\varepsilon}^{\ell_e})^{-1}}{\partial w_i}=-\dfrac{\bar{t}_{w,i+1,i}p_{c_e}(t_0\bar{t}_w)}{(t_0\bar{t}_w)^{c_e}\bar{t}_{w,i,i+r_e+\varepsilon}}.$$
 	
Therefore:

$$\dfrac{\partial B_{\ell_e,k}^{i}(w,\bar{u})}{\partial w_i}=p_{c_e}(t_0\bar{t}_w)+(t_0\bar{t}_w)^{c_e}+(t_0\bar{t}_w)^{c_e}\bar{t}_{w,i,i+r_e+\varepsilon}\dfrac{(\langle w,\bar{u}\rangle_{i+1,\varepsilon}^{\ell_e})^{-1}}{\partial w_i}=$$
$$=p_{c_e}(t_0\bar{t}_w)+(t_0\bar{t}_w)^{c_e}-(t_0\bar{t}_w)^{c_e}\bar{t}_{w,i,i+r_e+\varepsilon}\dfrac{(\bar{t}_{w,i+1,i})p_{c_e}(t_0\bar{t}_w)}{(t_0\bar{t}_w)^{c_e}\bar{t}_{w,i,i+r_e+\varepsilon}}=$$
$$=p_{c_e}(t_0\bar{t}_w)+(t_0\bar{t}_w)^{c_e}-\dfrac{(t_0\bar{t}_w)^{c_e}}{\bar{t}_{w,i,i+r_e+\varepsilon}}p_{c_e}=\left(1-\dfrac{t_0\bar{t}_{w}}{t_{w,i+r_e+\varepsilon}}\right)
p_{c_e}(t_0\bar{t}_{w})+(t_0\bar{t}_{w})^{c_e}.$$
\end{proof}

\subsubsection{Alexander matrices for co-cyclic subgroups of even Artin groups}
Given $\Gamma=(V,E,2\ell)$ an even labeled graph. Let us fix $u\in V$ and an integer $k>1$.
We will denote by $M_\Gamma$ (resp. $M_{\Gamma,u,k}$) the Alexander matrix associated with the Artin presentation of $\A_\Gamma$,
(resp. the standard presentation of $\A_{\Gamma,u,k}$ given in \S\ref{Subgroups}).
The purpose of this section is to describe some relevant properties of both~$M_\Gamma$ and~$M_{\Gamma,u,k}$.

Among these properties, the most relevant for our purposes refer to their rank. Note that, since these matrices have 
coefficients in a ring of Laurent polynomials~$R=\CC[\ZZ^m]$, a matrix $A\in \Mat(R)$ has rank at least $r$ if and only if 
there is a value $p=(t_1,\dots,t_m)\in \CC^m$ such that $A\otimes R/\mathfrak{m}_p\in \Mat(\CC)$ has an $r\times r$ non-zero minor,
where $\mathfrak{m}_p$ denotes the maximal ideal at~$p$. This operation will be called \emph{evaluating} and will be used
oftentimes to simplify notation.

\begin{lem}
The rank of the Alexander matrix $M_{\Gamma}$ defined above is exactly~$|V|-1$.
\end{lem}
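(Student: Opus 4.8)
The plan is to compute the rank of $M_\Gamma$ from both sides: first show it is at most $|V|-1$, then exhibit an evaluation point where an $(|V|-1)\times(|V|-1)$ minor is nonzero. The Alexander matrix $M_\Gamma$ is the $|E|\times|V|$ matrix of Fox derivatives of the classical Artin relations $A_{\ell_e}(u,v)$, $e=\{u,v\}\in E$, with entries in $R=\CC[\ZZ^{|V|}]=\CC[t_v^{\pm1}:v\in V]$. By Lemma~\ref{lemma-foxA}, the row of $M_\Gamma$ corresponding to $e=\{u,v\}$ has only two nonzero entries, namely $-(t_v-1)p_{\ell_e}(t_ut_v)$ in column $u$ and $(t_u-1)p_{\ell_e}(t_ut_v)$ in column $v$. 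For the upper bound: the vector $(t_v-1)_{v\in V}\in R^{|V|}$ is a syzygy, since each row dotted with it gives $-(t_v-1)p_{\ell_e}(t_ut_v)(t_u-1)+(t_u-1)p_{\ell_e}(t_ut_v)(t_v-1)=0$. This syzygy is nonzero (over the field of fractions of $R$ the entries $t_v-1$ are nonzero), so the columns of $M_\Gamma$ are linearly dependent over $\operatorname{Frac}(R)$, giving $\rank M_\Gamma\le |V|-1$. This is the geometrically meaningful dependence: $\uno=(1,\dots,1)$ always lies on every characteristic variety, reflecting that $H_1$ of the Artin group has rank $|V|$.

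For the lower bound I would evaluate at a suitably generic point $p=(t_v)_{v\in V}\in(\CC^*)^{|V|}$ and show the resulting complex matrix over $\CC$ has rank $|V|-1$. Concretely, pick values so that $t_v\ne1$ for all $v$ and $p_{\ell_e}(t_ut_v)\ne0$ for all $e$ (this excludes finitely many proper subvarieties, so a generic choice works); then every row of $M_\Gamma\otimes\CC$ is a nonzero multiple of the incidence-type vector $(t_u-1)\mathbf{1}_v-(t_v-1)\mathbf{1}_u$. After scaling row $e=\{u,v\}$ by $\bigl((t_u-1)(t_v-1)p_{\ell_e}(t_ut_v)\bigr)^{-1}$, the row becomes $\tfrac{1}{t_v-1}\mathbf{1}_v-\tfrac{1}{t_u-1}\mathbf{1}_u$, i.e. the signed incidence vector of the graph $(V,E)$ after the coordinate change $s_v:=1/(t_v-1)$. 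Since $\Gamma$ is connected (as a labeled graph it is implicitly connected; if not, one argues componentwise and the claim must be read per connected component, or one notes the statement as phrased presupposes connectivity), the rank of the signed incidence matrix of a connected graph on $|V|$ vertices is exactly $|V|-1$. Therefore $\rank(M_\Gamma\otimes\CC/\mathfrak m_p)=|V|-1$, hence $\rank M_\Gamma\ge|V|-1$, and combined with the upper bound we get equality.

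The step I expect to require the most care is the genericity argument for the lower bound, specifically checking that the excluded loci (where some $t_v=1$, or some cyclotomic-type factor $p_{\ell_e}(t_ut_v)$ vanishes) are proper, so that a common nonvanishing point exists; this is immediate here since each condition is a single nontrivial Laurent polynomial and $(\CC^*)^{|V|}$ is irreducible, so their union cannot be everything. A secondary point is the standing connectivity assumption on $\Gamma$: if one does not assume it, the correct statement is $\rank M_\Gamma=|V|-c$ where $c$ is the number of connected components, and the proof is identical applied to each component. Everything else — the syzygy computation and the reduction to the classical incidence matrix — is routine once Lemma~\ref{lemma-foxA} is in hand.
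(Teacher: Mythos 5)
Your argument is correct, and it differs from the paper's on both halves. For the upper bound the paper completes $\Gamma$ to a complete labeled graph $\bar\Gamma$ (adding label-$2$ edges) and shows that for each triangle $\{v,w,w'\}$ the three corresponding rows satisfy a linear relation, so that the rows through a fixed vertex span all of $M_{\bar\Gamma}$; you instead exhibit the single column syzygy $\sum_{v\in V}(t_v-1)C_v=0$, which is more direct, needs no auxiliary graph, and makes transparent why the rank drop is tied to the trivial character $\uno$. For the lower bound the paper keeps things symbolic: it takes the row submatrix $M_T$ indexed by a maximal tree $T$ and observes it already has rank $|V|-1$; you specialize at a generic point (where no $t_v=1$ and no $p_{\ell_e}(t_ut_v)=0$) and identify the resulting matrix, up to invertible row and column scalings, with the signed incidence matrix of $(V,E)$, whose rank is $|V|-1$ for a connected graph. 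The two lower bounds carry essentially the same content, since the incidence-matrix rank formula is itself proved via spanning trees, but yours has the merit of making explicit the standing connectivity hypothesis: as you note, the lemma is literally false for disconnected $\Gamma$ (e.g.\ $\overline K_2$ gives an empty matrix of rank $0$), and the paper's proof uses connectivity just as silently through the phrase ``maximal tree'' with $|V|-1$ edges. Your genericity check is also sound: each excluded locus is the zero set of a nontrivial Laurent polynomial, so their union is proper in $(\CC^*)^{|V|}$.
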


\begin{proof}
Consider $M_T$ the row submatrix of $M_\Gamma$ given by the $|V|-1$ relations determined by the edges of 
a maximal tree $T$ in $\Gamma$. Since $M_T$ clearly has rank $|V|-1$, the matrix $M_{\Gamma}$ has rank at least~$|V|-1$.

To see the equality, consider $\bar \Gamma=(V,\bar E,2\bar \ell)$ the completion of the 
graph $\Gamma$ obtained from $\Gamma$ by adding an edge of label 2 for every pair of disconnected vertices.
The matrix $M_{\bar{\Gamma}}$ associated with this graph contains $M_\Gamma$ as a submatrix.
Choose any vertex $v\in V$, we will show that the $|V|-1$ rows associated to the relations involving $v$ 
generate the remaining rows.

Consider $e=\{w,w'\}\in \bar E$, using Lemma~\ref{lemma-foxA}, the row $f_{e}$ associated with the classical Artin 
relation $A_{\bar{\ell}_{e}}(w,w')$ has the form:
\begin{equation}\label{eq-row} 
p_{\bar\ell_{e}}(t_{w}t_{w'})
\begin{array}{ccccccccccc}
(0 & ...& 0& (1-t_{w'})& 0& ... &0& (t_{w}-1)& 0& ... &0)
\end{array}
\end{equation}
where the non-zero elements are at the columns corresponding to the vertices $w$ and $w'$ respectively.

Note that, since $\bar\Gamma$ is a complete labeled even graph, the three vertices $v,w,w'\in \bar V=V$ form 
a triangle, that is, $e=\{w,w'\}$, $e_1=\{v,w\}$, $e_2=\{v,w'\}$. Moreover, the rows $f_e$, $f_{e_1}$, and $f_{e_2}$ satisfy
the following linear combination:
$$\dfrac{(t_{v}-1)}{p_{\bar\ell_{e}}(t_wt_{w'})}f_{e}+
\dfrac{(t_{w'}-1)}{p_{\bar\ell_{e_1}}(t_vt_{w})}f_{e_1}+
\dfrac{(t_{w}-1)}{p_{\bar\ell_{e_2}}(t_vt_{w'})}f_{e_2}=0.
$$
Thus, $M_{\bar{\Gamma}}$ has rank less than or equal to $|V|-1$. 
Since $M_{\Gamma}$ is a submatrix of $M_{\bar{\Gamma}}$ the result follows.	
\end{proof}

\begin{notation}\label{not-gammauk}
Recall from Theorem~\ref{thm-pres} that the generators of a standard presentation 
of $\A_{\Gamma,u,k}$ can be distinguished in three type groups $\{\bar u\}\cup V_{2,u}\cup W_{k,u}$, 
where 
$$V_{2,u}=\{v\in V\mid e=\{u,v\}\in V, \ell_e=1\}$$
and 
$$W_{k,u}=\{w_{i,j} \mid w_i\in W=V\setminus (\{\bar u\}\cup V_{2,u}), j\in \ZZ_k\}.$$
In the sequel, the elements in $V_{2,u}$ will be denoted by $v_1,...,v_m$,
where $m$ is the number of vertices adjacent to $u$ with label $2$.
Analogously, the elements of $W_{k,u}$ will be denoted by $w_{i,j}$, for 
$w_i\in W$, where $1\leq i\leq n= |V|-m-1$ and $j\in \ZZ_k$.
\end{notation}


From the results of the two previous sections, we immediately obtain the following 
description of the Alexander matrix~$M_{\Gamma,u,k}$.

\begin{lem}
\label{lem-Muk}
The Alexander matrix $M_{\Gamma,u,k}$  of $\A_{\Gamma,u,k}$ associated with 
its standard presentation has the following form:
$$
\begin{blockarray}{ccccccccccccc}
w_{*,0} & & w_{*,1}& & ... & & w_{*,k-1}  & & v_{1} & ... & v_{m} & & \bar u \\
\begin{block}{(ccccccccccccc)}
0 & \vline & 0 & \vline & ... & \vline & 0 & \vline & & A_k & & \vline   & 0 \\ 
\cline{1-13}
A'_0 & \vline & 0 & \vline & ... & \vline & 0 & \vline &  & A_0 &  &  \vline  & 0 \\ 
\cline{1-13}
0 & \vline & A'_1 & \vline & ... & \vline & 0 & \vline &  &A_1 &  &  \vline  & 0\\ 
\cline{1-13}
... & & ...& & ...& &...& & ...&... &... && ...  \\
\cline{1-13}
0 & \vline & 0 & \vline & ... & \vline & A'_{k-1} & \vline &  & A_{k-1} &  &  \vline  & 0 \\ 
\cline{1-13}
&  \vline & & \vline & & \vline &  &  \vline  & t_{\bar u}-1 & & & \vline & 1-t_{v_1}  \\ 
0 & \vline  & 0 & \vline & 0 & \vline & ... & \vline  & & \ddots & & \vline & \vdots\\
& \vline  & & \vline  & & \vline  & & \vline & &  &t_{\bar u}-1 & \vline&  1-t_{v_m}\\
\cline{1-13}
\\ 
& & & & & 
& &
M_{B}
& & & & & \\
\\
\end{block}
\end{blockarray}
$$
where:
\begin{enumerate}
\item 
$w_{*,j}$ denotes the set of columns associated with all the generators of type $w_{i,j}\in W_{k,u}$ 
for a fixed $j\in \ZZ_k$, with $w_i\in W$, as in Notation~\ref{not-gammauk}.
\item $A_k$ is the Alexander matrix corresponding with relations of type $R$\eqref{rel-vv} in 
Theorem~\ref{thm-pres} with respect to the generators $\{v_1,\dots,v_m\}$.
\item the submatrices $A'_j$ and $A_j$ are so that the matrix $(A'_j \ \ \vline\ \ A_j)$
is the Alexander matrix of the relations of type $R$\eqref{rel-vw} in Theorem~\ref{thm-pres}
with respect to the generators $\{w_{*,j},v_1,\dots,v_m\}$ , i.e. their rows are of the form:
$$f_{a,b}\equiv p_{c_{ab}}(t_at_b) \begin{array}{ccccccccccc}
(0 & ...& 0& (1-t_{b})& 0& ... &0& (t_{a}-1)& 0& ... &0)
\end{array}$$
for $a=v_l\in V_{2,u}$ and $b=w_{i,j}\in W_{k,u}$.
\item The submatrix $M_B$ is the Alexander matrix associated with the relations
of type $(RB)$ in Theorem~\ref{thm-pres}. Note that this is a block matrix whose
blocks are the submatrices $M_{B(w,u)}$ associated with the relations of type 
$B^i_{\ell_e,k}(w,\bar u)$, for $i\in \ZZ_k$ and~$\{u,w\}\in E$.
\end{enumerate}
\end{lem}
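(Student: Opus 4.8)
\textit{Proof sketch.} The plan is to assemble $M_{\Gamma,u,k}$ directly from the standard presentation of Theorem~\ref{thm-pres}, computing the Fox derivative of each relation and reading off its support among the generators $\{\bar u\}\cup V_{2,u}\cup W_{k,u}$ of Notation~\ref{not-gammauk}. Fix once and for all the column order $w_{*,0},\dots,w_{*,k-1},v_1,\dots,v_m,\bar u$, and order the rows by relation type: first the relations $R$\eqref{rel-vv}, then for each level $j\in\ZZ_k$ the relations $R$\eqref{rel-vw} and $R$\eqref{rel-ww} at level $j$, then the relations $R$\eqref{rel-uv}, and finally the relations $(RB)$. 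With these choices each block of the asserted matrix is identified by a short support computation, and the claimed shape follows by collecting the four groups of rows in this order.

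First I would dispose of the relations of type $(R)$, which are ordinary Artin relations, so Lemma~\ref{lemma-foxA} applies: the derivative of $A_{\ell_e}(a,b)$ is supported only on the columns indexed by $a$ and $b$. Consequently the relations $R$\eqref{rel-vv}, which involve only pairs from $V_{2,u}$, contribute rows supported on the $v_1,\dots,v_m$ columns, forming the block $A_k$ of item (2). The relation $A_{\ell_e}(v,w_i)$ of type $R$\eqref{rel-vw} involves $v=v_l\in V_{2,u}$ and the generator $w_i=u^iwu^{-i}$, which is a column of $w_{*,i}$; hence these rows are supported on the $v$-columns and on $w_{*,i}$, and together (over all $w\in W$) span $\{w_{*,i},v_1,\dots,v_m\}$ with rows of the stated form $f_{a,b}$ — this is $(A'_i\mid A_i)$ — while the relations $A_{\ell_e}(w_i,w'_i)$ of type $R$\eqref{rel-ww} are supported only on $w_{*,i}$ and so add rows lying in the $i$-th block row with zero $v$-part; in either case the entries vanish on $\bar u$ and on $w_{*,j}$ for $j\neq i$, giving the rows $(0\mid\cdots\mid A'_i\mid\cdots\mid A_i\mid 0)$. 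Finally $A_1(\bar u,v_l)$ of type $R$\eqref{rel-uv} has, by Lemma~\ref{lemma-foxA} with $p_1\equiv 1$, derivative $t_{\bar u}-1$ in the $v_l$-column and $1-t_{v_l}$ in the $\bar u$-column and zero elsewhere, yielding exactly the displayed block with $(t_{\bar u}-1)$ along the diagonal of the $v$-columns and the column $(1-t_{v_1},\dots,1-t_{v_m})^\top$ under $\bar u$.

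It remains to place the rows of the relations $B^i_{\ell_e,k}(w,\bar u)$ of type $(RB)$, and here I would invoke Lemma~\ref{lemma-foxB}: running through the cases listed there, $\partial B^i_{\ell_e,k}(w,\bar u)/\partial g$ is nonzero only when $g=\bar u$ or $g$ is one of the generators $w_j$, $j\in\ZZ_k$, i.e.\ only on the column $\bar u$ and on the columns attached to the various levels of the \emph{single} vertex $w$; in particular these rows vanish on every $v_l$-column and on every column of $w'_{*}$ with $w'\neq w$. Hence the submatrix $M_B$ of all $(RB)$-rows is, away from the shared column $\bar u$, block-diagonal with one block $M_{B(w,u)}$ for each edge $\{u,w\}\in E$ with $w\in W\cap\lk(u)$, as in item (4). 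The computations are entirely mechanical once Lemmas~\ref{lemma-foxA} and \ref{lemma-foxB} are available; the only points needing care are the bookkeeping between the generator $w_i=u^iwu^{-i}$ and its level index in $W_{k,u}$ (so each level-$i$ relation of type $R$\eqref{rel-vw}--\eqref{rel-ww} lands in the $i$-th block row), the orientation convention used in writing $A_1(\bar u,v_l)$ so that the signs of Lemma~\ref{lemma-foxA} match the displayed block, and the verification that the implicit ``otherwise'' case of Lemma~\ref{lemma-foxB} really exhausts the remaining generators. This last check — the one place where one must look at the structure of $B^i_{\ell_e,k}(w,\bar u)$ rather than merely quote a formula — is the mild technical heart of the argument, since it is what forces $M_B$ to be block-diagonal. \qed
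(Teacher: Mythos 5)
Your proposal is correct and follows exactly the route the paper intends: the paper offers no written proof, stating only that the lemma follows ``immediately'' from Theorem~\ref{thm-pres} together with Lemmas~\ref{lemma-foxA} and~\ref{lemma-foxB}, and your support-by-support bookkeeping of the Fox derivatives of each relation type is precisely that argument made explicit. Your added care with the sign convention in $A_1(\bar u,v_l)$ and with where the $R$\eqref{rel-ww} rows land is a harmless refinement of what the paper leaves implicit.
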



\begin{lem}\label{PArank}
The submatrix $M_{B(w,u)}$ has maximal rank.
\end{lem}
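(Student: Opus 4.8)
The plan is to use the criterion recalled just before the lemma: a matrix over $R=\CC[\ZZ^{m}]$ has rank $\geq r$ if and only if some evaluation $R\to\CC$ turns it into a complex matrix with a non-zero $r\times r$ minor. Since $M_{B(w,u)}$ has exactly $k$ rows — the relations $B^{i}_{\ell_e,k}(w,\bar u)$, $i\in\ZZ_{k}$ — it suffices to produce a non-vanishing $k\times k$ minor at one point, and I would look for it among the $k$ columns indexed by the conjugates $w_{0},\dots,w_{k-1}$ of $w$. The point $p$ I would use is the one with $t_{w,j}=1$ for all $j\in\ZZ_{k}$ (which is consistent with the homology relations~\eqref{eq-homology}), $t_{\bar u}=\tau$ for some fixed $\tau\in\CC^{*}$ that is not a root of unity, and all remaining coordinates of $\Spec\Lambda^{\CC}$ equal to $1$.

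First I would record the shape of the $(RB)$-relations coming from the Reidemeister--Schreier computation behind Theorem~\ref{thm-pres}: for the edge $e=\{u,w\}$, the relation $B^{i}_{\ell_e,k}(w,\bar u)$ is, up to reversing the two sides of the equality and/or a cyclic permutation of the relator word (operations that multiply the corresponding row of the Alexander matrix by a unit of $\Lambda$, hence do not change its rank), the relation $w_{i+1}w_{i+2}\cdots w_{i+\ell_e}=w_{i}w_{i+1}\cdots w_{i+\ell_e-1}$, where for $m\geq k$ one sets $w_{m}=\bar u^{\lfloor m/k\rfloor}\,w_{m\bmod k}\,\bar u^{-\lfloor m/k\rfloor}$. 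Writing $\nabla(\cdot)$ for the Fox gradient (the corresponding row of the Alexander matrix) and $e_{w_{j}}$ for the standard basis vector of the $w_{j}$-column, at $p$ one has $\nabla w_{m}=\tau^{\lfloor m/k\rfloor}e_{w_{m\bmod k}}$ (the $\bar u$-component telescopes away), and since $B^{i}$ equates two words with the same abelianisation its Fox gradient at $p$ is $\nabla w_{i+\ell_e}-\nabla w_{i}$; thus the $\bar u$-column of $M_{B(w,u)}\otimes\CC$ vanishes and, on the $w$-columns, the $i$-th row is $\tau^{c_{i}}e_{w_{\sigma(i)}}-e_{w_{i}}$ with $\sigma(i)=(i+\ell_e)\bmod k$ and $c_{i}=\lfloor(i+\ell_e)/k\rfloor\geq 0$. (The same collapse can also be read off directly from Lemma~\ref{lemma-foxB} once $t_{w,j}=1$.)

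It then remains to evaluate a structured determinant. The $k\times k$ submatrix $N$ on the $w$-columns equals $D\,\Pi_{\sigma}-\mathrm{Id}_{k}$, where $\Pi_{\sigma}$ is the permutation matrix of the translation $\sigma\colon i\mapsto i+\ell_e$ on $\ZZ_{k}$ and $D=\mathrm{diag}(\tau^{c_{i}})$. The permutation $\sigma$ splits into $d:=\gcd(\ell_e,k)$ orbits, each of length $k/d$; reordering rows and columns by the orbit decomposition block-diagonalises $N$, and on each cyclic block the determinant of (a weighted cyclic shift) $-\,\mathrm{Id}$ is $\pm\bigl(1-\prod(\text{weights})\bigr)$, so that
$$\det N=\pm\prod_{\text{orbits }O}\Bigl(1-\prod_{i\in O}\tau^{c_{i}}\Bigr)=\pm\bigl(1-\tau^{\ell_e/d}\bigr)^{d},$$
the last equality because $\sum_{i\in O}c_{i}=\tfrac1k\sum_{i\in O}\bigl(i+\ell_e-\sigma(i)\bigr)=\tfrac{|O|\,\ell_e}{k}=\ell_e/d$ for every orbit $O$ (here $\sum_{i\in O}i=\sum_{i\in O}\sigma(i)$ since $\sigma$ permutes $O$). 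As $\ell_e\geq 2$ for every edge contributing an $(RB)$-relation, $\ell_e/d\geq 1$, and since $\tau$ is not a root of unity $\det N\neq 0$. Hence $\rank M_{B(w,u)}\geq k$, and as $M_{B(w,u)}$ has only $k$ rows, $\rank M_{B(w,u)}=k$, i.e. maximal.

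The main obstacle is the bookkeeping in the middle step: pinning down exactly what the $i$-th row of $M_{B(w,u)}$ becomes at $p$ — either by tracking the Reidemeister--Schreier rewriting of the $(RB)$-relators, paying attention to where the letters $\bar u$ are inserted via the identifications $w_{m+k}=\bar u\,w_{m}\,\bar u^{-1}$, or by going through the several cases of Lemma~\ref{lemma-foxB} and checking that after $t_{w,j}=1$ only the $w_{i}$- and $w_{\sigma(i)}$-entries remain (with the correct exponent $c_{i}$ on $\tau$). Everything downstream is the routine ``identity minus a weighted permutation'' determinant together with the elementary carry-count $\sum_{i\in O}c_{i}=\ell_e/d$, which is what forces the cyclotomic factors to be non-trivial.
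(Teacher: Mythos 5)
Your proof is correct, and it follows the same basic strategy as the paper's --- specialize all the $t_{w,j}$ to $1$ and show the resulting matrix over $\CC[t_{\bar u}^{\pm 1}]$ is nonsingular --- but the endgame is genuinely different and arguably cleaner. The paper splits into two cases: for $\ell_e\equiv 0\bmod k$ it evaluates only $t_{w,0}=\dots=t_{w,k-2}=1$ and exhibits a triangular shape, and for $\ell_e\not\equiv 0\bmod k$ it evaluates all $t_{w,j}=1$, formally sets $t_{\bar u}=0$ (a point off the torus) and invokes a perturbation to small nonzero $t_{\bar u}$. You instead treat both cases at once by recognizing the specialized matrix as $D\Pi_\sigma-\mathrm{Id}$ for the translation $\sigma(i)=i+\ell_e$ on $\ZZ_k$ and computing the determinant exactly as $\pm\bigl(1-\tau^{\ell_e/d}\bigr)^{d}$ via the orbit decomposition and the carry count $\sum_{i\in O}c_i=\ell_e/d$; choosing $\tau$ not a root of unity then finishes uniformly (and your formula correctly reproduces the paper's case $r_e=0$, where $\sigma=\mathrm{id}$ and the matrix is $(\tau^{c_e}-1)\mathrm{Id}$). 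What your route buys is a single computation with an explicit cyclotomic determinant, at the cost of the bookkeeping you flag: you must check that your Reidemeister--Schreier form of the relator $w_{i+1}\cdots w_{i+\ell_e}=w_i\cdots w_{i+\ell_e-1}$ agrees with the paper's $B^i_{\ell_e,k}$ up to conjugation and inversion (it does, and these operations only multiply the row by a unit of $\Lambda$), and that the specialization of Lemma~\ref{lemma-foxB} indeed leaves only the $w_i$- and $w_{\sigma(i)}$-entries with weight $\pm\tau^{c_i}$, $c_i=\lfloor(i+\ell_e)/k\rfloor$ --- which matches the paper's displayed matrix in the second case up to an overall sign.
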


\begin{proof}
As was mentioned above, we are assuming $\{u,w\}\in E$.
Let us distinguish two cases depending on whether or not $\ell_e$ is a multiple of~$k$.
\begin{enumerate}
\item Assume $\ell_e\equiv 0 \mod k$. Using Lemma~\ref{lemma-foxB} and evaluating
$t_{w,0}=t_{w,1}=...=t_{w,k-2}=1$ in $M_{B(w,u)}$ the following upper triangular matrix
is obtained:
$$
M=\begin{blockarray}{cccccc}
 w_0 & w_1 & ... & w_{k-2} & w_{k-1} & \bar u \\
\begin{block}{(ccccc|c)}
 t_{\bar u}-1 & t_{\bar u}-1 & ... & t_{\bar u}-1 & t_{\bar u}-1 & 1-t_{w,k-1} \\
 1-t_{w,k-1}t_{\bar u} & 0 & ... & 0 & 0 & 0\\
 0 & 1-t_{w,k-1}t_{\bar u} & ... & 0 & 0 & 0\\
 0 & 0 & \ddots & \vdots & \vdots & \vdots \\
0&0&...& 1-t_{w,k-1}t_{\bar u} & 0 & 0 \\
\end{block}
\end{blockarray}
$$		 
which has maximal rank.
\item Assume $\ell_e\not\equiv 0 \mod k$. Write $\ell_e=c_ek+r_e$, with $0<r_e<k$. 
Analogously to the previous case, using Lemma~\ref{lemma-foxB} and evaluating now at 
$t_{w,0}=t_{w,1}=...=t_{w,k-2}=t_{w,k-1}=1$, the following matrix is obtained:
$$
M=\begin{blockarray}{cccccccccc}
w_0 & ... & w_{r-1} & w_{r} & ... & w_{k-r-1} & w_{k-r}& ... & w_{k-1} & \bar u\ \\
\begin{block}{(ccccccccc|c)}
1 & & & -t_{\bar u}^{c} & & & & & &0\\
& \ddots & &  & \ddots & & & & &\vdots\\
&  &\ddots &  &  &\ddots & & & &\vdots\\
&  && \ddots  &  & &\ddots & & &\vdots\\
&  &&   & \ddots & & &\ddots & &\vdots\\
& & &  & & 1 & & & -t_{\bar u}^{c}&\\
\cline{1-10}
-t_{\bar u}^{c+1}  & & &  & &  &1 & & &\vdots\\
& \ddots & &  & & & &\ddots & &\\
& &-t_{\bar u}^{c+1} &  & &  & & &1 &0\\
\end{block}
\end{blockarray}
$$
Formally, $t_{\bar u}=0$ produces a matrix of maximal rank and hence the result follows
using small enough values of $t_{\bar u}$.
\end{enumerate}
\end{proof}

\begin{obs}\label{PAmatrixrep}
Note that, in the previous Lemma, the submatrix of $M_{B(w,\bar u)}$ resulting 
from deleting the column $\bar u$ has a maximal rank.
Therefore, in order to study the rank of $M_{\Gamma,u,k}$, and after row operations, one 
can assume that $M_{B(w,\bar u)}$ is equivalent to:
$$
\begin{blockarray}{cccccc}
 w_{0} & w_{1} & ... & w_{k-2}& w_{k-1} & \bar u\ \\
\begin{block}{(ccccc|c)}
* &* & \hdots & * & * & *\\
 0 & * & \hdots & *& *& *\\
 0 & \ddots & \ddots& * & \vdots& \vdots\\
 0 & 0 & \ddots& \ddots & \vdots & \vdots\\
 0 & 0 & \hdots & 0& * & * \\
\end{block}
\end{blockarray}
$$
\end{obs}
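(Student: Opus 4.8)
The plan is to prove two assertions: \textbf{(a)} the $k\times k$ submatrix $N_w$ of $M_{B(w,\bar u)}$ obtained by deleting the column labelled $\bar u$ has maximal rank $k$; and \textbf{(b)} as a consequence, $M_{B(w,\bar u)}$ is row-equivalent to the displayed upper triangular shape. Point (b) will follow formally from (a): once $N_w$ is invertible over the fraction field of $R=\CC[\ZZ^m]$, Gaussian elimination on the $k$ rows of $M_{B(w,\bar u)}$ (permuting rows to choose nonzero pivots) turns $N_w$ into an upper triangular matrix with nonzero diagonal entries, and the same operations carry the $\bar u$ column into some column, producing precisely the matrix in the statement; since $M_{B(w,\bar u)}$ is a diagonal block of $M_B$ inside $M_{\Gamma,u,k}$ by Lemma~\ref{lem-Muk} and row operations do not change rank, one may genuinely assume this shape when computing $\rank M_{\Gamma,u,k}$. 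So the real content is the rank computation for $N_w$, and the idea is to reuse the two cases and the evaluations of $t_{w,j}$ already performed in the proof of Lemma~\ref{PArank}, then compute the determinant of the block occupying the columns $w_0,\dots,w_{k-1}$.

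In the case $\ell_e\equiv 0\bmod k$, after the evaluation $t_{w,0}=\dots=t_{w,k-2}=1$ the $w$-columns block is the matrix whose first row is $(t_{\bar u}-1,\dots,t_{\bar u}-1)$ and whose other $k-1$ rows, suitably ordered, form the diagonal matrix with diagonal entries $1-t_{w,k-1}t_{\bar u}$; expanding along the column $w_{k-1}$ (whose only nonzero entry is $t_{\bar u}-1$) gives determinant $\pm(t_{\bar u}-1)(1-t_{w,k-1}t_{\bar u})^{k-1}$, a nonzero element of $R$. In the case $\ell_e=c_ek+r_e$ with $0<r_e<k$, after the evaluation $t_{w,0}=\dots=t_{w,k-1}=1$ the $w$-columns block has the form $I-Q$, where $Q$ carries the entry $t_{\bar u}^{c_e}$ at position $(i,i+r_e)$ when $i<k-r_e$ and $t_{\bar u}^{c_e+1}$ at position $(i,i+r_e-k)$ when $i\geq k-r_e$, by Lemma~\ref{lemma-foxB}. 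Reordering rows and columns along the $d_e=\gcd(r_e,k)$ orbits of the permutation $i\mapsto i+r_e$ on $\ZZ_k$ block-diagonalizes $I-Q$ into $d_e$ cyclic blocks of size $k/d_e$; traversing one orbit adds $r_e$ a total of $k/d_e$ times, hence picks up exactly $r_e/d_e$ wrap-arounds, so the product of the $Q$-entries around the orbit is $t_{\bar u}^{(k/d_e)c_e+r_e/d_e}=t_{\bar u}^{\ell_e/d_e}$ and each block has determinant $1-t_{\bar u}^{\ell_e/d_e}$. Thus $\det(I-Q)=\pm(1-t_{\bar u}^{\ell_e/d_e})^{d_e}$, again a nonzero element of $R$. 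In either case the $k\times k$ minor of $M_{B(w,\bar u)}$ indexed by the columns $w_0,\dots,w_{k-1}$ is a nonzero Laurent polynomial, so $N_w$ has rank $k$, proving (a); then (b) is the routine reduction described above.

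The main obstacle is the determinant computation in the second case: one must recognize the block-circulant structure induced by the orbits of $i\mapsto i+r_e$ on $\ZZ_k$, count correctly how many wrap-arounds (namely $r_e/d_e$) occur around each orbit versus the $k/d_e$ diagonal ones, and assemble these into the clean per-block formula $1-t_{\bar u}^{\ell_e/d_e}$. Once that is in hand the non-vanishing is immediate, the maximal-rank statement for $N_w$ follows, and the reduction of $M_{B(w,\bar u)}$ to the displayed triangular form is elementary linear algebra over the fraction field of $R$.
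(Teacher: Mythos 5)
Your argument is correct and follows essentially the same route as the paper: evaluate the variables $t_{w,j}$ at $1$ as in the proof of Lemma~\ref{PArank}, check that the resulting block on the columns $w_0,\dots,w_{k-1}$ is nonsingular, and then reduce to the triangular form by Gaussian elimination over the fraction field. The only divergence is local: where the paper certifies nonsingularity in the case $r_e\neq 0$ by formally setting $t_{\bar u}=0$ and invoking small nonzero values, you compute the determinant exactly as $\pm\bigl(1-t_{\bar u}^{\ell_e/d_e}\bigr)^{d_e}$ via the cycle decomposition of $i\mapsto i+r_e$ on $\ZZ_k$, which is a correct (and more informative) way to reach the same conclusion.
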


Recall that the corank of a matrix $M$, is defined as 
$$\corank(M)=\#\text{columns}(M)-\rank(M).$$
Then one has the following result on the corank of~$M_{\Gamma,u,k}$.

\begin{lem}\label{rankAM}
Under the conditions above $\corank (M_{\Gamma,u,k})\leq 1$.
\end{lem}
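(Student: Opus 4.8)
The statement $\corank(M_{\Gamma,u,k})\le 1$ is equivalent to $\rank(M_{\Gamma,u,k})\ge \#\text{columns}-1$, so the plan is to produce, for a generic evaluation point $p\in(\CC^*)^N$, a square submatrix of size $(\#\text{columns}-1)$ whose determinant does not vanish identically. The columns are indexed by $\{w_{*,0},\dots,w_{*,k-1}, v_1,\dots,v_m,\bar u\}$, so I need to exhibit that many independent rows. First I would isolate the contribution of each ``$(RB)$-block'': by Lemma~\ref{PArank} and Remark~\ref{PAmatrixrep}, for each $w\in W\cap\lk(u)$ the block $M_{B(w,\bar u)}$, after row operations, is upper-triangular in the columns $w_{0},\dots,w_{k-1}$ with nonzero diagonal entries after deleting the $\bar u$ column — hence it already accounts, with full rank, for all $k$ columns $w_{*,j}$ associated with that fixed $w$ (restricted to the relevant coordinate), \emph{not} touching the $\bar u$ column in an essential way. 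So the $(RB)$-relations pin down all the $w$-columns for $w\in\lk(u)$.

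Next I would handle the remaining columns: the $v_i$'s, the $\bar u$ column, and the columns $w_{*,j}$ for $w\in W\setminus\lk(u)$. For these I use the block of rows labeled by type $(R)$ in Lemma~\ref{lem-Muk}, namely the rows coming from $A_\ell(v,\bar u)$, $A_{\ell_e}(v,v')$, $A_{\ell_e}(v,w_i)$, $A_{\ell_e}(w_i,w'_i)$, together with the $(t_{\bar u}-1)$/$(1-t_{v_\bullet})$ block. The key observation is that these rows, restricted to the columns $v_1,\dots,v_m,\bar u$ and the $w_{*,j}$-columns for $w\notin\lk(u)$, essentially form a copy of the Alexander matrix $M_{\Gamma'}$ of a classical even Artin group on roughly $|V_{2,u}|+1 + k\cdot\#(W\setminus\lk(u))$ vertices (one copy of each $w_{*,j}$ behaving like an independent vertex, all $v$'s and $\bar u$ present). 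By the earlier lemma (``$\rank M_\Gamma = |V|-1$''), such a matrix has corank exactly $1$. Carefully: I would argue that after the row reductions from the $(RB)$-blocks, the matrix $M_{\Gamma,u,k}$ is row-equivalent to a block-triangular matrix whose diagonal blocks are (i) the full-rank triangular parts of the $M_{B(w,\bar u)}$ for $w\in\lk(u)$, and (ii) an Alexander-type matrix of a genuine even Artin group, whose corank is $1$. Summing: the total corank is $1$.

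The main obstacle — the step I expect to require real care — is the bookkeeping of \emph{which} column each reduced $(RB)$-block consumes and showing the reductions do not interfere across different $w$'s or with the $\bar u$ column. Concretely: each $M_{B(w,\bar u)}$ lives in columns $\{w_{0},\dots,w_{k-1},\bar u\}$, and these blocks for distinct $w\in\lk(u)$ share only the $\bar u$ column; I must check that after triangulating each one (deleting the $\bar u$ column gives full rank $k$ per Remark~\ref{PAmatrixrep}) the leftover ``$\bar u$-row'' of each block can be folded into the $(R)$-part without creating new rank. A clean way is: delete the $\bar u$ column entirely; then the $(RB)$-rows give $k\cdot\#(W\cap\lk(u))$ independent rows supported on the $w$-columns for $w\in\lk(u)$, and the $(R)$-rows give, on the complementary columns ($v_i$'s and $w_{*,j}$ for $w\notin\lk(u)$), a matrix that is (a row-submatrix of) $M_{\Gamma''}$ for an even Artin graph $\Gamma''$ with the $\bar u$-column removed — which has corank at most $1$ by the rank lemma. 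Adding back the $\bar u$ column can only increase the corank by at most $1$ and, since the $(R)$-rows with the $(t_{\bar u}-1)$ block do involve $\bar u$ nontrivially (e.g.\ relation $A_1(v_1,\bar u)$ contributes $(t_{\bar u}-1)$ in that column), one checks it does not increase at all. Hence $\corank(M_{\Gamma,u,k})\le 1$, as claimed.
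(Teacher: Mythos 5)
Your overall strategy (delete the $\bar u$ column, use the triangularity of the blocks $M_{B(w,\bar u)}$ from Remark~\ref{PAmatrixrep} to consume the columns of the vertices in $W\cap\lk(u)$, and cover the remaining columns with rows of type $(R)$) is in the same spirit as the paper's proof, which exhibits a full-rank square submatrix of size $nk+m$ built from $k$ copies of a spanning forest of $\Gamma\setminus\{u\}$ together with the $A_1(v,\bar u)$ rows. However, the key step of your argument has a genuine gap. You claim that the $(R)$-rows, restricted to the complementary columns ($v_1,\dots,v_m$ and $w_{*,j}$ for $w\notin\lk(u)$), form (a row-submatrix of) the Alexander matrix $M_{\Gamma''}$ of an even Artin graph and hence have corank at most $1$ by the rank lemma. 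This fails: take $\Gamma$ to be the path $u-w'-w$ with both labels at least $4$, so $V_{2,u}=\emptyset$, $W\cap\lk(u)=\{w'\}$ and $W\setminus\lk(u)=\{w\}$. The induced graph on the complementary vertices has no edges at all, so the corresponding ``$M_{\Gamma''}$'' is the zero matrix on $k$ columns, of corank $k$, not $1$. The rank on those columns actually comes from the cross relations $A_{\ell}(w_j,w'_j)$, whose restrictions to the complementary columns are single-entry rows (hence not rows of any Alexander matrix of a graph), and using them together with the $(RB)$-rows requires an explicit block-triangularization across the two groups of columns — which is precisely the bookkeeping you flagged as delicate and did not carry out. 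Note also that the rank lemma you invoke needs $\Gamma''$ connected, and even a connected $\Gamma$ gives no control on the connectivity of the induced graph on $V_{2,u}\cup(W\setminus\lk(u))$; nor do you treat the case where $\Gamma\setminus\{u\}$ itself is disconnected, which the paper handles by block-diagonality.

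A secondary but real problem is the final step: ``adding back the $\bar u$ column does not increase the corank'' is asserted rather than proved, and the reason offered is off — the relation $A_1(v_1,\bar u)$ contributes $t_{\bar u}-1$ in the column of $v_1$ and $1-t_{v_1}$ in the column of $\bar u$, and in any case the presence of nonzero entries in a column does not show that appending it raises the rank. The clean way out, and what the paper actually does, is to prove that the matrix with the $\bar u$ column deleted already has \emph{full} column rank $nk+m$ (corank $0$); then $\corank(M_{\Gamma,u,k})\le 1$ follows immediately because the full matrix has exactly one more column. To get full column rank the paper chooses a spanning forest $S$ of $\Gamma\setminus\{u\}$ whose $n$ edges each consume one vertex of $W$, orders its rows and columns into an upper-triangular block $M_n$ with nonzero diagonal, replicates it over the $k$ copies of the $W$-columns, and appends the diagonal block $(1-t_{\bar u})\id$ coming from the relations $A_1(v_i,\bar u)$ for the $V_{2,u}$-columns; only when $V_{2,u}=\emptyset$ does it bring in a single $(RB)$-block, interleaved with the $k$ tree copies. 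You would need to supply an argument of this kind to close the gap.
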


\begin{proof}
Let us consider $\Gamma_{u}=\Gamma\setminus \{u\}$. 
We will first assume that $\Gamma_{u}$ is connected. Following the notation above, 
recall that $V_{2,u}=\{v_1,\dots,v_m\}$ denotes the set of vertices adjacent to $u$ 
with label $2$ and $W=\{w_1,\dots,w_n\}$ denotes the set of remaining vertices of $\Gamma_{u}$. 
We will consider the matrix $M$ obtained eliminating the column corresponding to $u$ from the Alexander matrix $M_{\Gamma,u,k}$ (which has $(nk+m+1)-1=nk+m$ columns). We will prove the result showing that $M$ has maximal rank:
\begin{enumerate}
\item If $n=0$, the matrix $M$ becomes: 
$$
M=\begin{blockarray}{ccc}
v_{1} & \hdots & v_{m} \\
\begin{block}{(ccc)}
1-t_{\bar u} &  &\\
& \ddots & \\
&  & 1-t_{\bar u} \\
* & * & * \\
... & ... & ... \\
* & * & * \\
\end{block}
\end{blockarray}
$$
which has maximal rank.
		
\item If $n\neq 0$, consider $T$ a spanning tree on~$\Gamma_{u}$.
\begin{enumerate}
\item Assume $m \neq 0$. In this case we will describe certain submatrices of $M_\Gamma$ 
which will appear as blocks in $M_{\Gamma,u,k}$ of the appropriate rank.

In order to do this, note that $T$ will contain at least $n$ edges $e_{1},..,e_{n}$ satisfying 
that each $e_i$ involves at least one vertex in $W$ and $W\subset V(\{e_{1},...,e_{n}\})$.
Let us denote by $S\subset T$ the forest containing the edges $e_{1},..,e_{n}$. Note that $S$ 
defines a submatrix $M_0$ of $M_{\Gamma_{u}}$. We will show that columns and rows can be ordered 
in such a way that $M_0$ is upper triangular, every diagonal element is non-zero, and the 
columns associated with the vertices $W$ come first.

This can be easily seen by induction. In case $\Gamma_{u}$ has only two vertices, say $v$ and $w$ 
(this is by hypothesis the minimum number of vertices), and only one edge, the matrix $M_0$ is 
a row matrix of type~\eqref{eq-row} whose columns can be reordered as wanted.
Now, suppose the result is true for $\lambda-1$ vertices and consider now the case when $\Gamma_{u}$ 
has exactly $\lambda$ vertices. Choose a vertex $w'$ in $V(S)$ of degree $1$. Note that, by definition, 
$S$ must contain at least one such vertex in $W$, so one can assume $w'\in W$.
Then $S\setminus \{w'\}$ verifies the result. The matrix $M_0$ results from the latter after adding 
one column (associated with $w'$) and one row $f$ (associated with the edge containing $w'$). 
Note that placing $w'$ as the first column and $f$ as the first row concludes the proof.
			
Also note that the submatrix $M_n$ of $M_0$ resulting from keeping only the columns associated with 
the vertices in $W$ appears as is in $k$ blocks in $M_{\Gamma,u,k}$ corresponding to the copies of 
the vertices in $W$ and the relations associated with the edges of $S$.
This produces a square submatrix $M_k$ of $M_{\Gamma,u,k}$ of size $kn$ and non-zero determinant.
Finally, let us add to $M_k$ the columns associated with all vertices in $V_{2,u}$ placed at the end. 
Since every $v_{i}\in V_{2,u}$ is adjacent to $u$ with label $2$ the relations associated with these 
edges result in rows producing an upper triangular square submatrix $M$ of size $kn+m$ whose determinant 
is non-zero as below.
$$
M=\begin{blockarray}{cccccc}
& W &  &  & V &  \\
\begin{block}{( ccc|ccc)}
M_n & & & & &\\
0& \ddots &&& \\
\vdots& 0  & M_n\\
\cline{1-6}
\vdots& \vdots & \ddots&1-t_{\bar u}&&\\
\vdots& \vdots &\ddots&\ddots&\ddots&\\
0& 0 & 0 &0&0&1-t_{\bar u}\\
\end{block}
\end{blockarray}
$$
This ends this case.

\item If $m=0$, then the spanning tree $T$ consists of $n$ vertices and $n-1$ edges. Let us 
consider $w_n$ a vertex in $\Gamma$ adjacent to $u$ (there must be at least one since $\Gamma$ is connected).
Consider the Alexander submatrix $M_{B(w_n,\bar u)}$ associated with relations of type $(RB)$, which by 
Remark~\ref{PAmatrixrep}, is equivalent to:

$$
\begin{blockarray}{ccccc}
w_{n,0} & w_{n,1} & ... & w_{n,k-2}& w_{n,k-1} \\
\begin{block}{(ccccc)}
* &* & \hdots & * & *\\
0 & * & \hdots & *& *\\
0 & \ddots & \ddots& * & \vdots\\
0 & 0 & \ddots& \ddots & \vdots\\
0 & 0 & \hdots & 0& * \\
\end{block}
\end{blockarray}
$$
Let $f_{1},...,f_{k}$ denote the $k$ rows of this matrix.

On the other hand, let $M_T$ be the $(n-1) \times n$ submatrix $M_{\Gamma}$ associated with $T$.
Let us order the relations in such a way that $M_T$ is upper triangular with non-zero diagonal 
elements and whose last column corresponds to $w_n$ --\, in other words, the vertex associated 
to the first column must have degree $1$. 

For each group of copies of the vertices $w_{j,p}$, there is a copy of the tree $T$ with an 
analogous matrix $M_{T,p}$. Now, one can write the Alexander matrix $M_\Gamma$ in the following 
way: the first rows correspond to the matrix $M_T$, then the row $f_{1}$ completed with zeroes 
where necessary, then the rows corresponding to the matrix $M_{T,1}$, then the row $f_{2}$. 
Finally the matrix $M_{T,k-1}$ and the row $f_{k}$. This matrix is clearly upper triangular 
and it has maximal rank $(kn=kn+m)$.
\end{enumerate}
\end{enumerate}

Summarizing, if $\Gamma_{u}$ is connected, then $\rank (M_{\Gamma,u,k})\geq nk+m$, and hence 
$\corank (M_{\Gamma,u,k})\leq 1$.

Assume now that $\Gamma_{u}$ is not connected, and denote by $\Gamma_{1},...,\Gamma_{s}$ its connected components.

Then, the Alexander matrix $M_{\Gamma,u,k}$ after removing the column $\bar u$ is of the form:

$$
\begin{blockarray}{cccc}
C_{1} & C_{2} & ... & C_{s} \\
\begin{block}{( cccc)}
M_{C_{1}} &0  &\hdots&0\\
0& M_{C_{2}} &\ddots&\vdots \\
\vdots&\ddots&\ddots &0 \\
0&\hdots& 0 & M_{C_{s}}\\ 
\end{block}
\end{blockarray}
$$
where $M_{C_{i}}$ corresponds to a connected graph. 
The result follows from the connected case since the matrix is block-diagonal.
\end{proof}

\begin{lem}\label{AnPA1}
Assume $e=\{w,u\}\in E$ such that $\ell_e\equiv 0\mod k$, then the matrix $M_{B(w,\bar u)}$ 
has rank 1 over $\Lambda/\mathfrak{p}$, where $\mathfrak{p}$ is the ideal generated by 
$1-t_{\bar u}t_{w,0}\cdots t_{w,k-1}$.
\end{lem}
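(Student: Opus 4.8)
The plan is to make the hypothesis $\ell_e \equiv 0 \bmod k$ fully explicit, so $\ell_e = c_e k$ and $r_e = 0$. In particular $\varepsilon = 0$ for every $i \in \ZZ_k$ in the relation $B^i_{\ell_e,k}(w,\bar u)$, which simplifies the case analysis of Lemma~\ref{lemma-foxB}. First I would recall that modulo $\mathfrak{p}$ we impose $t_0 \bar{t}_w = t_{\bar u} t_{w,0}\cdots t_{w,k-1} = 1$; this is precisely the relation that makes the cyclic products $\langle w,\bar u\rangle_{i,0}^{\ell_e}$ all "wrap around" the same number of times, and it forces $p_{c_e}(t_0\bar t_w) = c_e$ and $(t_0\bar t_w)^{c_e} = 1$ in $\Lambda/\mathfrak{p}$. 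Substituting these values into the formulas of Lemma~\ref{lemma-foxB} (with $r_e = 0$, $\varepsilon = 0$) would give an explicit description of the entries of $M_{B(w,\bar u)}$ over $\Lambda/\mathfrak{p}$: each row is, up to the monomial prefactors $\bar t_{w,i,j}$, a scalar multiple of a single fixed "pattern" vector, so that the matrix has rank at most $1$; the monomial prefactors are units in $\Lambda/\mathfrak{p}$ and so do not affect the rank.

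The next step is to show the rank is exactly $1$, i.e.\ not $0$. Here I would invoke Lemma~\ref{PArank}, case~(1) (the case $\ell_e \equiv 0 \bmod k$), which already exhibited, after evaluating $t_{w,0} = \cdots = t_{w,k-2} = 1$, an explicit upper-triangular matrix of maximal rank; in particular $M_{B(w,\bar u)}$ is not the zero matrix, so its image in $\Lambda/\mathfrak{p}$ — being at most rank $1$ by the previous paragraph — cannot collapse to rank $0$ provided the nonvanishing survives modulo $\mathfrak{p}$. To see that it does, I would evaluate at a convenient point of the subtorus $Z(\mathfrak{p})$: for instance set $t_{w,0} = \cdots = t_{w,k-1} = 1$ and $t_{\bar u} = 1$ — this lies on $Z(\mathfrak p)$ — and check directly from the simplified formulas that at least one entry (say the derivative with respect to $w_i$, which reduces to $(1 - t_0\bar t_w/t_{w,i+r_e})p_{c_e} + (t_0\bar t_w)^{c_e}$, hence to $1$ at this point) is nonzero. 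Alternatively, and perhaps more cleanly, one evaluates at a generic point of $Z(\mathfrak p)$ where the $t_{w,j}$ are generic subject to $t_{\bar u}t_{w,0}\cdots t_{w,k-1}=1$ and observes a nonzero entry among the $\partial/\partial \bar u$ column, which equals $\bar t_{w,i,k}(1 - t_{w,i}^{-1})\,c_e$ and is nonzero for generic values.

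I expect the main obstacle to be bookkeeping rather than conceptual: one must carefully track that the various monomial prefactors $\bar t_{w,i,j}$ appearing in Lemma~\ref{lemma-foxB} are genuinely units in $\Lambda/\mathfrak{p}$ (they are products of the $t_{w,\bullet}$, hence units in the Laurent ring, and remain units in the quotient), and that after factoring them out every row is an $\Lambda/\mathfrak{p}$-multiple of one common vector. The cleanest way to organize this is to first reduce to $r_e = 0$, $\varepsilon = 0$ globally, then observe that under $t_0\bar t_w = 1$ the two "halves" of the $B$-relation become cyclic permutations of one another, so the corresponding Fox-derivative rows of $\langle w,\bar u\rangle_{i,0}^{\ell_e}$ and $\langle w,\bar u\rangle_{i+1,0}^{\ell_e}$ differ only by the multiplication-rule prefactor; their difference therefore spans a rank-$1$ space. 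Combining the upper bound from this computation with the nonvanishing point supplied (essentially) by Lemma~\ref{PArank} yields $\rank_{\Lambda/\mathfrak p} M_{B(w,\bar u)} = 1$, as claimed.
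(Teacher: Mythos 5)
Your overall route is the same as the paper's: reduce modulo $\mathfrak{p}$, observe that the rows of $M_{B(w,\bar u)}$ become pairwise proportional up to monomial (unit) prefactors, conclude the rank is at most $1$, and then exhibit a nonvanishing entry. Your conceptual justification --- that for $r_e=\varepsilon=0$ the two sides of $B^i$ are cyclic conjugates, so modulo $\mathfrak p$ the row $\partial B^i$ equals $(1-t_{w,i}^{-1})\,\partial\langle w,\bar u\rangle_{i,0}^{\ell_e}$ and these vectors are proportional by units across $i$ --- is exactly the content of the explicit identity $(t_{\bar u}-1)t_{w,0}\cdots t_{w,j-2}f_j=(t_{w,j-2}-1)f_1$ that the paper verifies entry by entry, and the nonvanishing is read off (as in your ``alternatively'' sentence) from the $\bar u$-column entry $\bar t_{w,i,k}(1-t_{w,i}^{-1})p_{c_e}$, which is nonzero in the domain $\Lambda/\mathfrak p$.

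There is, however, one step that would fail as written, and it is worth naming because it is internally inconsistent with your own rank bound. You propose to certify $\rank\geq 1$ by evaluating at $t_{w,0}=\dots=t_{w,k-1}=t_{\bar u}=1\in Z(\mathfrak p)$ and claiming the diagonal entry reduces to $\bigl(1-t_0\bar t_w/t_{w,i}\bigr)p_{c_e}+(t_0\bar t_w)^{c_e}=1$ there. But at that point every off-diagonal entry vanishes (they all carry the factor $1-t_{w,i}^{-1}$), so the specialized matrix would be the identity on the $w$-columns and would have rank $k\geq 2$; since a nonvanishing minor at a point of $Z(\mathfrak p)$ is a nonvanishing minor over $\Lambda/\mathfrak p$, this would contradict the rank-$\leq 1$ bound you just proved, i.e.\ it would disprove the lemma. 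The resolution is that the additive term $(t_0\bar t_w)^{c_e}$ in the diagonal formula of Lemma~\ref{lemma-foxB} comes from the trailing letter $x_{i+r_e}^{\varepsilon}$ of $\langle w,\bar u\rangle_{i,\varepsilon}^{\ell_e}$, which is absent precisely in the degenerate case $r_e=\varepsilon=0$ relevant here; the correct diagonal entry is $\bigl(1-t_0\bar t_w/t_{w,i}\bigr)p_{c_e}(t_0\bar t_w)$, which vanishes at the all-ones point together with the rest of the row (this is consistent with the matrix $M$ displayed in the paper's proof, whose diagonal entries are $1-\bar t_{w,i,i-1}$ with no additive constant). So you should discard the first evaluation argument, apply Lemma~\ref{lemma-foxB} only after stripping that term, and keep the generic-point argument on the $\bar u$-column, which is correct and completes the proof.
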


\begin{proof}
By Lemma~\eqref{lemma-foxB} we know that $M_{B(w,\bar u)}$ is a multiple by 
$p_{c_{e}}(t_{\bar u}\bar{t}_{w})$ of the following matrix
$$
M=
\left(\begin{array}{cccc}
1-\bar{t}_{w,0,k} & t_{\bar u}-1 & ... & \bar{t}_{w,0,k-1}(t_{\bar u}-1)\\
\bar{t}_{w,1,k}(t_{w,0}-1) & 1-\bar{t}_{w,1,0} & ... & \bar{t}_{w,1,k-1}(t_{w,0}-1)\\
... & ... & ... & ...\\
\bar{t}_{w,k-1,k}(t_{w,k-2}-1) & \bar{t}_{w,k-1, 0}(t_{w,k-2}-1) &...& t_{w,k-2}-1
\end{array}\right)$$
Note that, mod $\mathfrak{p}$, $M$ can be written as
$$\left(\begin{array}{cccc}
t_{\bar u}^{-1}(t_0-1) & t_{\bar u}-1 & ... & \bar{t}_{w,0,k-1}(t_{\bar u}-1)\\
t_{\bar u}^{-1}t_{w,0}^{-1}(t_{w,0}-1) & t_{w,0}^{-1}(t_{w,0}-1) & ... & \bar{t}_{w,1,k-1}(t_{w,0}-1)\\
... & ... & ... & ...\\
t_{\bar u}^{-1}t_{w,0}^{-1}...t_{w,k-2}^{-1}(t_{w,k-2}-1) & t_{w,0}^{-1}...t_{w,k-2}^{-1}(t_{w,k-2}-1) 
&...& t_{w,k-2}-1
\end{array}\right)$$
If $f_{j}$ denotes the $j$-th row of $M$ note that
$$(t_{\bar u}-1)t_{w,0}\cdots t_{w,j-2}f_j=(t_{w,j-2}-1)f_1$$
for any $j=2,...,k-1$ and thus the result follows.
\end{proof}


\section{$\QP$-Irreducible graphs}
\label{sec-irreduicible}
As mentioned in the Introduction, a graph is called \emph{quasi-projective} --\,or 
$\QP$-graph\,-- if its associated Artin group is in $\G$. The purpose of this section 
is to describe the simplest $\QP$-graphs, referred to as $\QP$-\emph{irreducible} graphs for
even Artin groups.

\begin{dfn}\label{BasicPieces}
We call $\Gamma$ a $\QP$-\textbf{irreducible graph} 
if $\A_\Gamma$ is quasi-projective and it cannot be obtained as a 2-join
of two quasi-projective graphs.
\end{dfn}

By Proposition~\ref{prop:QP}\eqref{QP2J}, the 2-join of $\QP$-graphs must be a $\QP$-graph. However,
in general, properties on Artin groups are not easily read from subgraphs. This result 
allows one to read an obstruction to quasi-projectivity from certain subgraphs to graphs.

\begin{dfn}
We say that $\Gamma_1$ is a \textbf{$v$-subgraph} of $\Gamma$ if $\Gamma_1$ is obtained 
from $\Gamma$ by deleting some vertices. We will denote it $\Gamma_1\subset_v \Gamma$. 
In this situation $\Gamma$ is called a \textbf{$v$-supergraph} of~$\Gamma_1$.
\end{dfn}

\begin{lem}\label{subgrafo}
Let $\A_{\Gamma_1}$ be the Artin group of $\Gamma_1=(V_1,E_1,m_1)$. Assume that for certain 
$k\in \mathbb Z_{\geq 2}$, and $u\in \Gamma_1$, the subgroup 
$\hat G_k:=\A_{\Gamma_1,u,k}\subset \A_{\Gamma_1}$ 
satisfies that there exist two ideals 
$\hat I_{1},\hat I_{2}\subset \hat \Lambda_k:=\mathbb C[H_1(\hat G_k)]$ such that:
\begin{enumerate}
\enet{\rm (C\arabic{enumi})}
\item\label{subgrafo-prop-1} $Z(\hat I_i)\subset V_{r_i}(\hat G_k)$, $r_i\geq 1$ for $i=1,2$,
\item\label{subgrafo-prop-3} $\dim (Z(\hat I_{1}+\hat I_{2}))\geq 1$, and 
\item 
\begin{enumerate}
\eneti{\rm (\alph{enumii})}
 \item\label{subgrafo-prop-2} either $Z(\hat I_1+\hat I_2)\subset V_r(\hat G_k)$ for $r>\max\{r_1,r_2\}$,
 \item\label{subgrafo-prop-2b} or $\hat I_1, \hat I_2$ are prime ideals of~$\hat \Lambda_k$. 
\end{enumerate}
\end{enumerate}
Then $\A_{\Gamma_1}$ is not quasi-projective.
	
Moreover, if $\Gamma=(V,E,m)$ is any $v$-supergraph of $\Gamma_1$ such that 
$m_e$ is even for any $e=\{v,w\}\in E$, $v\in V_1$, $w\in V\setminus V_1$, 
then $\A_{\Gamma}$ is not quasi-projective.
\end{lem}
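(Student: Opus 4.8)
The first assertion — that $\A_{\Gamma_1}$ is not quasi-projective — is the heart of the matter; the "moreover" part should follow by a reduction to it. The strategy for the first part is to argue by contradiction using the structure theorem for characteristic varieties of quasi-projective groups (Proposition~\ref{Fin}) together with the fact (Proposition~\ref{prop:QP}\eqref{QPGL}) that a finite-index subgroup of a quasi-projective group is quasi-projective. So I would begin: suppose $\A_{\Gamma_1}$ is quasi-projective; then the co-cyclic subgroup $\hat G_k=\A_{\Gamma_1,u,k}$, being of finite index $k$, is quasi-projective as well. Hence each characteristic variety $V_r(\hat G_k)\subset \Spec \hat\Lambda_k$ has, away from torsion points, all irreducible components equal to torsion-translated subtori, and the intersection of any two distinct such components is a finite set of torsion points.

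Now I would play the hypotheses (C1)–(C3) against this. By \eqref{subgrafo-prop-1}, the zero sets $Z(\hat I_1), Z(\hat I_2)$ sit inside $V_{r_1}(\hat G_k)$ and $V_{r_2}(\hat G_k)$ respectively; by \eqref{subgrafo-prop-3}, their common zero locus $Z(\hat I_1+\hat I_2)$ has dimension $\geq 1$, so it is infinite and therefore \emph{not} a finite union of torsion points. The contradiction is then extracted from \eqref{subgrafo-prop-2} or \eqref{subgrafo-prop-2b}. In case \eqref{subgrafo-prop-2}: $Z(\hat I_1+\hat I_2)$ is a positive-dimensional subvariety of $V_r(\hat G_k)$ with $r>\max\{r_1,r_2\}$; on the other hand it lies in $V_{r_1}(\hat G_k)\cap V_{r_2}(\hat G_k)$, and I would identify a positive-dimensional irreducible component $Z\subset Z(\hat I_1+\hat I_2)$, show $Z$ lies in a component $W_r$ of $V_r$ and simultaneously forces two components $W_1\subset V_{r_1}$, $W_2\subset V_{r_2}$ meeting in a positive-dimensional set — but since $V_{r_1}\supset V_{r_2}\supset\cdots$ by the filtration, the deeper stratum $V_r$ being "bigger than it should be" along $Z$ contradicts the essential-coordinate/strictness properties of translated tori forced by Proposition~\ref{Fin}; concretely one shows a component of $V_r$ that is a translated subtorus cannot be strictly contained in the smaller-index stratum in this way. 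In case \eqref{subgrafo-prop-2b}: $\hat I_1,\hat I_2$ prime means $Z(\hat I_i)$ is irreducible, and being contained in $V_{r_i}$ it lies in a single translated-subtorus component $W_i$; then $Z(\hat I_1+\hat I_2)=W_1\cap W_2$ up to the ambient varieties, which by Proposition~\ref{Fin} is finite — contradicting \eqref{subgrafo-prop-3}. Either way $\A_{\Gamma_1}$ is not quasi-projective.

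For the "moreover" part, let $\Gamma$ be a $v$-supergraph of $\Gamma_1$ with all connecting labels even. The key point is that the vertex set $V_1\subset V$ generates a retract: since all edges from $V_1$ to $V\setminus V_1$ are even, the homomorphism $\A_\Gamma\to\A_{\Gamma_1}$ killing the generators in $V\setminus V_1$ is well-defined (each even relation $\langle u v\rangle^{2\ell}=\langle v u\rangle^{2\ell}$ maps either to a relation of $\Gamma_1$, or — when one vertex is dropped — to a relation of the form $(uv)^\ell=(vu)^\ell$ which becomes trivial, or to a trivial identity), and it splits the inclusion $\A_{\Gamma_1}\hookrightarrow\A_\Gamma$. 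So $\A_{\Gamma_1}$ is a retract, hence a subgroup, of $\A_\Gamma$; but more is needed since "subgroup of quasi-projective" is not automatically quasi-projective. Instead I would push the co-cyclic construction up: form $\A_{\Gamma,u,k}$, observe that the retraction restricts to a retraction $\A_{\Gamma,u,k}\twoheadrightarrow\A_{\Gamma_1,u,k}=\hat G_k$ (one checks $\alpha_{u,k}$ factors compatibly through the retraction since $u\in V_1$), and conclude that if $\A_\Gamma$ were quasi-projective then $\A_{\Gamma,u,k}$ would be too, and hence its retract $\hat G_k$ would be quasi-projective — because characteristic varieties of a retract embed into those of the ambient group, so the obstruction packaged in (C1)–(C3) transfers verbatim. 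Running the first part's argument on $\hat G_k$ then yields the contradiction.

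\textbf{Main obstacle.} The delicate step is case \eqref{subgrafo-prop-2}: turning "$Z(\hat I_1+\hat I_2)$ is positive-dimensional and sits too deep in the filtration $V_{r}\subset V_{r_1}\cap V_{r_2}$" into a genuine contradiction with Proposition~\ref{Fin}. Unlike case \eqref{subgrafo-prop-2b}, where primality instantly gives an irreducible subvariety inside a single component and the finite-intersection clause closes the argument, in case \eqref{subgrafo-prop-2} I must rule out the possibility that the positive-dimensional locus lies in the interior of a single large component shared by all three strata; this requires knowing that along a translated subtorus the jump in the characteristic-variety filtration is governed by the local system in a way incompatible with the depth $r>\max\{r_1,r_2\}$ — essentially an argument that a translated subtorus component of $V_r$ cannot arise as the transverse intersection pattern dictated here. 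I also need to be careful that the retraction in the "moreover" part is genuinely well-defined on relations involving a dropped vertex, i.e. that erasing a generator sends $(uv)^\ell=(vu)^\ell$ to a consequence of the remaining relations; this is where evenness of the connecting labels is used and must be verified, not merely asserted.
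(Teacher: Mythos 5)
Your proposal matches the paper's proof in both structure and substance: the first assertion is obtained by the same contradiction (the finite-index co-cyclic subgroup $\hat G_k$ would be quasi-projective by Proposition~\ref{prop:QP}\eqref{QPGL}, and conditions (C1)--(C3) then produce two irreducible components of its characteristic varieties meeting in a positive-dimensional set, against Proposition~\ref{Fin}), and the \emph{moreover} part uses the same surjection $\A_{\Gamma,u,k}\to\A_{\Gamma_1,u,k}$ --- well defined precisely because the connecting labels are even --- to transport the ideals as $I_i=I+\hat I_i$. The paper is simply more concrete in the two places you flag as delicate: it realizes the transfer by checking that $M_{\Gamma,u,k}\otimes\Lambda_k/I$ is block-diagonal with $M_{\Gamma_1,u,k}$ as a block (so the depth conditions survive the pullback), and in case~\ref{subgrafo-prop-2} it does not need the filtration-jump analysis you anticipate, using that hypothesis only to select irreducible components $H_1\subset Z(\hat I_1)$ and $H_2\subset Z(\hat I_2)$ with neither contained in the other's zero set, whose intersection is positive-dimensional by~\ref{subgrafo-prop-3} and thus already violates the finite-intersection clause of Proposition~\ref{Fin}.
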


\begin{proof}
Let us prove the first part by contradiction. 
Assuming that $\A_{\Gamma_1}$ is quasi-projective would imply that the co-cyclic group $\A_{\Gamma_1,u,k}$
is also quasi-projective by Proposition~\ref{prop:QP}\eqref{QPGL}. The strategy of this proof is to 
reach a contradiction on the quasi-projectivity of $\A_{\Gamma_1,u,k}$ by finding two irreducible 
components of its characteristic variety intersecting in a positive dimensional component and
thus contradicting Proposition~\ref{Fin}.
Let us assume that $r_1\geq r_2$. Note that the set of zeroes $Z(\hat I_i)$ may be non-irreducible,
but, using condition~\ref{subgrafo-prop-2} in the statement, there exists an irreducible 
component, say $H_1$ (resp. $H_2$) in $Z(\hat I_1)$ (resp. $Z(\hat I_2)$) which is not contained in 
$Z(\hat I_2)$ (resp. in $Z(\hat I_1)$). 
By condition~\ref{subgrafo-prop-3} their intersection $H_1\cap H_2$ has dimension greater 
or equal to $1$. 

To prove the \textit{moreover} part, we will show that $\A_{\Gamma}$ also satisfies the hypotheses
of the first part, that is, that there exist two ideals $I_1, I_2\subset \Lambda_k:=\mathbb C[H_1(G_k)]$ 
satisfying conditions~\ref{subgrafo-prop-1}-\ref{subgrafo-prop-3} and either~\ref{subgrafo-prop-2}
or~\ref{subgrafo-prop-2b} for the subgroup 
$G_k:=\A_{\Gamma,u,k}\subset \A_{\Gamma}$. Note that, the condition on the parity of the labels
joining vertices from $V_1$ and $V\setminus V_1$ ensures the existence of a commutative diagram
$$
\array{ccccccccc}
1 & \to & G_k & \to & G_\Gamma & \to & \ZZ & \to & 1 \\
& & & & \downarrow & & || & &\\
1 & \to & \hat G_k & \to & G_{\Gamma_1} & \to & \ZZ & \to & 1 \\
\endarray
$$
which allows for the existence of a morphism $H_1(G_k)\to H_1(\hat G_k)$ extending to 
$\Lambda_k\to \hat\Lambda_k$. Moreover, $\hat\Lambda_k=\Lambda_k/I$ for a certain ideal.
In order to describe it let us decompose $V$ as a disjoint union 
$V=V_1\cup \tilde V_{2,u}\cup W$, where 
$\tilde V_{2,u}=\{v\in V\setminus V_1\mid e=\{u,v\}\in E, m_e=2\}$. Then 
$$I=\text{Ideal }\left(\{t_v-1 \mid v\in V\setminus \tilde V_{2,u}\} \cup \{t_{w,j}-1 \mid w\in W, j\in \ZZ_k\}\right).$$

Since the tensor product is right exact, the matrix $\hat M_{\Gamma,u,k}=M_{\Gamma,u,k} \otimes \Lambda/I$ 
determines the Alexander $\hat \Lambda_k$-module of~$\hat G_k$.
We claim that 
\begin{equation}\label{eq-Mhat}
\hat M_{\Gamma,u,k}=\left(\begin{array}{c|c}
M_{\Gamma_1,u,k} & 0 \\
\hline
0 & A'
\end{array}\right).
\end{equation}
In order to check this, first note that the submatrix of $\hat M_{\Gamma,u,k}$ whose rows are 
associated to the edges of $\Gamma_{1}$ has the form
$$\left(\begin{array}{c|c}
M_{\Gamma_1,u,k} & 0 
\end{array}\right).$$
The claim will follow if we prove that the remaining rows, associated with the
edges in $E\setminus E_1$, satisfy that any entry in a column in~$V_1$ is in $\hat I$.
The latter is a consequence of~\eqref{eq-row} and Lemma~\ref{lemma-foxB}.

Finally, note that if condition~\ref{subgrafo-prop-2} (resp.~\ref{subgrafo-prop-2b}) is satisfied for $\hat I_i$, 
then also condition~\ref{subgrafo-prop-2} (resp.~\ref{subgrafo-prop-2b}) is satisfied for $I_i=I+\hat I_i$
using~\eqref{eq-Mhat} (resp.~using that $Z(I_i)=Z(\hat I_i)\times \{1\}$ is irreducible). 
Therefore the ideals $I_1, I_2\subset \Lambda_k$ also satisfy the conditions of the statement 
for~$\A_\Gamma$ and the result follows.
\end{proof}

\begin{obs}
By Theorem~\ref{Suciu2}, the only $\QP$-irreducible right-angled graphs are sets of $r$ disconnected 
vertices,~$\overline{K}_r$. On the other hand, we have established by Theorem~\ref{BP} that both the 
segment $S_{2\ell}$ with label $2\ell$ ($\ell>1$) and the triangle $T(4,4,2)$ are also $\QP$-irreducible graphs. 
\end{obs}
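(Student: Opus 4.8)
The plan is to establish the two assertions of the remark in turn, each of which reduces to one of the cited theorems together with an elementary inspection of the edge labels; so the argument is almost entirely bookkeeping.

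\emph{The right-angled graphs.} I would start from Theorem~\ref{Suciu2}: a right-angled graph $\Gamma$ lies in $\G$ exactly when $\Gamma=\overline{K}_{r_1}*_2\cdots*_2\overline{K}_{r_s}$ for some $s\ge 1$ and $r_i\ge 1$. If $s\ge 2$, then, the $2$-join being associative, $\Gamma=\overline{K}_{r_1}*_2(\overline{K}_{r_2}*_2\cdots*_2\overline{K}_{r_s})$ exhibits $\Gamma$ as the $2$-join of two non-empty graphs in $\G$, so $\Gamma$ is not $\QP$-irreducible. If $s=1$, then $\Gamma=\overline{K}_{r_1}$; since a $2$-join of two non-empty graphs contains at least one connecting edge, necessarily of label $2$, whereas $\overline{K}_{r_1}$ has no edges, the graph $\overline{K}_{r_1}$ admits no such decomposition and is therefore $\QP$-irreducible. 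This proves simultaneously that the graphs $\overline{K}_r$ are $\QP$-irreducible and that these are the only $\QP$-irreducible right-angled graphs.

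\emph{The segment and the triangle.} That $S_{2\ell}$ with $\ell>1$ and $T=T(4,4,2)$ lie in $\G$ is precisely Theorem~\ref{BP}, so here only the irreducibility has to be checked, and again this is a matter of counting vertices and reading labels. A $2$-join of two non-empty graphs with two vertices in total can only be $\overline{K}_1*_2\overline{K}_1$, i.e.\ the segment of label $2$; since $2\ell\ne 2$, the graph $S_{2\ell}$ is not a $2$-join. Likewise, if $T=\Gamma_1*_2\Gamma_2$ with both factors non-empty, then, as $T$ has three vertices, one factor is a single vertex $x$, and every edge of $T$ incident to $x$ must carry label $2$; but in $T$ every vertex is incident to some label-$4$ edge (the two endpoints of the label-$2$ edge each to one, the third vertex to two), a contradiction. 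Hence $S_{2\ell}$ and $T$ are $\QP$-irreducible.

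I do not expect a genuine obstacle: the mathematical content is carried entirely by Theorems~\ref{Suciu2} and~\ref{BP}. The one point that must be fixed carefully is the convention --- implicit in Definition~\ref{def-2join} and Definition~\ref{BasicPieces} --- that a $2$-join is formed from \emph{two non-empty} labeled graphs, since otherwise every $\Gamma$ would be the trivial $2$-join $\Gamma*_2\emptyset$ and $\QP$-irreducibility would be vacuous. Granting this, the whole remark rests on the elementary observation that a $2$-join always introduces connecting edges of label exactly $2$, which is exactly what prevents $S_{2\ell}$ (for $\ell>1$) and $T(4,4,2)$ from decomposing.
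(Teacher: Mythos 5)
Your argument is correct and follows exactly the route the paper intends: the remark is stated without further proof precisely because it reduces to Theorems~\ref{Suciu2} and~\ref{BP} plus the routine observation that a $2$-join of non-empty factors always introduces connecting edges of label $2$, which is what you verify. Your explicit label-checking for $S_{2\ell}$ and $T(4,4,2)$, and your note on excluding the empty factor, fill in the details the authors left implicit, and match the conventions used throughout the paper.
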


\begin{center}
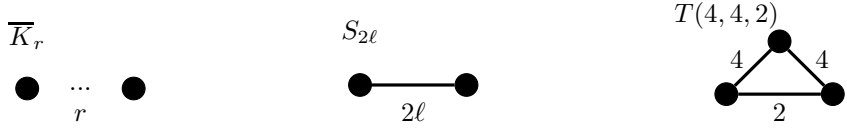
\begin{figure}[ht]
\begin{tabular}{ccccc}
\begin{tikzpicture}[scale=.7,vertice/.style={draw,circle,fill,minimum size=0.3cm,inner sep=0}]
\node[vertice] at (1,1) {};
\node[vertice] at (-1,1) {};
\node at (0,1) {$...$};
\node at (0,.5) {$r$};
\node at (-1,2) {$\overline{K}_r$};
\end{tikzpicture}
& \quad \quad  \quad \quad \quad \quad&
\begin{tikzpicture}[scale=.7,vertice/.style={draw,circle,fill,minimum size=0.3cm,inner sep=0}]
\node[vertice] (D1) at (1,1) {};
\node[vertice] (F1) at (-1,1) {};
\draw[very thick] (D1)--(F1);
\node at (0,.5) {$2\ell$};
\node at (-1,2) {$S_{2\ell}$};
\end{tikzpicture}
& \quad \quad  \quad \quad \quad \quad&
\begin{tikzpicture}[scale=.7,vertice/.style={draw,circle,fill,minimum size=0.3cm,inner sep=0}]
\node[vertice] (D1) at (1,.5) {};
\node[vertice] (E1) at (0,1.5) {};
\node[vertice] (F1) at (-1,.5) {};
\draw[very thick] (D1)--(F1)--(E1)--(D1);
\node[left] at (-0.5,1.15) {$4$};
\node[right] at (0.5,1.15) {$4$};
\node[below] at (0,.5) {$2$};
\node at (-1,2) {$T(4,4,2)$};
\end{tikzpicture}
\end{tabular}
\caption{$\QP$-irreducible graphs of type $\overline{K}_r$, $S_{2\ell}$, and $T(4,4,2)$.}
\label{fig-simple-blocks2}
\end{figure}
\end{center}

The purpose of this section is to show that the only $\QP$-irreducible graphs are 
$\overline{K}_r$, $S_{2\ell}$ $(\ell>1)$, and $T(4,4,2)$.

First we can assume that our graph has at least three vertices, otherwise it is 
$\QP$-irreducible if and only if it is disconnected $\overline{K}_r$ ($r=1,2$) or a 
segment $S_{2\ell}$ $(\ell>1)$. The second reduction is given in~\cite{ACM-artin}, 
for \emph{strictly even} graphs, that is, even graphs that are not right-angled.
We recall it here.

\begin{thm}[{\cite[Thm. 5.26]{ACM-artin}}]
\label{thm-complete}
If $\Gamma$ is a strictly even, non-complete graph with at least three vertices, 
then $A_{\Gamma}$ is not quasi-projective.
\end{thm}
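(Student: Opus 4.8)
The plan is to deduce the statement from Lemma~\ref{subgrafo}, by exhibiting inside $\Gamma$ a small induced $v$-subgraph whose co-cyclic subgroup already violates Proposition~\ref{Fin}. The crucial simplification is that $\Gamma=(V,E,2\ell)$ is even, so \emph{every} label of $\Gamma$ is even; hence the parity condition in the \emph{moreover} part of Lemma~\ref{subgrafo} is automatic for any $v$-subgraph $\Gamma_1\subset_v\Gamma$. It therefore suffices to verify conditions~\ref{subgrafo-prop-1}--\ref{subgrafo-prop-3} for a short list of ``elementary'' strictly even, non-complete graphs $\Gamma_1$ (on three, or at most four, vertices) and then invoke Lemma~\ref{subgrafo} to propagate non-quasi-projectivity to every $v$-supergraph, in particular to $\Gamma$.

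First I would carry out the graph-theoretic reduction. Fix an edge $e_0=\{u,v\}\in E$ with $m_{e_0}\geq 4$ (it exists because $\Gamma$ is strictly even) and a non-adjacent pair of vertices (it exists because $\Gamma$ is non-complete). If some vertex $w\notin\{u,v\}$ is non-adjacent to $u$ or to $v$, then the subgraph induced on $\{u,v,w\}$ is one of two elementary blocks: a path $w$--$v$--$u$ (resp.\ $w$--$u$--$v$) one of whose edges has label $\geq 4$, or the disjoint union $S_{2\ell_0}\sqcup\overline{K}_1$. In the complementary situation every other vertex is adjacent to both endpoints of every heavy edge; here one works with the heavy edge $e_0$ together with the remaining non-adjacent pair $\{p,q\}$: if any of the connecting edges has label $\geq 4$ one again finds a heavy path on three vertices, and the only case left is the $4$-vertex configuration $S_{2\ell_0}*_2\overline{K}_2$, which decomposes as a $2$-join, so that Proposition~\ref{prop:QP}\eqref{QP2J} (recursing on the proper factors) together with the elementary cases closes the argument.

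The core of the proof is then the verification of~\ref{subgrafo-prop-1}--\ref{subgrafo-prop-3} for each elementary block $\Gamma_1$. For this I would take $u$ to be the endpoint of the heavy edge adjacent to the third vertex and choose a convenient modulus $k$ (for instance $k=2$, or $k$ a divisor of $\ell_0$), write the standard presentation of $\A_{\Gamma_1,u,k}$ from Theorem~\ref{thm-pres}, and assemble its Alexander matrix $M_{\Gamma_1,u,k}$ from the block description of Lemma~\ref{lem-Muk} using the explicit Fox derivatives of Lemmas~\ref{lemma-foxA} and~\ref{lemma-foxB}. From the corank bound $\corank M_{\Gamma_1,u,k}\leq 1$ (Lemma~\ref{rankAM}) one identifies the top non-trivial characteristic variety; using the factor $p_{\ell_0}$ carried by the heavy relation together with Lemma~\ref{AnPA1}, one produces two ideals $\hat I_1,\hat I_2\subset\hat\Lambda_k$ cutting out translated subtori lying in characteristic varieties $V_{r_1}(\hat G_k)$ and $V_{r_2}(\hat G_k)$, whose intersection is visibly positive-dimensional and which either lie in a deeper $V_r(\hat G_k)$ or are prime. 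This contradicts Proposition~\ref{Fin}, so $\A_{\Gamma_1}$ is not quasi-projective, and Lemma~\ref{subgrafo} upgrades this to $\A_\Gamma$.

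The step I expect to be the main obstacle is exactly this last computation: producing a \emph{second} component $\hat I_2$ of the characteristic variety that meets $\hat I_1$ in dimension $\geq 1$ without coinciding with it. This is where the hypothesis $m_{e_0}\geq 4$ is used in an essential way — for label $2$ the polynomial $p_{1}(t)\equiv 1$, the extra component collapses, and indeed the right-angled case is genuinely quasi-projective by Theorem~\ref{Suciu2}; keeping careful track of the factors $p_l(t)=\tfrac{t^l-1}{t-1}$ and of which minors of $M_{\Gamma_1,u,k}$ vanish on which subtorus is the delicate bookkeeping involved. (An alternative route, closer to the proof of Theorem~\ref{Suciu2}, would replace characteristic varieties of the co-cyclic subgroups by their resonance varieties and use the isotropicity/tangent-cone obstruction in place of Proposition~\ref{Fin}.)
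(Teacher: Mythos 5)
This statement is not proved in the paper at all: it is imported verbatim from \cite[Thm.~5.26]{ACM-artin}, and the paper's only gloss on the argument is that for a non-complete strictly even graph the characteristic varieties of $\A_\Gamma$ \emph{itself} already contain two irreducible components meeting in positive dimension, contradicting Proposition~\ref{Fin}. In particular no passage to finite-index subgroups is needed here; the co-cyclic machinery of Lemma~\ref{subgrafo}, Theorem~\ref{thm-pres} and the matrices $M_{\Gamma,u,k}$ is reserved in this paper for the \emph{complete} graphs, precisely because (as stated at the start of Section~\ref{sec-preliminaries}) the characteristic varieties of the even Artin group itself fail to detect anything in that case. Your route is therefore much heavier than necessary, but that alone would not be a flaw.

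The genuine gap is in your graph-theoretic reduction, in the terminal case you call ``the only case left.'' If every vertex outside the heavy edge $e_0=\{u,v\}$ is adjacent to both $u$ and $v$, and all connecting labels as well as the labels at the chosen non-adjacent pair $\{p,q\}$ are $2$, the induced subgraph on $\{u,v,p,q\}$ is $S_{2\ell_0}*_2\overline{K}_2$, which is \emph{quasi-projective}; exhibiting a quasi-projective $v$-subgraph yields no obstruction whatsoever, and Proposition~\ref{prop:QP}\eqref{QP2J} points in the wrong direction: it builds quasi-projective groups out of quasi-projective factors, and neither it nor anything else in the paper lets you ``recurse on the proper factors'' to conclude that a product with a non-quasi-projective factor is non-quasi-projective. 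Concretely, take $\Gamma$ on five vertices $u,v,p,q,r$ with $m_{\{u,v\}}=4$, all six edges from $\{p,q,r\}$ to $\{u,v\}$ and the edge $\{p,r\}$ labelled $2$, and $\{p,q\}$, $\{q,r\}$ non-edges. This graph is strictly even, non-complete, and not quasi-projective (it is not a $2$-join of the irreducible blocks, since $\{p,q,r\}$ induces $K_2\sqcup\overline{K}_1$), yet your case analysis terminates on it without producing any non-quasi-projective induced subgraph: the only heavy edge is $\{u,v\}$, every other vertex sees both its endpoints, and all the relevant labels are $2$. The obstruction here lives entirely in the right-angled part and is invisible to a reduction that pivots on a heavy edge. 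Separately, even in the cases your reduction does reach (heavy paths on three vertices, $S_{2\ell}\sqcup\overline{K}_1$), the actual verification of conditions \ref{subgrafo-prop-1}--\ref{subgrafo-prop-3} --- the construction of the two ideals $\hat I_1,\hat I_2$ --- is only announced, and you yourself flag it as the main obstacle; as it stands the proposal contains neither a complete reduction nor the computations that would feed it.
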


This result is shown by proving that the characteristic varieties of the Artin groups 
of non-complete strictly even graphs contain two irreducible components having a 
positive dimensional intersection, which contradicts Proposition~\ref{Fin}. 

Note that $\QP$-irreducible even graphs --\,other than a point\,-- are necessarily strictly 
even. Hence, the purpose of the rest of the section is to study complete $\QP$-irreducible 
graphs of three or more vertices.

\subsection{Complete $\text{QP}$-irreducible graphs with 3 vertices}

\begin{thm}\label{Tri1}
Assume $\Gamma$ is an even $v$-supergraph of $T(2r,2k,2\ell)$ with $r\geq 3$ and $k\geq 2$ 
--\,see Figure~\ref{fig-teo31}. Then $\A_{\Gamma}$ is not quasi-projective.
\begin{center}
\begin{figure}[h!]
\begin{tikzpicture}[vertice/.style={draw,circle,fill,minimum size=0.3cm,inner sep=0}]
\node[vertice] (D1) at (1,.5) {$w$};
\node[vertice] (E1) at (0,1.5) {$u$};
\node[vertice] (F1) at (-1,.5) {$v$};
\draw[very thick] (D1)--(F1)--(E1)--(D1);
\node[left] at (-0.5,1.15) {$2r$};
\node[right] at (0.5,1.15) {$2k$};
\node[below] at (0,.5) {$2\ell$};
\node[left] at (-1.3,.5) {$v$};
\node[right] at (1.3,.5) {$w$};
\node[above] at (0,1.8) {$u$};
\end{tikzpicture}
\caption{}
\label{fig-teo31}
\end{figure}
\end{center}
\end{thm}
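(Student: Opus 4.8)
The plan is to apply Lemma~\ref{subgrafo} to the triangle $\Gamma_1=T(2r,2k,2\ell)$ with the distinguished vertex~$u$ (the one incident to the two edges with labels $2r$ and $2k$) and a suitable integer~$k_0>1$, producing two ideals $\hat I_1,\hat I_2$ in $\hat\Lambda_{k_0}=\CC[H_1(\A_{T,u,k_0})]$ whose zero sets lie in characteristic subvarieties and whose sum still cuts out a positive-dimensional set, while either condition~\ref{subgrafo-prop-2} or~\ref{subgrafo-prop-2b} holds. The \emph{moreover} clause of Lemma~\ref{subgrafo} then immediately upgrades non-quasi-projectivity from the triangle to any even $v$-supergraph~$\Gamma$, so the whole content is the analysis on the triangle itself. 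The natural choice is $k_0=2$: then the vertex $w$ (the one joined to $u$ by the label-$2k$ edge) splits into $w_0,w_1$, the vertex $v$ splits into $v_0,v_1$ (unless the $v$–$u$ label forces $v\in V_{2,u}$, i.e. $r=1$, which is excluded since $r\ge 3$), and the edge $\{v,w\}$ with label $2\ell$ gives relations $A_\ell(v_i,w_i)$ for $i=0,1$, together with the $(RB)$-blocks $M_{B(v,\bar u)}$ and $M_{B(w,\bar u)}$.

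First I would write down $M_{\A_{T,u,2}}$ explicitly using Lemma~\ref{lem-Muk}, Lemma~\ref{lemma-foxA}, and Lemma~\ref{lemma-foxB}, so the rows are: two copies of the Artin relation on $\{v_i,w_i\}$ (each carrying the factor $p_\ell(t_{v,i}t_{w,i})$), the block $M_{B(v,\bar u)}$ on $\{v_0,v_1,\bar u\}$, and the block $M_{B(w,\bar u)}$ on $\{w_0,w_1,\bar u\}$. By Lemma~\ref{rankAM} the corank is at most~$1$, so $V_1$ of the co-cyclic group is the vanishing of the maximal minors. The key is that $M_{B(w,\bar u)}$ (resp. $M_{B(v,\bar u)}$) drops rank, by Lemma~\ref{AnPA1} and its odd-label analogue, along the coordinate subtorus where $t_{\bar u}t_{w,0}t_{w,1}$ is a suitable root of unity (resp. $t_{\bar u}t_{v,0}t_{v,1}$), and the Artin rows drop rank where $p_\ell(t_{v,i}t_{w,i})=0$, i.e. where $t_{v,i}t_{w,i}$ is a nontrivial $\ell$-th root of unity. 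I would pick $\hat I_1$ cutting out a component $H_1$ governed by a $p_r$- or $p_k$-type vanishing coming from the $2r$- and $2k$-edges (this is where $r\ge3$, $k\ge2$ is used — it guarantees $p_r$, $p_k$ are nonconstant so these loci are honest positive-dimensional translated subtori), and $\hat I_2$ cutting out $H_2$ from the $2\ell$-edge or from the complementary $(RB)$-block, arranged so that $H_1\not\subset Z(\hat I_2)$, $H_2\not\subset Z(\hat I_1)$, and $H_1\cap H_2$ is positive-dimensional — concretely, both are codimension-$1$ (or low-codimension) translated subtori in a torus of dimension $\ge 3$, so their intersection cannot be finite.

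The main obstacle I expect is the bookkeeping needed to verify the \emph{jump-depth} condition~\ref{subgrafo-prop-2}: I must check that on the intersection locus $Z(\hat I_1+\hat I_2)$ the Alexander matrix $M_{\A_{T,u,2}}$ drops rank by \emph{at least two} (so that this locus lies in $V_r$ with $r>\max\{r_1,r_2\}$), rather than merely in $V_1$. This is precisely the point where both the Artin-type row factors $p_\bullet$ \emph{and} the $(RB)$-block rank drop have to conspire; I would handle it by evaluating at a generic point of $Z(\hat I_1+\hat I_2)$ and exhibiting two independent syzygies among the rows — one coming from the classical triangle relation (the linear dependence among $f_e,f_{e_1},f_{e_2}$ as in the rank computation of $M_\Gamma$, pulled back to the cover) and one from the internal rank defect of the relevant $M_{B(\cdot,\bar u)}$ block given by Lemma~\ref{AnPA1}. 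Alternatively, when arranging the depth count turns out to be delicate, I would instead verify the simpler option~\ref{subgrafo-prop-2b}, namely that the two ideals $\hat I_1,\hat I_2$ can be chosen \emph{prime} (each being the ideal of a single translated subtorus, e.g. $(t_{v,0}t_{w,0}-\zeta)$ or $(t_{\bar u}t_{w,0}t_{w,1}-\zeta')$ for an appropriate root of unity~$\zeta,\zeta'$), which sidesteps the depth computation entirely and still contradicts Proposition~\ref{Fin} via Lemma~\ref{subgrafo}.
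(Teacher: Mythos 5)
Your overall strategy---pass to a co-cyclic subgroup of the triangle, exhibit two ideals satisfying (C1)--(C3) of Lemma~\ref{subgrafo}, and let the \emph{moreover} clause propagate to $v$-supergraphs---is exactly the paper's. But the concrete commitment you make, namely to work uniformly with the index-$2$ subgroup $\A_{T,u,2}$, is where the argument breaks, and the paper's proof is structured precisely around this difficulty: it uses \emph{four different covers} depending on the parameters --- the index-$r$ subgroup at $u$ when $r\geq 4$, the index-$3$ subgroup at $u$ for $T(6,6,2\ell)$, the index-$2$ subgroup at $u$ only for $T(6,4,4)$, and the index-$3$ subgroup at the \emph{other} vertex $v$ for $T(6,4,2)$. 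The choice of index and base vertex is not a convenience; it is forced by the three obstacles your plan glosses over.

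Concretely: (a) the theorem allows $\ell=1$, and then $p_\ell\equiv 1$, so your proposed second ideal ``from the $2\ell$-edge'' (e.g.\ $(t_{v,0}t_{w,0}-\zeta)$ with $\zeta$ a nontrivial $\ell$-th root of unity) does not exist; this is exactly the case $T(6,4,2)$ where the paper abandons the vertex $u$ altogether and takes the index-$3$ cover at $v$. (b) Lemma~\ref{AnPA1} requires $\ell_e\equiv 0\bmod k$, so over the double cover it only controls the blocks $M_{B(\cdot,\bar u)}$ of edges whose label is divisible by $4$; the ``odd-label analogue'' you invoke for odd $r$ or $k$ is not in the paper and would have to be stated and proved. (c) Most importantly, the irreducible components you need are \emph{not} codimension-one translated subtori. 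To put $Z(\hat I_i)$ inside $V_1$ you must force the corank of the Alexander matrix from $\leq 1$ up to $\geq 2$, which requires annihilating many entries simultaneously: the paper's ideals in the generic case are of the shape $I_1=(1-t_{\bar u}\bar t_v,\,t_{v,0}-1,\,t_{v,1}-1,\,t_{w,0}-1,\,t_{w,1}-1)$, of codimension about $5$ in a $(2r+1)$-dimensional torus. Condition (C2), $\dim Z(\hat I_1+\hat I_2)\geq 1$, is then the genuine bottleneck --- it is the reason the hypothesis $r\geq 4$ appears in the main case (to keep $t_{\bar u}$ free after imposing both ideals) and the reason the boundary cases $r=3$ need hand-tailored covers. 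Your heuristic that ``two low-codimension translated subtori in a torus of dimension $\geq 3$ cannot meet in a finite set'' is simply not available once the ideals have the codimension they actually need. So the plan, as written, would fail for $\ell=1$ and is unsubstantiated for odd labels and for the dimension count; repairing it leads you back to the paper's case analysis.
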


\begin{proof}
Without loss of generality, one can assume $r\geq k \geq \ell$. 
Four separate cases will be considered. The first case will be shown in detail.
The remaining cases follow analogously.
\begin{enumerate}
\item In case $r\geq 4$, $k\geq 2$, one can consider the index $r$ subgroup 
$\A_{T,u,r}\subset \A_{T}$, where $T=T(2r,2k,2\ell)$. 
According to Lemma~\ref{lem-Muk} $M_{T,u,r}$ has two $B$-Artin submatrices 
$M_{B(v,\bar u)}$ and $M_{B(w,\bar u)}$ of $r$ rows each and an Artin submatrix of $r$ 
rows (corresponding to the $r$ relations $A_{\ell}(v_i,w_i)$ for $i\in \ZZ_r$.
Hence $M_{T,u,r}$ is a $3r\times (2r+1)$ matrix whose corank is $\leq 1$ by Lemma~\ref{rankAM}.		
Let us define $p=1-t_{\bar u}\bar t_{v}$ and consider the following ideals
\begin{align*}
& I_{1}=(p,t_{v,0}-1,t_{v,1}-1,t_{w,0}-1,t_{w,1}-1)\\
& I_{2}=(p,t_{v,0}-1,t_{v,2}-1,t_{w,0}-1,t_{w,2}-1).
\end{align*}
Note that $\rank (M_{B(v,\bar u)}|_{I_{i}})=1$ by Lemma~\ref{AnPA1}. In addition, note that the 
first two rows of $M_{T,u,r}|_{I_{1}}$ are zero and also the first and third rows of $M_{T,u,r}|_{I_{2}}$.
Summarizing, $M_{T,u,r}|_{I_{i}}$ contains three submatrices $M_{i,A}$, $M_{i,B(v,\bar u)}$, and $M_{i,B(w,\bar u)}$,
where $\rank (M_{i,A})\leq r-2$, $\rank (M_{i,B(v,\bar u)})=1$, and $\rank (M_{i,B(w,\bar u)})\leq r$. Therefore
$\rank (M_{T,u,r}|_{I_{i}})\leq 2r-1$. Since $M_{T,u,r}$ has $2r+1$ columns, one has
$$\corank (M_{T,u,r}|_{I_{i}})\geq 2 >
\corank M_{T,u,r}.$$
Also
$$\corank (M_{T,u,r}|_{I_{1}+I_{2}})< 
\max\{\corank M_{T,u,r}|_{I_{1}},\corank M_{T,u,r}|_{I_{2}}\},$$
since $I_1+I_2$ makes one extra row vanish, which is originally independent from the others.
Finally, $Z(I_1+I_2)$ has dimension $\geq 1$ since the variable $t_{\bar u}$ is free 
(since $r\geq 4$, $P$ gives a relation between $t_{v,3}$ and $t_{\bar u}$ but does not fix any of them).
Therefore, by Lemma~\ref{subgrafo}, $\A_{\Gamma}$ is not quasi-projective.

\item Case $r=3$ and $k=3$ can be treated by considering 
$\A_{T,u,3}\subset \A_{T}$ and the ideals:
$$
\array{l}
I_{1}=(t_{\bar u}-1,t_{v,1}-1,t_{v,2}-1,t_{w,0}-1,t_{w,1}-1)\\
I_{2}=(t_{\bar u}-1,t_{v,1}-1,t_{v,2}-1,t_{w,0}-1,t_{w,2}-1).
\endarray
$$

\item Case $r=3$, $k=\ell=2$ follows after considering 
the subgroup $\A_{T,u,2}\subset \A_{T}$ of index $2$ and the ideals:
$$
\array{l}
I_{1}=(1-t_{\bar u}\bar t_w,1-t_{\bar u}\bar t_v,p_0)\\
I_{2}=(1-t_{\bar u}\bar t_w,1-t_{\bar u}\bar t_v,p_1),
\endarray
$$
with $p_i=1+t_{w,i}t_{v,i}+t_{w,i}^2t_{v,i}^2$, $i=0,1$.

\item Finally, if $r=3$, $k=2$, and $\ell=1$, the result follows after
considering $\A_{T,v,3}\subset\A_{T}$ the subgroup of index 3
and the following ideals:
$$
\array{l}
I_{1}=(p_0,p_1,1-t_{\bar v}\bar t_u)\\
I_{2}=(p_0,p_2,1-t_{\bar v}\bar t_u),
\endarray
$$
where $p_i=1+t_wt_{u,i}$, $i=0,1,2$.
\end{enumerate}
\end{proof}

\begin{thm}\label{Tri2}
Assume $\Gamma$ is an even $v$-supergraph of $T(4,4,4)$. Then, $\A_{\Gamma}$ is not quasi-projective.
\end{thm}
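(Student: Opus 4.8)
The plan is to apply Lemma~\ref{subgrafo} to $\Gamma_1=T:=T(4,4,4)$: once $\A_T$ has been shown not to be quasi-projective by producing suitable ideals, the \emph{moreover} clause of that lemma upgrades the conclusion to every even $v$-supergraph $\Gamma$ of $T$ simultaneously (its parity hypothesis holds since $\Gamma$ is an even graph). Fix a vertex $u$ of $T$ and set $k=2$, so that $\A_{T,u,2}\subset\A_T$ has index~$2$. Since all three labels of $T$ equal $4$, we have $V_{2,u}=\varnothing$ and $W=\{v,w\}$, so by Lemma~\ref{lem-Muk} the Alexander matrix $M_{T,u,2}$ has the five columns $v_0,v_1,w_0,w_1,\bar u$ and six rows: the two classical Artin rows of $A_2(v_i,w_i)$, $i\in\ZZ_2$ (coming from the edge $\{v,w\}$, $\ell=2$), together with the blocks $M_{B(v,\bar u)}$ and $M_{B(w,\bar u)}$ (coming from the edges through~$u$, both of $\ell$-value $2\equiv 0\bmod 2$). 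By~\eqref{eq-homology} with $d_e=\gcd(2,2)=2$ no generator is identified with another, so $H_1(\A_{T,u,2})$ is free of rank~$5$, the characteristic varieties lie in $(\CC^*)^5$, and $\corank M_{T,u,2}\le1$ by Lemma~\ref{rankAM}.

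I would then work with the ideals
\begin{align*}
I_1&=\bigl(\,1-t_{\bar u}t_{v,0}t_{v,1},\ 1-t_{\bar u}t_{w,0}t_{w,1},\ 1+t_{v,0}t_{w,0}\,\bigr),\\
I_2&=\bigl(\,1-t_{\bar u}t_{v,0}t_{v,1},\ 1-t_{\bar u}t_{w,0}t_{w,1},\ 1+t_{v,1}t_{w,1}\,\bigr)
\end{align*}
of $\hat\Lambda_2=\CC[H_1(\A_{T,u,2})]$; their first two generators are precisely the ideals $\mathfrak p$ of Lemma~\ref{AnPA1} for the two edges through~$u$. On $Z(I_i)$ that lemma gives $\rank M_{B(v,\bar u)}\le1$ and $\rank M_{B(w,\bar u)}\le1$, and the Artin row $f_{v_i,w_i}$ of $A_2(v_i,w_i)$ vanishes, since by Lemma~\ref{lemma-foxA} it is $p_2(t_{v,i}t_{w,i})=1+t_{v,i}t_{w,i}$ times a fixed row and $1+t_{v,i}t_{w,i}\in I_i$; hence $\corank M_{T,u,2}|_{I_i}\ge 2>1\ge\corank M_{T,u,2}$, which gives $Z(I_i)\subset V_{r_i}(\A_{T,u,2})$ with $r_i\ge1$ (condition~\ref{subgrafo-prop-1}). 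On $Z(I_1+I_2)$ \emph{both} Artin rows vanish, so only the two rank-one $B$-blocks survive and $\corank M_{T,u,2}|_{I_1+I_2}\ge 3>\max\{\corank M_{T,u,2}|_{I_1},\corank M_{T,u,2}|_{I_2}\}$, i.e.\ $Z(I_1+I_2)\subset V_r(\A_{T,u,2})$ for some $r>\max\{r_1,r_2\}$ (condition~\ref{subgrafo-prop-2}). Finally $I_1+I_2$ is generated by four elements of the five-dimensional torus and is non-empty --\,for instance $(t_{\bar u},t_{v,0},t_{v,1},t_{w,0},t_{w,1})=(1,1,1,-1,-1)$ annihilates all four generators\,-- so $\dim Z(I_1+I_2)\ge1$ (condition~\ref{subgrafo-prop-3}), and Lemma~\ref{subgrafo} concludes the proof.

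The point that needs care is condition~\ref{subgrafo-prop-2}: one must be sure the rank bound $\le2$ on $Z(I_1+I_2)$ genuinely improves the bound $\le3$ valid on each $Z(I_i)$, and this is exactly what the separate third generators $1+t_{v,0}t_{w,0}$ and $1+t_{v,1}t_{w,1}$ achieve --\,each of $I_1,I_2$ kills one of the two Artin rows and the sum kills both. One should also confirm that $Z(I_1+I_2)$ is honestly positive-dimensional (rather than empty or zero-dimensional); parametrising by $t_{v,0}$ and solving the four equations exhibits a curve, so $\dim Z(I_1+I_2)=1$. The rest is bookkeeping with Lemmas~\ref{lem-Muk}, \ref{AnPA1}, \ref{lemma-foxA} and~\ref{subgrafo}, running along the same lines as the proof of Theorem~\ref{Tri1}.
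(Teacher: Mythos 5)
Your proposal is correct and takes essentially the same route as the paper: the same index-2 co-cyclic subgroup $\A_{T,u,2}$, literally the same ideals $I_1,I_2$ (the paper writes them as $(1-t_{\bar u}\bar t_v,\,1-t_{\bar u}\bar t_w,\,p_i)$ with $p_i=1+t_{v,i}t_{w,i}$), and the same corank argument via Lemmas~\ref{lem-Muk}, \ref{AnPA1}, \ref{lemma-foxA} and~\ref{subgrafo}. Your explicit point $(1,1,1,-1,-1)$ verifying $Z(I_1+I_2)\neq\varnothing$ is a small extra check that the paper leaves implicit.
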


\begin{proof}
Consider $T=T(4,4,4)$ with vertices $V=\{u,v,w\}$ and the index 2 co-cyclic subgroup $\A_{T,u,2}\subset \A_{T}$. 
According to Lemma~\ref{lem-Muk} the Alexander matrix of the associated group is
$$\displaystyle
M_{T,u,2}=\begin{blockarray}{ccccc}
  v_{0}& w_{0} & v_{1}  &w_{1} & \bar u \\
\begin{block}{( ccccc)}
 p_{0}(1-t_{w,0}) & p_{0}(t_{v,0}-1) & 0 & 0 & 0 \\
 0 & 0 & p_{1}(1-t_{w,1}) & p_{1}(t_{v,1}-1) & 0\\
 t_{\bar u}-1 & 0 & t_{v,0}(t_{\bar u}-1) & 0 & 1-t_{v,0}t_{v,1}  \\
 1-t_{v,1}t_{\bar u} & 0 & t_{v,0}-1 & 0 & t_{v,1}(t_{v,0}-1) \\
 0 & t_{\bar u}-1 & 0 & t_{w,0}(t_{\bar u}-1) & 1-t_{w,0}t_{w,1}  \\
 0 & 1-t_{w,1}t_{\bar u} & 0 & t_{w,0}-1 & t_{w,1}(t_{w,0}-1)  \\
\end{block}	
\end{blockarray}
$$
with $p_i=1+t_{v,i}t_{w,i}$, $i=0,1$.
By Lemma~\ref{rankAM}, $M_{T,u,2}$ has corank $\leq 1$. Consider the ideals	
\begin{align*}
& I_{1}=(1-t_{\bar u}\bar t_v,1-t_{\bar u}\bar t_w,p_0)\\
& I_{2}=(1-t_{\bar u}\bar t_v,1-t_{\bar u}\bar t_w,p_1).\\
\end{align*}
By Lemma~\ref{AnPA1}, it is clearly seen that $M_{T,u,2}|I_{i}$ has corank $2$. Therefore
$$2=\max\{\corank M_{T,u,2}|_{I_{1}},\corank M_{T,u,2}|_{I_{2}}\}>1\geq\corank M_{T,u,2}$$
It is also easy to see that $M_{T,u,2}|_{I_1+I_2}$ has corank $3$, which implies
$$\corank M_{T,u,2}|_{I_{1}+I_{2}}=3 > \max\{\corank M_{T,u,2}|_{I_{1}},\corank M_{T,u,2}|_{I_{2}}\}=2.$$
Moreover, $Z(I_1+I_2)$ has dimension $\geq 1$ since $I_1+I_2$ is generated by four polynomials in five variables.
Therefore, by Lemma~\ref{subgrafo}, $\A_{\Gamma}$ is not quasi-projective.
\end{proof}

The previous results combined prove the following.

\begin{cor}
\label{cor-qp}
The only strictly even complete $\QP$-graphs with three vertices 
are $T(2\ell,2,2)$ with $\ell\geq 2$ and~$T(4,4,2)$. Moreover, the latter is
the only $\QP$-irreducible even graph with three vertices.
\end{cor}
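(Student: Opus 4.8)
The strategy is to combine the two main obstruction theorems of this subsection, Theorem~\ref{Tri1} and Theorem~\ref{Tri2}, with the classification of non-complete strictly even graphs (Theorem~\ref{thm-complete}) and the positive realizability results (Theorems~\ref{Suciu2} and~\ref{BP}), to pin down exactly which even graphs on three vertices yield quasi-projective Artin groups. First I would observe that a $\QP$-irreducible even graph on three vertices cannot be right-angled: by Theorem~\ref{Suciu2} the only $\QP$-irreducible right-angled graphs are the $\overline{K}_r$, and a right-angled graph on three vertices that is a $\QP$-graph must be a $2$-join of $\overline{K}_r$'s, hence not $\QP$-irreducible. So we may assume the graph is strictly even. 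By Theorem~\ref{thm-complete}, a strictly even non-complete graph on three or more vertices is never quasi-projective; therefore any strictly even $\QP$-graph on three vertices is the complete graph, i.e.\ a triangle $T(2a,2b,2c)$ with all three labels even and at least one of them $\geq 4$ (strict evenness).

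Next I would run through the possible label triples $(2a,2b,2c)$ using the obstruction theorems. Up to symmetry assume $a\geq b\geq c\geq 1$. Theorem~\ref{Tri1} says that if $a\geq 3$ and $b\geq 2$ then $\A_{T(2a,2b,2c)}$ is not quasi-projective; as a $v$-supergraph of itself, this rules out every triangle with its two largest labels both $\geq 4$ as well as those with largest label $\geq 6$ and middle label $\geq 4$. Theorem~\ref{Tri2} rules out $T(4,4,4)$. The triples \emph{not} excluded by these two results are precisely those with $b=c=1$, i.e.\ $T(2\ell,2,2)$ with $\ell\geq 2$ — where the obstruction in Theorem~\ref{Tri1} fails because it requires the second label $2k$ to have $k\geq 2$ — together with $T(4,4,2)$, which has $a=b=2$, $c=1$, so that $a\geq 3$ fails. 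One must double-check that $T(2\ell,2,2)$ and $T(4,4,2)$ are indeed quasi-projective: for $T(4,4,2)$ this is exactly $\A_T$ in Theorem~\ref{BP}, and for $T(2\ell,2,2)$ one notes it is the $2$-join $S_{2\ell}*_2 \overline{K}_1$ (the third vertex is joined to the other two by label-$2$ edges), so $\A_{T(2\ell,2,2)}=\A_{S_{2\ell}}\times\ZZ$ is quasi-projective by Theorem~\ref{BP} and Proposition~\ref{prop:QP}\eqref{QP2J}.

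Finally I would extract the $\QP$-irreducibility statement. Among the surviving graphs, $T(2\ell,2,2)$ is a $2$-join of the $\QP$-graphs $S_{2\ell}$ and $\overline{K}_1$, hence is not $\QP$-irreducible. The graph $T(4,4,2)$, on the other hand, is not a $2$-join of two $\QP$-graphs: a $2$-join decomposition of a triangle into two nonempty pieces would force one piece to be a single vertex and the two edges joining it to the other piece to have label $2$, but $T(4,4,2)$ has only one label-$2$ edge, so no vertex is connected to both others by label-$2$ edges; thus no such decomposition exists and $T(4,4,2)$ is $\QP$-irreducible. This establishes that $T(2\ell,2,2)$ ($\ell\geq 2$) and $T(4,4,2)$ are the only strictly even complete $\QP$-graphs on three vertices, and that $T(4,4,2)$ is the unique $\QP$-irreducible even graph on three vertices.

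\textbf{Main obstacle.} The substantive content is entirely carried by Theorems~\ref{Tri1}, \ref{Tri2}, \ref{thm-complete} (cited) and the realizations in Theorem~\ref{BP}; the corollary itself is a bookkeeping argument. The one point requiring genuine care is the case analysis matching label triples against the hypotheses $r\geq 3$, $k\geq 2$ in Theorem~\ref{Tri1} — in particular verifying that the triples left uncovered by Theorems~\ref{Tri1} and~\ref{Tri2} are \emph{exactly} $T(2\ell,2,2)$ and $T(4,4,2)$ and nothing more, and that each of these two is actually realized as quasi-projective rather than merely not-excluded.
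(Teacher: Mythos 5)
Your proof is correct and is essentially the paper's own argument: the corollary is presented there as an immediate combination of Theorems~\ref{thm-complete}, \ref{Tri1}, \ref{Tri2}, \ref{BP} and~\ref{Suciu2} (``the previous results combined''), which is exactly the bookkeeping you carry out, including the realization of $T(2\ell,2,2)$ as the $2$-join $S_{2\ell}*_2\overline{K}_1$ and the label-$2$-edge argument for the irreducibility of $T(4,4,2)$. The one blemish is the clause asserting that Theorem~\ref{Tri1} ``rules out every triangle with its two largest labels both $\geq 4$'' --- taken literally that would wrongly exclude $T(4,4,2)$ itself, since the theorem requires the largest label to be $\geq 6$ (i.e.\ $r\geq 3$), as your own subsequent case analysis correctly uses; that clause should simply be deleted.
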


\subsection{$\text{QP}$-irreducible even graphs with 4 vertices}
As an immediate consequence of Theorems~\ref{Tri1},~\ref{Tri2}, and~\ref{thm-complete}, the only candidates to 
$\QP$-irreducible even graphs with $4$ vertices must be complete even $v$-supergraphs of either 
$T(2\ell,2,2)$ or~$T(4,4,2)$. Figure~\ref{fig-teo5} shows all possible such graphs.

\begin{center}
\begin{figure}[h!]

\subfigure[{}]{\makebox[.45\textwidth]{
\begin{tikzpicture}[scale=.7,vertice/.style={draw,circle,fill,minimum size=0.3cm,inner sep=-1}]
\node[vertice,label=left:$w_1$] (F1) at (-3,0) {};
\node[vertice,label=right:$w_2$] (E2) at (3,0) {};
\node[vertice,label=above left:$w_3$] (E1) at (0,1) {};
\node[vertice,label=above:$u$] (D1) at (0,4) {};
\draw[very thick] (D1)--(F1)--(E1)--(E2)--(D1);
\draw[very thick] (D1)--(E1);
\draw[very thick] (F1)--(E2);
\node[left] at (-1.7,2) {$4$};
\node[right] at (1.7,2) {$4$};
\node[left] at (-1.2,.8) {$2$};
\node[right] at (1.2,.8) {$2$};
\node[below] at (0,0) {$2$};
\node[right] at (0,2.3) {$4$};
\end{tikzpicture}
}
\label{fig:dibujo1}
}
\hfill
\subfigure[{}]{
\begin{tikzpicture}[scale=.7,vertice/.style={draw,circle,fill,minimum size=0.3cm,inner sep=-1}]
\node[vertice,label=left:$v$] (F1) at (-3,0) {};
\node[vertice,label=right:$w_1$] (E2) at (3,0) {};
\node[vertice,label=above left:$w_2$] (E1) at (0,1) {};
\node[vertice,label=above:$u$] (D1) at (0,4) {};
\draw[very thick] (D1)--(F1)--(E1)--(E2)--(D1);
\draw[very thick] (D1)--(E1);
\draw[very thick] (F1)--(E2);
\node[left] at (-1.7,2) {$2$};
\node[right] at (1.7,2) {$4$};
\node[left] at (-1.2,.8) {$2$};
\node[right] at (1.2,.8) {$2$};
\node[below] at (0,0) {$4$};
\node[right] at (0,2.3) {$4$};
\end{tikzpicture}
\label{fig:dibujo2}
}
\subfigure[{}]{
\begin{tikzpicture}[scale=.7,vertice/.style={draw,circle,fill,minimum size=0.3cm,inner sep=-1}]
\node[vertice,label=left:$w_1$] (F1) at (-3,0) {};
\node[vertice,label=right:$w_2$] (E2) at (3,0) {};
\node[vertice,label=above left:$v$] (E1) at (0,1) {};
\node[vertice,label=above:$u$] (D1) at (0,4) {};
\draw[very thick] (D1)--(F1)--(E1)--(E2)--(D1);
\draw[very thick] (D1)--(E1);
\draw[very thick] (F1)--(E2);
\node[left] at (-1.7,2) {$4$};
\node[right] at (1.7,2) {$4$};
\node[left] at (-1.2,.8) {$4$};
\node[right] at (1.2,.8) {$4$};
\node[below] at (0,0) {$2$};
\node[right] at (0,2.4) {$2$};
\end{tikzpicture}
\label{fig:dibujo3}
}
\caption{}
\label{fig-teo5}
\end{figure}
\end{center}

This list can easily be obtained using the following observation.

\begin{lem}
Any $\QP$-irreducible even graph with at least 3 vertices has labels no larger than~$4$.
\end{lem}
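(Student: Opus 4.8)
The statement claims that any $\QP$-irreducible even graph with at least three vertices has all labels at most $4$. The strategy is to show that the presence of an edge with label $2\ell$ with $\ell\geq 3$ forces $\A_\Gamma$ to be non-quasi-projective whenever the graph has a third vertex. This will be an application of the obstruction machinery already assembled: Theorem~\ref{thm-complete} reduces us to complete graphs (a $\QP$-irreducible even graph other than a point is strictly even, so it must be complete), and Theorem~\ref{Tri1} handles triangles $T(2r,2k,2\ell)$ with $r\geq 3$, $k\geq 2$. So the only thing left to rule out is a complete graph on three or more vertices containing an edge of label $2\ell$, $\ell\geq 3$, together with a third vertex adjacent to one endpoint of that edge by an even label.

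\textbf{Key steps.} First I would observe that, since the graph has $\geq 3$ vertices and is complete, any edge $e=\{u,v\}$ of label $2\ell$ sits inside a triangle $T=T(2\ell,2k,2m)$ on vertices $\{u,v,w\}$ for appropriate even labels $2k,2m$ on the other two edges (recall all labels are $\geq 2$, i.e. $k,m\geq 1$). If $\ell\geq 3$, then after relabeling the vertices so that the label $2\ell$ plays the role of the first entry, we have a triangle $T(2r,2k,2\ell')$ with $r=\ell\geq 3$ and $k\geq 1$. When the second label also satisfies $k\geq 2$, Theorem~\ref{Tri1} applies directly to conclude $\A_T$, and hence $\A_\Gamma$ (which is an even $v$-supergraph of $T$), is not quasi-projective. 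The remaining subcase is the triangle $T(2\ell,2,2)$ with $\ell\geq 3$: but by Corollary~\ref{cor-qp} the only strictly even complete $\QP$-graphs on three vertices are $T(2\ell,2,2)$ with $\ell\geq 2$ and $T(4,4,2)$ — the former \emph{is} quasi-projective. So a $\QP$-irreducible graph cannot simply \emph{be} $T(2\ell,2,2)$ with $\ell\geq 3$ unless it is $\QP$-irreducible; but here the point is that $T(2\ell,2,2)$ is itself the $2$-join $S_{2\ell}*_2\overline{K}_1$ (the vertices of label $2$ to $u$ are exactly the join vertices), hence it is \emph{not} $\QP$-irreducible. Therefore a $\QP$-irreducible graph on $\geq 3$ vertices containing an edge of label $2\ell\geq 6$ would have to properly contain such a configuration, and then Theorem~\ref{Tri1} kicks in.

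\textbf{Assembling the argument.} Concretely: let $\Gamma$ be $\QP$-irreducible on $\geq 3$ vertices and suppose some edge has label $2\ell$ with $\ell\geq 3$. By Theorem~\ref{thm-complete}, $\Gamma$ is complete. Pick that edge $\{u,v\}$ and any third vertex $w$; this gives a triangle $T(2\ell, 2k, 2m)\subset_v\Gamma$. If $k\geq 2$ or $m\geq 2$ (after possibly swapping the roles of the two non-distinguished edges so the larger one is the ``middle'' label), Theorem~\ref{Tri1} gives that $\A_\Gamma$ is not quasi-projective, a contradiction. Hence every triangle through $\{u,v\}$ has the shape $T(2\ell,2,2)$, i.e. every other vertex is joined to both $u$ and $v$ by label-$2$ edges. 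But then $\Gamma=\Gamma'*_2 \overline{K}_1$ where $\overline{K}_1=\{$some such vertex$\}$ — wait, more carefully: the set $V\setminus\{u,v\}$ is joined to $u$ and $v$ with label $2$, so consider the decomposition of $\Gamma$ as a $2$-join of $S_{2\ell}$ (on $\{u,v\}$) with the induced subgraph on $V\setminus\{u,v\}$. One must check the connecting edges all have label $2$: the edges from $u$ or $v$ to $V\setminus\{u,v\}$ all have label $2$ by the previous paragraph, so this is indeed a $2$-join, and both pieces lie in $\G$ (the segment by Theorem~\ref{BP}, and the other piece because $\Gamma\in\G$ and quasi-projectivity passes through the $2$-join — here one uses that $\A_\Gamma = \A_{S_{2\ell}}\times \A_{\Gamma''}$ and a direct factor of a... ) — this last point is the delicate spot.

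\textbf{Main obstacle.} The subtle step is deducing that the ``other piece'' of the $2$-join is itself in $\G$: Proposition~\ref{prop:QP} gives that a product of quasi-projective groups is quasi-projective, but not the converse (a factor of a quasi-projective group need not be quasi-projective in general). However, in this even-Artin setting, the needed direction should follow because a co-cyclic subgroup computation localizes to the relevant sub-piece — more precisely, one can instead argue directly: \emph{if} some label $2\ell\geq 6$ appears and the graph is not just $T(2\ell,2,2)$ or a $2$-join over $S_{2\ell}$ of quasi-projective pieces, then there is a third vertex realizing a triangle to which Theorem~\ref{Tri1} applies with $k\geq 2$, contradicting quasi-projectivity. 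So the cleanest route avoids the converse-of-product issue entirely: assume $\Gamma$ is $\QP$-irreducible with $\geq 3$ vertices and a label $\geq 6$; completeness plus $\QP$-irreducibility (it is not a $2$-join) forces the existence of a triangle $T(2\ell,2k,2m)$ with $\ell\geq 3$ and $\max(k,m)\geq 2$, because if every triangle through the heavy edge were $T(2\ell,2,2)$ the graph would split as the $2$-join $S_{2\ell}*_2(\text{rest})$ and be reducible. Then Theorem~\ref{Tri1} yields non-quasi-projectivity, the contradiction we want. I expect verifying that ``all triangles through the heavy edge $=T(2\ell,2,2)$'' genuinely produces a $2$-join decomposition into $\G$-members — in particular identifying the complementary block as lying in $\G$ — to be the one place needing a careful, though short, argument, most likely handled by invoking that the complementary block's Artin group is a retract/direct factor whose quasi-projectivity is inherited in this specific structured situation, or by bootstrapping from the characterization being built inductively.
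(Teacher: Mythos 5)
Your argument is essentially the paper's own proof: reduce to complete graphs via Theorem~\ref{thm-complete}, use Theorem~\ref{Tri1} to force every edge adjacent to an edge $e$ of label $\geq 6$ to have label $2$, and then use completeness to obtain the $2$-join decomposition $\Gamma=\{e\}*_2\Gamma'$, contradicting $\QP$-irreducibility. The ``delicate spot'' you flag --- verifying that the complementary factor $\Gamma'$ itself lies in $\G$ --- is likewise left implicit in the paper's one-line proof, so it is not a divergence from the source.
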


\begin{proof}
By Theorem~\ref{thm-complete} one can assume the graph $\Gamma$ is complete.
Assume $m_e\geq 6$ for some edge $e\in E$ of $\Gamma$. 
By Theorem~\ref{Tri1} all edges adjacent to $e$ must have a label $2$. 
Since $\Gamma$ is complete $\Gamma=\{e\}*_2 \Gamma'$, where $\Gamma'$ is the 
resulting $v$-subgraph after deleting the vertices of~$e$.
\end{proof}

Note that the 4-graph in Figure~\ref{fig:dibujo1} is the only candidate containing 
$T(2,2,2)$, Figure~\ref{fig:dibujo2} is the only candidate containing $T(4,4,2)$, but no $T(2,2,2)$, 
and Figure~\ref{fig:dibujo3} is the only candidate containing $T(4,4,2)$ but no~$T(2\ell,2,2)$.

We are going to see that these three candidates cannot be $\QP$-irreducible graphs. 

\begin{thm}\label{thm-qirred4}
There are no $\QP$-irreducible even graphs of four vertices.

Moreover, an even graph containing any of the graphs in Figure~\ref{fig-teo5}
as a $v$-subgraph is not quasi-projective.
\end{thm}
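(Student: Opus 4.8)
By the discussion preceding the statement, the only candidates for a $\QP$-irreducible even graph on four vertices are the three complete even graphs $\Gamma_1$ of Figure~\ref{fig-teo5}: indeed such a graph is complete by Theorem~\ref{thm-complete}, all of its triangular $v$-subgraphs are of type $T(2\ell,2,2)$ or $T(4,4,2)$ by Corollary~\ref{cor-qp} and Theorems~\ref{Tri1} and~\ref{Tri2}, and all of its labels lie in $\{2,4\}$ by the preceding lemma. Hence it suffices to prove that $\A_{\Gamma_1}$ is not quasi-projective for each of these three graphs; and since all three are even, the ``moreover'' part of the statement then follows at once from the ``moreover'' part of Lemma~\ref{subgrafo}, whose parity hypothesis on the connecting labels is automatic in the even setting. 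Thus the whole theorem reduces to checking the hypotheses of Lemma~\ref{subgrafo} for the three graphs of Figure~\ref{fig-teo5}.

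The plan for each $\Gamma_1$ mimics the proofs of Theorems~\ref{Tri1} and~\ref{Tri2}. I take $u$ to be the top vertex of the corresponding picture and $k=2$, so that every edge $\{u,w\}$ of $\Gamma_1$ has $\ell_e\in\{1,2\}$, with $\ell_e\equiv 0\bmod 2$ precisely when the label of $\{u,w\}$ is $4$; this puts Lemma~\ref{AnPA1} at our disposal. Using Lemma~\ref{lem-Muk} one writes down the small Alexander matrix $M_{\Gamma_1,u,2}$, with its explicit block decomposition into the Artin blocks $A_j$, the mixed blocks $(A'_j \mid A_j)$ and the $B$-blocks $M_{B(w,\bar u)}$; by Lemma~\ref{rankAM} it has corank at most $1$. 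One then exhibits two ideals $\hat I_1,\hat I_2\subset\hat\Lambda_2$, each of them generated by a single ``monomial'' relation $1-t_{\bar u}\bar t_{w}$ --- which makes $\rank(M_{B(w,\bar u)}|_{\hat I_i})=1$ by Lemma~\ref{AnPA1} --- together with a few coordinate relations $t_{*,*}-1$ and, whenever a label-$4$ edge not incident to $u$ is available, one quadratic factor $1+t_at_b$ coming from Lemma~\ref{lemma-foxA}. They are arranged so that: condition~\ref{subgrafo-prop-1} holds, i.e.\ $Z(\hat I_i)\subset V_{r_i}$ with $r_i\ge 1$, because the prescribed vanishings together with Lemma~\ref{AnPA1} force $\corank(M_{\Gamma_1,u,2}|_{\hat I_i})\ge 2$; condition~\ref{subgrafo-prop-2} holds, i.e.\ the corank jumps strictly further under $\hat I_1+\hat I_2$, because that ideal annihilates one more row, independent of the others; and condition~\ref{subgrafo-prop-3} holds, i.e.\ $\dim Z(\hat I_1+\hat I_2)\ge 1$, because $t_{\bar u}$ or a surviving monomial relation is left unconstrained. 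For the graph of Figure~\ref{fig:dibujo3}, all of whose triangles are $T(4,4,2)$ and which therefore has no label-$2$ edge off $u$ to exploit, I expect that one instead chooses the $\hat I_i$ to be prime --- each $Z(\hat I_i)$ being, up to a torsion translation, a coordinate subtorus --- and invokes the alternative~\ref{subgrafo-prop-2b} in Lemma~\ref{subgrafo}. With such ideals in hand, Lemma~\ref{subgrafo} gives that $\A_{\Gamma_1}$ is not quasi-projective, and its ``moreover'' clause yields the second assertion of the theorem.

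The main obstacle is the bookkeeping in the rank computations. For each of the three graphs one has to choose $\hat I_1,\hat I_2$ so that conditions~\ref{subgrafo-prop-1} and~\ref{subgrafo-prop-3}, together with one of~\ref{subgrafo-prop-2} or~\ref{subgrafo-prop-2b}, hold simultaneously, and --- exactly as in the proof of Theorem~\ref{Tri1} --- the delicate balance is to keep the ``extra'' row that vanishes only under $\hat I_1+\hat I_2$ genuinely independent of the others (so that the inequality of~\ref{subgrafo-prop-2} is strict) while still leaving $Z(\hat I_1+\hat I_2)$ positive-dimensional. When these two requirements compete, as I expect for the graph of Figure~\ref{fig:dibujo3}, one falls back on the primality alternative~\ref{subgrafo-prop-2b}. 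Beyond this finite and explicit verification over three graphs, no new idea is needed over the techniques already used in Theorems~\ref{Tri1} and~\ref{Tri2} and Lemma~\ref{AnPA1}.
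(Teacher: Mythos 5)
Your reduction to the three graphs of Figure~\ref{fig-teo5} and your overall strategy --- pass to the index-$2$ co-cyclic subgroup $\A_{\Gamma_1,u,2}$ at the top vertex, write down $M_{\Gamma_1,u,2}$ via Lemma~\ref{lem-Muk}, bound its corank by Lemma~\ref{rankAM}, and feed two ideals satisfying \ref{subgrafo-prop-1}--\ref{subgrafo-prop-3} into Lemma~\ref{subgrafo} --- is exactly the route the paper takes, including the observation that the \emph{moreover} clause is automatic in the even setting. The problem is that this is where your argument stops: you never actually exhibit the ideals $\hat I_1,\hat I_2$ for any of the three graphs, nor carry out a single corank computation. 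For a theorem whose entire content is a finite explicit verification, the choice of ideals and the rank bookkeeping \emph{are} the proof, and your text explicitly defers them (``I expect that one instead chooses\dots'', ``the main obstacle is the bookkeeping in the rank computations''). As written, the proposal establishes the framing but not the result.

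For comparison with what the paper actually does: in all three cases the ideals chosen are prime, so the dichotomy you set up between \ref{subgrafo-prop-2} and \ref{subgrafo-prop-2b} is not how the cases split. For graph~\ref{fig:dibujo1} the paper takes two pure coordinate ideals, e.g.\ $I_1=(t_{\bar u}-1,t_{1,1}-1,t_{2,0}-1,t_{2,1}-1,t_{3,0}-1)$ --- not ideals containing a generator $1-t_{\bar u}\bar t_w$ as you predict, although such relations do hold on the zero sets --- and verifies $\corank(M|_{I_i})=2$, $\corank(M|_{I_1+I_2})=4$, $\dim Z(I_1+I_2)=1$ directly. For graphs~\ref{fig:dibujo2} and~\ref{fig:dibujo3} it takes coordinate ideals augmented by one factor of the form $1+t_at_b$ coming from a label-$4$ edge (and note that graph~\ref{fig:dibujo3} does have a label-$2$ edge off $u$, namely $\{w_1,w_2\}$, contrary to your parenthetical). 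So your predicted shapes are in the right spirit but do not match the paper's choices, and without the explicit ideals and the verification that the ``extra'' vanishing row under $\hat I_1+\hat I_2$ is genuinely independent, the argument has a real gap rather than a merely expository one.
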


\begin{proof}
As discussed above, one only needs to rule out the list of graphs shown in~\ref{fig-teo5}.
We will do this separately and using similar arguments. For this reason we will show only 
case~\ref{fig:dibujo1} in detail.
\begin{itemize}
 \item For graph~\ref{fig:dibujo1}
let us consider the index 2 subgroup $\A_{\Gamma,u,2}\subset \A_\Gamma$. 
Its Alexander matrix is given as follows:

\vspace{0.5 cm}

\resizebox{\linewidth}{!}{
$\displaystyle
M_{\Gamma,u,2}=\begin{blockarray}{ccccccc}
  w_{1,0}& w_{2,0} & w_{3,0} & w_{1,1}  &w_{2,1} & w_{3,1} & \bar u   \\
\begin{block}{( ccccccc)}
 1-t_{2,0} & t_{1,0}-1 & 0  & 0 & 0 & 0 & 0  \\
 1-t_{3,0} & 0 & t_{1,0}-1 & 0 & 0 & 0 & 0 \\
 0 & 1-t_{3,0} & t_{2,0}-1 & 0 & 0 & 0 & 0  \\
 0 & 0 & 0 & 1-t_{2,1} & t_{1,1}-1 & 0 & 0 \\
0 & 0 & 0 & 1-t_{3,1} & 0 & t_{1,1}-1 & 0  \\
 0 & 0 & 0 & 0& 1-t_{3,1} & t_{2,1}-1 & 0  \\
 t_{\bar u}-1 & 0 & 0 & t_{1,0}(t_{\bar u}-1) & 0 & 0 & 1-t_{1,0}t_{1,1}  \\
 1-t_{1,1}t_{\bar u} & 0 & 0 & 1-t_{1,0} & 0 & 0 & t_{1,1}(t_{1,0}-1) \\
 0 & t_{\bar u}-1 & 0 & 0 & t_{2,0}(t_{\bar u}-1) & 0 & 1-t_{2,0}t_{2,1} \\
 0 & 1-t_{2,1}t_{\bar u} & 0 & 0 & t_{2,0}-1 & 0 & t_{2,1}(t_{2,0}-1)  \\
 0 & 0 & t_{\bar u}-1 & 0 & 0 & t_{3,0}(t_{\bar u}-1) & 1-t_{3,0}t_{3,1}  \\
 0 & 0 & 1-t_{3,1}t_{\bar u} & 0 & 0 & t_{3,0}-1 & t_{3,1}(t_{3,0}-1)  \\
\end{block}	
\end{blockarray}
$
}

\vspace{0.5 cm}
where $t_{i,j}$ denotes $t_{w_{i,j}}$. 
We now consider the following prime ideals:

\begin{align*}
& I_{1}=(t_{\bar u}-1,t_{1,1}-1,t_{2,0}-1,t_{2,1}-1, t_{3,0}-1)\\
& I_{2}=(t_{\bar u}-1,t_{1,0}-1,t_{1,1}-1,t_{2,1}-1,t_{3,0}-1).\\
\end{align*}
Note that $\corank (M_{\Gamma,u,2}|_{I_1})=\corank (M_{\Gamma,u,2}|_{I_2})=2$,
$\corank (M_{\Gamma,u,2}|_{I_1+I_2})=4$ and 
$$
Z(I_1+I_2)=\{(t_{\bar u},t_{1,0},t_{1,1},t_{2,0},t_{2,1},t_{3,0},t_{3,1})=
(1,1,1,1,1,1,\lambda)\mid \lambda\in \CC^*\}\subset (\CC^*)^7.
$$
The result follows from Lemma~\ref{subgrafo} and the fact that $\dim Z(I_1+I_2)= 1$.

 \item For graph~\ref{fig:dibujo2} consider $\A_{\Gamma,u,2}\subset \A_\Gamma$ and the prime ideals 
$$I_{1}=(t_v-1,t_{\bar u}-1,t_{1,1}-1,t_{2,0}-1), \quad I_{2}=(t_{\bar u}-1,t_{1,1}-1,t_{2,0}-1,1+t_{1,0}t_v).$$
 
 \item For graph~\ref{fig:dibujo3} the result follows considering the subgroup 
$\A_{\Gamma,u,2}\subset \A_\Gamma$ and the prime ideals
$$I_{1}=(t_{\bar u}-1,t_{1,0}-1,t_{2,1}-1,1+t_{1,1}t_v),\quad I_{2}=(t_{\bar u}-1,t_{1,0}-1,t_{2,1}-1,1+t_{2,0}t_v).$$
\end{itemize}
\end{proof}  
 
\subsection{$\text{QP}$-irreducible even graphs with more than 4 vertices}
As a consequence of the results obtained in the previous sections, 
no quasi-projective even Artin group can contain any of the following
\begin{enumerate}
\item\label{cond-1} 
A vertex with two edges with labels $2r$, $r\geq 3$ and $2k$, $k\geq 2$ 
--\,see Theorems~\ref{Tri1} and \ref{thm-complete}.
\item\label{cond-2} A triangle $T(4,4,4)$ --\,see Theorem~\ref{Tri2}.
\item\label{cond-3} A three-edge tree of labels $4,4,4$ 
--\,see Theorems~\ref{thm-complete},~\ref{Tri2}, and \ref{thm-qirred4}.
\end{enumerate}

\begin{thm}
\label{thm-qp2}
There are no $\QP$-irreducible even graphs with more than $3$ vertices.
\end{thm}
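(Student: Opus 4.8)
The plan is to show that any $\QP$-irreducible even graph $\Gamma$ with at least four vertices leads to a contradiction, by exhibiting inside $\Gamma$ one of the forbidden configurations~\eqref{cond-1}, \eqref{cond-2}, or~\eqref{cond-3}, or else by splitting $\Gamma$ as a nontrivial $2$-join. First I would recall that by Theorem~\ref{thm-complete} we may assume $\Gamma$ is complete (a non-complete strictly even graph on $\geq 3$ vertices is not quasi-projective, hence not a $\QP$-graph at all, let alone $\QP$-irreducible; and a $\QP$-irreducible graph other than a point is strictly even). So $\Gamma$ is a complete even-labeled graph on $n\geq 4$ vertices. By the Lemma preceding Theorem~\ref{thm-qirred4}, every label is $2$ or $4$: any edge of label $\geq 6$ forces, via Theorem~\ref{Tri1}, all adjacent edges to have label $2$, and completeness then gives a $2$-join splitting off that edge, contradicting $\QP$-irreducibility. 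Likewise, condition~\eqref{cond-1} (read through Theorem~\ref{Tri1} together with $v$-supergraph monotonicity) rules out a vertex incident to two edges of label $\geq 4$ unless those edges carry forbidden labels; combined with the label-$\leq 4$ reduction, the upshot is that \emph{each vertex is incident to at most one edge of label~$4$}.

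The combinatorial heart of the argument is then the following: in a complete graph on $n\geq 4$ vertices in which every vertex meets at most one $4$-labeled edge, the $4$-labeled edges form a partial matching. Let $M$ be the set of $4$-edges. If $M=\varnothing$, then $\Gamma$ is right-angled complete, so $\A_\Gamma\cong\ZZ^n$ is a $2$-join of $n$ points, hence not $\QP$-irreducible. If $|M|\geq 2$, pick two disjoint $4$-edges $e=\{a,b\}$ and $f=\{c,d\}$; since $\Gamma$ is complete and every other edge among $\{a,b,c,d\}$ has label $2$, the induced $v$-subgraph on these four vertices is exactly graph~\ref{fig:dibujo3} of Figure~\ref{fig-teo5} (two $4$-edges, all four connecting edges of label $2$), which by Theorem~\ref{thm-qirred4} is not quasi-projective; by the \emph{moreover} part of Lemma~\ref{subgrafo} (the relevant connecting labels are even) $\A_\Gamma$ is not quasi-projective. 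Finally, if $|M|=1$, say $M=\{\{a,b\}\}$, then every vertex $v\notin\{a,b\}$ is joined to everything by label-$2$ edges, so $\Gamma=\Gamma'*_2\overline{K}_1$ where $\Gamma'$ is obtained by deleting $v$; this exhibits $\Gamma$ as a nontrivial $2$-join of $\QP$-graphs (any single vertex, and $\Gamma'$, which is smaller and complete with the same matching structure, hence a $2$-join of the pieces $\overline{K}_r$, $S_{2\ell}$, $T$ by downward induction, or directly $S_4 *_2 \overline{K}_{n-2}$), contradicting $\QP$-irreducibility.

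I would organize the write-up as: (1) reduce to complete $\Gamma$; (2) reduce all labels to $\{2,4\}$ via the label lemma; (3) reduce to "$4$-edges form a matching" via condition~\eqref{cond-1}; (4) case split on $|M|\in\{0\}$, $\{1\}$, $\{\geq 2\}$, using $2$-join splitting in the first two and Theorem~\ref{thm-qirred4} plus Lemma~\ref{subgrafo} in the last. The main obstacle is step~(3): one has to be careful that Theorem~\ref{Tri1} is stated for $v$-supergraphs of $T(2r,2k,2\ell)$ with $r\geq 3$, $k\geq 2$, so a vertex $u$ incident to two $4$-edges sits in a triangle $T(4,4,2\ell)$ which is a $v$-supergraph of $T(4,4,2)$ only — not directly covered by Theorem~\ref{Tri1} — so for $|M|\geq 2$ disjoint edges one must instead route through the four-vertex graph~\ref{fig:dibujo3} and Theorem~\ref{thm-qirred4}, while the case of two $4$-edges sharing a vertex is what condition~\eqref{cond-3} (a three-edge path of labels $4,4,4$) or a direct application of Theorems~\ref{Tri1}/\ref{Tri2} eliminates. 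Keeping these sub-cases straight, and confirming that in every surviving configuration completeness yields an honest $2$-join, is the delicate bookkeeping; everything else is immediate from the quoted results.
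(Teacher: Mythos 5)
Your reduction to a complete graph with all labels in $\{2,4\}$ matches the paper, but the combinatorial core of your argument fails in two places, and as a result you miss the one decomposition that actually occurs. The claim that each vertex meets at most one $4$-labeled edge does not follow from condition~\eqref{cond-1}: that condition (via Theorem~\ref{Tri1}) requires one of the two labels to be $2r$ with $r\geq 3$, so a vertex incident to two $4$-edges is \emph{not} forbidden --- it is precisely the configuration inside $T(4,4,2)$, which is quasi-projective and $\QP$-irreducible, so no obstruction can eliminate it. You half-notice this at the end, but then assert that two $4$-edges sharing a vertex are ``eliminated'' by condition~\eqref{cond-3} or by Theorems~\ref{Tri1}/\ref{Tri2}; none of these applies (\eqref{cond-3} needs a \emph{three}-edge tree of $4$'s, Theorem~\ref{Tri1} needs a label at least $6$, Theorem~\ref{Tri2} needs $T(4,4,4)$). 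Hence the $4$-edges need not form a matching. The paper's proof handles exactly this case: two adjacent $4$-edges close up, by \eqref{cond-1} and \eqref{cond-2}, into a triangle $T(4,4,2)$, and \eqref{cond-3} then forbids any further adjacent $4$-edge, giving $\Gamma=T(4,4,2)*_2\hat\Gamma$. That decomposition is entirely absent from your trichotomy on $|M|$.

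Your treatment of $|M|\geq 2$ is also incorrect. Two disjoint $4$-edges whose four connecting edges all carry label $2$ form the graph $S_4*_2S_4$, which is a $2$-join of quasi-projective graphs and hence quasi-projective; it is \emph{not} graph~\ref{fig:dibujo3} of Figure~\ref{fig-teo5}, which has four $4$-labeled edges arranged in a $4$-cycle with two label-$2$ diagonals. So Theorem~\ref{thm-qirred4} does not apply, and your intermediate conclusion that $\A_\Gamma$ fails to be quasi-projective whenever $|M|\geq 2$ is false. The correct conclusion in that situation is only that $\Gamma$ splits as a $2$-join (as in your $|M|=1$ case), not that it violates Proposition~\ref{Fin}. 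Once both repairs are made --- routing adjacent pairs of $4$-edges into the $T(4,4,2)*_2\hat\Gamma$ splitting and moving $|M|\geq 2$ from the obstruction side to the splitting side --- the argument collapses to the paper's: every $4$-edge of a complete graph with more than four vertices splits off either as $\{e\}*_2\hat\Gamma$ or as $T(4,4,2)*_2\hat\Gamma$.
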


\begin{proof}
The result follows for graphs with four vertices by the previous section.

For any $\QP$-irreducible even graph $\Gamma$ with more than four vertices 
note the following: 
\begin{itemize}
 \item $\Gamma$ must be complete by Theorem~\ref{thm-complete}.
 \item If $\Gamma$ contains an edge $e$ with label $m_e=2r$, $r\geq 3$, then 
by~\eqref{cond-1} above, $\Gamma=\{e\}*_2 \hat \Gamma$ and hence $\Gamma$ is not
$\QP$-irreducible.
 \item If $\Gamma$ contains an edge $e$ with label $m_e=4$, then either 
$\Gamma=\{e\}*_2 \hat \Gamma$ (see~\eqref{cond-2} above) or 
$\Gamma=T(4,4,2) *_2 \hat \Gamma$ (see~\eqref{cond-3}).
\end{itemize}
\end{proof}

\section{Proofs of Main Theorems}
\label{sec-proofs}

\subsection{Proof of Theorem~\ref{thm-qp}}
As a consequence of Theorems~\ref{Suciu2} and \ref{BP} graphs of type $\overline K_{r}$, $S_{2\ell}$, and $T(4,4,2)$
as in Figure~\ref{fig-simple-blocks2} are $\QP$-irreducible. Moreover, by Corollary~\ref{cor-qp} and Theorem~\ref{thm-qp2}
these are the only ones. Using~\eqref{eq-2join} and Proposition~\ref{prop:QP}\eqref{QP2J} any 2-join $\QP$-irreducible graphs is 
quasi-projective. This completes the first part of the proof.

For the \emph{moreover} part it is enough to check that the product of two fundamental groups of curve complements
is also the fundamental group of a curve complement. This is a consequence of the following result.

\begin{thm}\label{Oka}\cite{Oka}
Let $C_1$ and $C_2$ be plane algebraic curves in $\mathbb{C}^{2}$. 
Assume that the intersection $C_1 \cap C_2$ consist of distinct $d_1d_2$ points where $d_i$ ($i=1,2$) 
are respective degrees of $C_1$ and $C_2$. Then the fundamental group $\pi_1(\mathbb{C}^2\setminus (C_1\cup C_2))$ 
is isomorphic to the product of $\pi_1(\mathbb{C}^2\setminus C_1)$ and $\pi_1(\mathbb{C}^2\setminus C_2)$.
\end{thm}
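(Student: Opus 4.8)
The plan is to compute both sides by the Zariski--van Kampen method with respect to one common generic pencil of lines and to compare the resulting presentations. Write $C_i=\{f_i=0\}$ with $\deg f_i=d_i$. The first step is to unpack the hypothesis geometrically: since $C_1$ and $C_2$ have exactly $d_1d_2$ points in common they share no irreducible component, so B\'ezout in $\PP^2$ gives $\sum_{p\in\overline{C_1}\cap\overline{C_2}}I_p(\overline{C_1},\overline{C_2})=d_1d_2$; as the $d_1d_2$ affine points already saturate this count, every intersection point is a transverse node lying in $\CC^2$ and $\overline{C_1}\cap\overline{C_2}\cap L_\infty=\emptyset$. Hence $C_1\cup C_2$ is a reduced curve whose singular locus is $\Sing C_1\cup\Sing C_2$ together with the $d_1d_2$ new nodes $p_1,\dots,p_{d_1d_2}$.

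Next I would fix a linear projection $\rho\colon\CC^2\to\CC$ generic for $C_1\cup C_2$: its generic fibre is a line meeting $C_1$ transversally in $d_1$ points and $C_2$ transversally in $d_2$ points, and the $\rho$-images of the finitely many bad fibres (those tangent to a smooth branch, or through a point of $\Sing C_1\cup\Sing C_2$, or through one of the nodes $p_k$) are pairwise distinct, each bad fibre carrying exactly one such event. Zariski--van Kampen then presents $\pi_1(\CC^2\setminus(C_1\cup C_2))$ on the $d_1+d_2$ meridians $a_1,\dots,a_{d_1}$ of the $C_1$-points and $b_1,\dots,b_{d_2}$ of the $C_2$-points of a fixed generic fibre, with one relation $\beta_*(x)=x$ for each such meridian $x$ and each braid-monodromy generator $\beta$ attached to a bad fibre. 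Applying the same projection to $C_1$ alone and to $C_2$ alone presents $\pi_1(\CC^2\setminus C_1)$ on the $a_i$ and $\pi_1(\CC^2\setminus C_2)$ on the $b_j$; the point of using one projection throughout is that the bad fibres of $C_1$ (resp.\ $C_2$) form a sub-collection of those of $C_1\cup C_2$ and contribute the same braids on the $a$-strands (resp.\ $b$-strands).

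The core step is to sort the bad fibres of $C_1\cup C_2$ into three families. A fibre tangent to $C_1$ or through a point of $\Sing C_1$ yields a braid that moves only the $C_1$-strands, hence a relation among the $a_i$ alone, and collecting these reproduces exactly the van Kampen relations of $\pi_1(\CC^2\setminus C_1)$; symmetrically for $C_2$ and the $b_j$. A fibre through a node $p_k$ collides one $C_1$-strand with one $C_2$-strand, and the local monodromy of a node is a single full twist on those two strands, which by the standard node computation imposes a commutation relation $[a_i,b_j]=1$ between the two colliding meridians. The delicate point is to check that, as $k$ runs over the $d_1d_2$ nodes, these relations \emph{together with the already-available relations of $\pi_1(\CC^2\setminus C_1)$ and $\pi_1(\CC^2\setminus C_2)$} force $[a_i,b_j]=1$ for \emph{all} pairs $(i,j)$: restricting to a pair of irreducible components $C_1^{(r)},C_2^{(s)}$, their $\deg C_1^{(r)}\cdot\deg C_2^{(s)}$ common nodes, distributed among the strands by the transitive braid monodromy of each component, produce enough commutators to conjugate one into all the others, using that meridians of a common component are already conjugate. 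Granting this, the presentation reads $\langle a_i,b_j\mid R_1(a),\,R_2(b),\,[a_i,b_j]=1\ \forall i,j\rangle$ with $R_1,R_2$ presenting $\pi_1(\CC^2\setminus C_1),\pi_1(\CC^2\setminus C_2)$, which is precisely a presentation of the direct product; one then checks the abstract isomorphism is the map induced by the inclusions $\CC^2\setminus(C_1\cup C_2)\hookrightarrow\CC^2\setminus C_i$, since a $C_1$-meridian dies in $\CC^2\setminus C_2$ and conversely.

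I expect the real obstacle to be exactly this node bookkeeping: arranging the genericity of $\rho$ and the connecting paths so that each node contributes a clean commutator between two of the chosen generators, and that the $d_1d_2$ such commutators suffice, component-pair by component-pair, to yield all of them. A partial shortcut worth setting up first is surjectivity of $\pi_1(\CC^2\setminus(C_1\cup C_2))\to\pi_1(\CC^2\setminus C_1)\times\pi_1(\CC^2\setminus C_2)$: each $C_1$-meridian maps to $(\ast,1)$ and each $C_2$-meridian to $(1,\ast)$, and since meridians generate each factor (as $\CC^2$ is simply connected), both coordinate subgroups lie in the image; this reduces the theorem to the injectivity statement, for which the explicit van Kampen presentation above is still the essential input.
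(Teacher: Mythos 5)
The paper does not prove this statement at all: it is quoted verbatim from Oka--Sakamoto (the cited reference \cite{Oka}), so there is no internal proof to measure your attempt against. Your outline does follow the same strategy as the original Oka--Sakamoto argument --- a generic affine pencil of lines, the Zariski--van Kampen presentation, and the observation that B\'ezout forces all $d_1d_2$ intersections to be transverse affine nodes, each contributing a commutation relation. The surjectivity reduction at the end is also fine.

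The genuine gap is the step you yourself flag and then wave through: deducing $[a_i,b_j]=1$ for \emph{all} pairs from the $d_1d_2$ node relations. Two problems compound here. First, the relation produced at a node is not $[a_i,b_j]=1$ between two of the chosen generators but $[wa_iw^{-1},w'b_jw'^{-1}]=1$ for connecting words $w,w'$ depending on the paths, and no choice of genericity makes all $d_1d_2$ of them simultaneously ``clean.'' Second, your proposed fix --- ``meridians of a common component are already conjugate, so conjugate one commutator into the others'' --- is circular: knowing $a_i=ga_{i'}g^{-1}$ and $[a_{i'},b_j]=1$ yields $[a_i,b_j]=1$ only if the conjugating element $g$ already commutes with $b_j$, which is (a consequence of) what you are trying to prove. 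Breaking this circularity is exactly the content of the group-theoretic lemma at the heart of Oka--Sakamoto's paper, where one runs a careful induction on the braid monodromy action using that the number of nodes is \emph{maximal} (one full twist for every pair of local branches, so every strand of $C_1$ collides with every strand of $C_2$ over the course of the monodromy). Without that lemma, or a substitute such as Nori's weight arguments or a Mayer--Vietoris/fibration argument, the presentation you write down is not yet known to collapse to the direct product. So the proposal correctly locates the difficulty but does not resolve it.
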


\subsection{Proof of Theorem~\ref{thm-kpi1}}
\label{sec-proofkpi1}
Since the product of two $K(\pi,1)$ spaces is also a $K(\pi,1)$ space it is enough to prove the result
for the $\QP$-irreducible even graphs $\overline{K}_r$, $S_{2\ell}$, and $T(4,4,2)$. 
The graph $\overline{K}_r$ is associated with the free 
group $\FF_r$ of rank $r$, which can be realized as the fundamental group of the complement to
$r$ points in $\CC$, which is an Eilenberg-MacLane space.

The group $\A_{S_{2\ell}}$ associated with the segment graph $S_{2\ell}$ is the fundamental group of the complement 
$X$ to the affine curve $\{y-x^\ell\}\cup \{y=0\}$. In projective coordinates $X$ can be seen as the complement
to the projective curve $\cC=\{yz(yz^{\ell-1}-x^\ell)=0\}\subset \PP^2$, that is, $X=\PP^2\setminus \cC$. 
Consider the projection $\pi:\PP^2\setminus \{[1:0:0]\}\to \PP^1$, defined by $[x:y:z]\mapsto [y:z]$. 
Note that $\pi|_X:\to \PP^1\setminus \{[0:1],[1:0]\}$ is well defined, locally trivial fibration and moreover, 
the fiber at each point $[y:z]$ is homeomorphic to $\CC\setminus \{\ell \text{ points}\}$. Thus $X$ is also
an Eilenberg-MacLane space.

Finally, the triangle Artin group $\A_T$ associated with the triangle $T=T(4,4,2)$ is the fundamental group of the 
complement $X$ to the affine curve $\{y-x^2\}\cup \{2x-y-1=0\}\cup \{2x+y+1=0\}$ \cite[Example 5.11]{ACM-artin}. 
Using the identification $\CC^2=\PP^2\setminus \{z=0\}$ we can think of $X$ as the complement of a smooth conic and 
three tangent lines in the complex projective plane. After an appropriate change of coordinates, $X$ can be given as 
$\PP^2\setminus \cC$, where $\cC=\{F(x,y,x)=xyz(x^2+y^2+z^2-2(xy+xz+yz))=0\}\subset \PP^2$. 

\begin{center}
\begin{figure}[ht]
\begin{tikzpicture}[scale=.6]
\draw[very thick] (0,-0.05) circle (2cm);
\draw[very thick] (-4.5,-2.7) -- (-4,-2.05) -- (0,3.4) -- (.5,4.09);
\draw[very thick] (-4.5,-2.05) -- (4.5,-2.05);
\draw[very thick] (4.5,-2.7) -- (4,-2.05) -- (0,3.4) -- (-.5,4.09);
\end{tikzpicture}
\caption{Projective curve $\cC=\{F=0\}$}
\label{fig-ceva}
\end{figure}
\end{center}

We will consider a 4-fold cover $X_4$ of $X$.
Since the higher homotopy groups of $X$ and $X_4$ are isomorphic, it is enough to show that $X_4$ is an 
Eilenberg-MacLane space.
Consider the Kummer morphism $\kappa:\PP^2\to \PP^2$ defined by $\kappa([x:y:z])=[x^2:y^2:z^2]$.
Note that $\kappa$ is a 4:1 ramified cover and its ramification locus is $R=\{xyz=0\}$. Since $R\subset \cC$
$\kappa$ defines an unramified cover on $X_4=\kappa^{-1}(X)$. Moreover, the preimage of $\cC$ by $\kappa$ is a product 
of 7 lines, three of which are the axis $xyz=0$ and four of them are the preimage of the conic $(x^2+y^2+z^2-2(xy+xz+yz))=0$.
In particular
$$
\array{rl}
\cC_2=\kappa^{-1}(\cC)=&\{F(x^2,y^2,z^2)=0\}\\
=&\{xyz(x+y+z)(x+y-z)(x-y+z)(x-y-z)=0\}.
\endarray
$$
Geometrically this corresponds to a Ceva arrangement --\,formed by the six lines of a generic pencil of conics\,--
with an extra line passing through two out of the three double points. In our equations, the pencil of conics
can be defined as $F_{[\alpha:\beta]}=\alpha ((x+z)^2-y^2)-\beta ((x-z)^2-y^2)$. Note that for $\alpha=\beta=1$
one obtains $F_{[1:1]}=4xz$. The rational map $\pi:\PP^2\dashrightarrow \PP^1$ defined by the pencil, where 
$\pi^{-1}[\alpha:\beta]=\{F_{[\alpha:\beta]}=0\}$, that is, $\pi([x:y:z])=[(x-z)^2-y^2:(x+z)^2-y^2]$ is not 
defined at the base points of the pencil. Since the curve $\cC_2$ contains these base points, one obtains that
$\pi|_{X_4}$ is well defined, where $X_4=\kappa^{-1}(X)=\PP^2\setminus \cC_2$. 

After our discussion above, recall that the special fibers of $\pi$ are the six lines 
$\{xz(x+y+z)(x+y-z)(x-y+z)(x-y-z)=0\}$. Finally, note that the line $y=0$ is a multisection since $\pi|_{y=0}$ 
is defined by $\pi([x:0:z])=[(x-z)^2:(x+z)^2]$ which is 2:1 and ramifies only at $[0:1]$ and $[1:0]$, therefore the map 
$$
\array{rcl}
\pi|_{X_4}: X_4 & \to & \PP^1\setminus \{[0:1],[1:0],[1:1]\} \\
{[x:y:z]} & \mapsto & [(x-z)^2-y^2:(x+z)^2-y^2]
\endarray
$$
is a well-defined locally trivial fibration whose generic fiber is the smooth conic of the pencil with six points 
removed (the four base points and the two points of intersection with the multisection $\{y=0\}$). Therefore 
$X_4$ is an Eilenberg-MacLane space.

\section{An example}
To end this paper we take a closer look into the triangle Artin group $\A_T$, $T=T(4,4,2)$ given by geometrical methods
coming from its quasi-projectivity property.

First we will show that $\A_T$ is not an extension of free groups. To do so we first study the surjections of $\A_T$ onto
a free group $\FF_r$ of rank $r$. Any surjection of groups $G_1\surj G_2$ induces an injection of characteristic varieties
$V_i(G_2)\hookrightarrow V_i(G_1)$ via the change of base $*\otimes_{\CC[G_2/G'_2]}\CC[G_1/G'_1]$ that turns an ideal in 
$\CC[G_2/G'_2]$ to an ideal in $\CC[G_1/G'_1]$ (see~\cite{Libgober-characteristic}). An Alexander matrix of $\A_T$ can be 
obtained immediately from Lemma~\ref{lemma-foxA} and~\eqref{eq-row} as
$$
M_{\A_T}=
\newcommand{\Bold}[1]{\mathbf{#1}}\left(\begin{array}{rrr}
-{\left(t_{0} t_{1} + 1\right)} {\left(t_{1} - 1\right)} & {\left(t_{0} t_{1} + 1\right)} {\left(t_{0} - 1\right)} & 0 \\
-{\left(t_{0} t_{2} + 1\right)} {\left(t_{2} - 1\right)} & 0 & {\left(t_{0} t_{2} + 1\right)} {\left(t_{0} - 1\right)} \\
0 & -t_{2} + 1 & t_{1} - 1
\end{array}\right)
$$
and thus its characteristic variety $V_1(\A_T)=\TT_1\cup \TT_2\cup \TT_3$ is the zero set of the Fitting 
ideal generated by the $2\times 2$-minors of $M_{\A_T}$, where 
$$
\array{c}
\TT_1=\{(-t^{-1},t,1)\mid t\in \CC^*\}\subset (\CC^*)^3,\\
\TT_2=\{(-t^{-1},1,t)\mid t\in \CC^*\}\subset (\CC^*)^3,\\
\TT_3=\{(-t^{-1},t,t)\mid t\in \CC^*\}\subset (\CC^*)^3\\
\endarray
$$
are three one-dimensional complex tori in $(\CC^*)^3$. Since the characteristic variety of the free group 
$\FF_r$ has dimension $r$, this implies that the only possible surjection $\A_T\surj \FF_r$ is restricted to $r=1$.

Note that any short exact sequence
$$
1\to \FF_s\to \A_T\to \ZZ\to 0
$$
splits and the action of $\ZZ$ on $\A_T$ is trivial in homology. Therefore $\A_T=\FF_s\rtimes \ZZ$ is called an 
\emph{IA-product of free groups} and by~\cite[Corollary 3.4]{Cohen-Suciu-Homology} the Poincaré polynomial 
$P_{\A_T}(t)$ of $\A_T$ should factor as a product of linear terms in~$\ZZ[t]$. However, since the complement 
$X=\PP^2\setminus \cC$ of the conic an three tangent lines shown in Figure~\ref{fig-ceva} is a $K(\A_T,1)$-space 
it is enough to calculate $P_X(t)$. One can easily check that $h_0(X)=1$ and $h_1(X)=3$. Moreover, using the 
additivity of the Euler characteristic
$$
\array{rcl}
\chi(X) & = &\chi(\PP^2)-\sum \chi(\cC_i)+\# \Sing(\cC)=
3-4\chi(\PP^1)+3=1\\
& = & h_0(X)-h_1(X)+h_2(X)=-2+h_2(X),
\endarray
$$
where $\cC_i$ are the irreducible components of $\cC$ and $\chi(\cC_i)=\chi(\PP^1)=2$ since they are all rational curves.
Therefore $h_2(X)=3$ and thus
$$
P_{\A_T}(t)=P_{X}(t)=3t^2+3t+1
$$
which is not a product of linear factors in~$\ZZ[t]$.

However, as shown in the proof of Theorem~\ref{thm-kpi1}, --\,see section~\ref{sec-proofkpi1}\,-- 
its 4-fold cover $X_4$ is the complement of a line arrangement of fibered type whose fundamental group 
$\pi_1(X_4)$ is a finite index normal subgroup of $\A_T$ which is an IA-free product of free 
groups~$\FF_3\rtimes\FF_3$.

\end{document}